\newtheorem{theorem}{Theorem}[section]
\newtheorem{proposition}[theorem]{Proposition}
\newtheorem{lemma}[theorem]{Lemma}
\newtheorem{corollary}[theorem]{Corollary}
\theoremstyle{definition}
\newtheorem{remark}[theorem]{Remark}
\def\R{\mathbb{R}}
\def\N{\mathbb{N}}
\def\C{\mathbb{C}}
\def\ZZ{\mathbb{Z}}
\def\E{\mathbb{E}}
\def \sgn{\operatorname{sgn}}
\def \supp{\operatorname{supp}}
\def\cA{{\mathcal A}}
\def\cC{{\mathcal C}}
\def\cD{{\mathcal D}}
\def\cF{{\mathcal F}}
\def\cL{{\mathcal L}}
\def\cM{{\mathcal M}}
\def\cR{{\mathcal R}}
\def\cS{{\mathcal S}}
\def\cW{{\mathcal W}}
\def\cO{{\mathcal O}}
\def\cT{{\mathcal T}}
\def\cM{{\mathcal M}}
\def\cB{{\mathcal B}}
\def\cP{{\mathcal P}}
\def\tf{\widetilde{f}}
\def\m{\mathfrak{M}}
\def\T{{\mathbb{T}}}
\newcommand{\ud}{\mathrm{d}}
\newcommand{\X}{{X}}
\newcommand{\A}{{A}}
\newcommand{\B}{{B}}
\newcommand{\widecheck}{\check}
\newcommand{\bracket}[2]{\langle #1 , #2 \rangle}
\definecolor{darkred}{rgb}{0.7,0.1,0.1}
\title[The invariant subspaces of periodic Fourier multipliers]{The invariant subspaces of periodic Fourier multipliers with application to abstract evolution equations}
\author{Sebastian Kr\'ol \& Jarosław Sarnowski}
\address{Sebastian Kr{\'o}l, Faculty of Mathematics and Computer Science, Adam Mickiewicz University in Pozna{\'n}, ul. Uniwersytetu Pozna{\'n}skiego 4, 61-614 Pozna{\'n}, Poland}
\email{sebastian.krol@amu.edu.pl}
\address{Jarosław Sarnowski, Faculty of Mathematics and Computer Science, Nicolaus Copernicus University in Toru{\'n}, ul. Chopina 12/18, 87-100 Toru{\'n}, Poland}
\email{jsarnowski@doktorant.umk.pl}
\begin{document}


\keywords{integro-differential equations, maximal regularity, well-posedness, periodic Fourier multipliers, Hardy-Littlewood maximal operator, Besov spaces, Triebel-Lizorkin spaces, Rubio de Francia iteration algorithm}

\subjclass{42B37, 42A45, 45N05, 46N20, 43A15}  

\begin{abstract}
By methods of harmonic analysis, we identify large classes of Banach spaces invariant of periodic Fourier multipliers with symbols satisfying the classical Marcinkiewicz type conditions. Such classes include general (vector-valued) Banach function spaces $\Phi$ and/or the scales of Besov and Triebel-Lizorkin spaces defined on the basis of $\Phi$.

We apply these results to the study of the well-posedness and maximal regularity property of an abstract second-order integro-differential equation, which  models various types of elliptic and parabolic problems arising in different areas of applied mathematics. 
In particular, under suitable conditions imposed on a convolutor $c$ and the geometry of an underlying Banach space $X$, we characterize the conditions on the operators $A$, $B$ and $P$ on $X$ such that the following periodic problem 
\[
\partial P \partial u + B \partial u + \A u + c \ast u = f \qquad \textrm{in } \cD'(\T; X)
\]
is well-posed with respect to large classes of function spaces. 
The obtained results extend the known theory on the maximal regularity of such problem.  

\end{abstract}

\renewcommand{\subjclassname}{\textup{2020} Mathematics Subject Classification}

\maketitle

\section{Introduction} 

Fourier multipliers with operator-valued symbols have found many applications in the theory of abstract evolution equations, in particular, in connection with solvability (well-posedness) and regularity of integro-differential equations. A large class of such equations can be modelled by the following abstract, degenerated second-order problem with a convolution term: 
\begin{equation}\label{AP}
(P u')' + B u' + A u + c \ast u = f. \tag{AP}\\
\end{equation}
Here, $A$, $B$, $P$ denote closed linear operators on a Banach space $X$ and 
$c$ is an operator-valued function. 

For particular forms of \eqref{AP}, the studies  of their well-posedness on diverse vector-valued function spaces have been increased with occurring two seminal papers by Amann \cite{Am97} and Weis \cite{We01}, where operator-valued counterparts of classical multiplier theorems for Besov and Lebesgue-Bochner spaces on $\R$ are provided. 
Those results indicated a right form of multiplier conditions (see \cite{ClPr01}, \cite{CdPSW00}), which have been further adapted to different situations; see, e.g. periodic multiplier results in \cite{ArBu02, ArBu04, StWe07}, which are relevant to this article.

In the literature one can extract two lines of research corresponding to such studies: namely, when \eqref{AP} is considered in the {\it euclidean} setting, that is, on $\R$ or $\R_+$, and in the {\it periodic} one, that is, on $\T:=\R/\ZZ$ (i.e., when the periodic conditions are imposed). 
Each of these lines is represented by a long series of papers; to mention a few representative results, see for the first one, e.g. \cite{Am97, We01, PrSi04, ArBaBu04, ChSr05, AuAx11a, ChFi14, ApKe20, KeAp20, Kr22}, and for the second one, e.g. \cite{ArBu02, ArBu04, KeLi04, Li06, BuFa08, LiPo08, Po09, BuFa09, KeLiPo09, HeLi12, LiPo13, FuLi14, BuCa17, BuCa18, BuCa19} (as well as the references therein).  
Such studies correspond to the well-known research program formulated by Amman in \cite[Section 3]{Am95} and labelled as {\it 'pairs of maximal regularity'}. 
In both settings, the basic idea for such studies is the same and relies on  multiplier theorems. 
Roughly, by the theory of vector-valued distributions, the well-posedness and regularity questions for \eqref{AP}, reduce to checking if corresponding Fourier multipliers with operator-valued symbols are bounded in a space under consideration. Such Fourier multiplier operators arise naturally via the representation formula for corresponding solution operators associated to a given form of \eqref{AP}.

In the euclidean setting, the so-called phenomenon of the {\it extrapolation of $L^p$-maximal regularity}, which can be simply considered as a special variant of the well-posedness with respect to various Banach function spaces, have been studied recently; see, e.g. \cite{PrSi04, AuAx11a, ChFi14, FaHyLi20, ChKr18, Kr22}.  
Beyond a natural theoretical interest in this phenomenon, the maximal regularity with respect to a more general function space is an important tool for the study of associated non-linear problems; see, e.g. \cite{KuWe04, PrSi07, AuAx11a}.  

In the periodic case, the well-posedness and maximal regularity were addressed mainly in the context of the classical Lebesgue, Besov, Triebel-Lizorkin spaces; see, e.g. the corresponding series of the references mentioned above.
The aim of this article is to extend such results to a much wider context of general Banach function spaces; see the main results of this paper, Theorems \ref{mr thm}, \ref{charact of mr}, \ref{charact of wp}, and \ref{charact with Z}.  In particular, we clarify the phenomenon of the extrapolation of $L^p$-maximal regularity for several periodic evolution equations modelled by \eqref{AP}; see Theorems \ref{charact of mr} and \ref{charact with Z}, as well as Section \ref{last}. These results, in particular, provide counterpart of the euclidean line of research mentioned above for the periodic situation. In addition, we provide a convenient framework for such studies, which reveals an underlying structure, allows to simplify and unify technicalities mainly resulting from the fact that we deal with higher order Marcinkiewicz's conditions, and allows to handle different questions (distributional or strong solvability, maximal regularity) in a unified manner. It is achieved with the help of two auxiliary results Theorem \ref{conditions} and Lemma \ref{lem to mr}. On the other-hand, it allows to extend many results from the related literature in several ways, by showing that assumptions usually made to get those results in the $L^p$-setting are sufficient for a large class of Banach function spaces; see Section \ref{last}.

To establish such results we make a revision of underlying multiplier results from \cite{ArBu02, ArBu04, BuKi04} applied in the context of $L^p$ setting. 
Roughly, our main periodic multiplier results, see Theorems \ref{new} and \ref{ext th},  assert that the standard multiplier conditions, which in the literature  usually  are imposed on the symbol of a Fourier multiplier to get its boundedness on the classical (vector-valued) Lebesgue, Besov or Triebel-Lizorkin spaces, are sufficient for its boundedness on much larger classes of spaces. Such classes include general Banach function spaces $\Phi$ and/or the scales of Besov and Triebel-Lizorkin spaces defined on the basis of $\Phi$. In particular, these results extend \cite[Theorem 1.3]{ArBu02},  \cite[Theorem 4.5]{ArBu04} and \cite[Theorem 3.2]{BuKi04}. 
Our proofs differ from the proofs presented in those papers. We rely on direct maximal function estimates; see the proofs of Lemma \ref{fact 2} and Theorem \ref{new}. 
 
We conclude with a remark on the strategy of the proof of our abstract extrapolation result, Theorem \ref{ext th}. Since the theory of periodic distributions presents a simplification in comparison to that on the real line, one could expect the same in the context of multiplier theorems. In fact, we show that such simplification is reflected mainly in the representation formulas for periodic Fourier multipliers; see Section \ref{per dist} for the further comments and Lemma \ref{aux obs}. In particular we do not address here  problems which appear in the euclidean setting; see, e.g. \cite[Problems 3.2 and 3.3]{HyWe07} and \cite[Section 4]{Kr22}. 
However, at some points (for instance, when the interplay between the regularity of the symbol and its Fourier transform is crucial), the euclidean setting presents some benefits in comparison to the periodic one. For this reason, instead of trying to prove some periodic results in a complete analogy to corresponding ones known in the euclidean setting, we deduce them from their euclidean counterparts via transference techniques; see the proof of Theorem \ref{ext th} (cf. also Lemma \ref{fact}). The tools for such transference methods are workout in Section \ref{transf sec}, which may be of independent interest; see  operator-valued variants of Jodeit's type theorem, Theorems \ref{Jodeit th} and \ref{variant of Jodeit's th}, as well as Lemmas \ref{weighted Lp} and \ref{RdeF}.

The organization of the paper is well-reflected by the titles of the following (sub)sections. 

\section{Auxiliary results}\label{spaces}

\subsection{Function spaces}
We refer the reader to the monograph by Bennett and Sharpley \cite{BeSh88} for the background on Banach function spaces. Here, we mention only several facts we use in the sequel.  

Let $\Psi$ be a Banach function space over $(G,\ud t)$, where $G$ denotes $\R$ or $\T$ equipped with the Lebesgue measure. It means that $\Psi$ is a Banach space, which is an order ideal of $L^0:=L^0(G, \ud t)$, i.e. for every $f\in L^0$ and $g\in \Psi$ if $|f|\leq |g|$, then $f\in \Psi$ and $\|f\|_\Psi \leq \|g\|_\Psi$. 
Here, $L^0$ stands for the space of all complex measurable functions on $G$ (as usual, any two functions equal almost everywhere are identified).
Moreover, $\Psi$ has Fatou's property, and by the Lorentz-Luxemburg theorem \cite[Theorem 2.7, p.10]{BeSh88}, $(\Psi')' = \Psi$ with equal norms. Here, $\Psi'$ denotes the {\it (K\"othe) dual} (or {\it associated space}) of $\Psi$; see \cite{BeSh88}.

We define the vector-valued variant of Banach function spaces $\Psi$ as follows. Let $X$ be a Banach space with norm $|\cdot|_X$. Set 
\[
\Psi (G; \X ) :=\{ f:G \rightarrow X \textrm{ strongly measurable}: \;\;  |f|_X\in \Psi\}
\]
and $\|f\|_{\Psi(G;X)} : = \||f|_X\|_{\Psi}$ for $f\in \Psi (X)$.
Throughout, the symbol $\Phi$ is reserved to denote a Banach function space over $(\T,\ud t)$. Note that if a function $e_0(\tau):= 1$ $(\tau \in \T)$ is in $\Phi$, then by the ideal property of $\Phi$ we get that $L^\infty(\T;X)\subset \Phi(\T; X)=:\Phi(X)$. In particular, if $\cP(\T;X)$ denotes the set of all $X$-valued polynomials on $\T$, i.e.
\[
\cP(X):=\cP(\T;X):= \left\{ \sum_{k=-N}^{N} e_k \otimes x_k: N\in \N, x_k\in X  \right\}
\]
then $\cP(X)\subset \Phi(X)$. Here, $(e_k \otimes x)(\tau):= e_k(\tau) x$, where $e_k (\tau):= \tau^k$ ($\tau\in \T$, $k\in \ZZ$, $x\in X$).

Moreover, we introduce a variant of vector-valued Besov and Triebel-Lizorkin spaces  corresponding to a Banach function space $\Phi$.

Let $\cD'(\T; X):= \cL(\cD,X)$, where $\cD:=\cD(\T)$ is a space of all complex-valued infinitely differentiable functions on $\T$ equipped in the usual locally convex topology. We refer to \cite[Section 3]{ScTr87} or \cite{Ed79} for the backgrounds on the scalar distributions on $\T$, and to \cite[Section 2]{ArBu04} for their vector-valued counterpart. 
For instance, relying on \cite[Proposition 2.1]{ArBu04}, it is readily seen that for each $\psi\in \cC(\R)$ with the compact support, the operator $\psi(\Delta)$ given by 
\[
\psi(\Delta)f := \sum_{k\in\ZZ} e_k\otimes \psi(k) \hat f(k) \qquad (f\in \cD'(\T; X))
\]
is in $\cL(\cD'(\T; X))$.

 Let $\{\psi_j\}_{j \in \N_0}$ be the resolution of the identity on $\R$ generated by a function $\psi\in \cC^\infty(\R)$ such that $\psi\equiv 1$ on $[-1,1]$ and $\supp \psi \subset [-2,2]$, i.e.  
\[
\psi_0 :=\psi, \qquad \psi_j := \psi(2^{-j} \cdot) - \psi(2^{-j+1} \cdot)\quad  \textrm{ for } j\in \N. 
\]
One can check that $\{\psi_j(\Delta)\}_{j\in \N_0}$ is the resolution of the identity operator on $\cD'(\T;X)$, i.e. for every $f\in \cD'(\T; X)$
\[
\sum_{j\leq N} \psi_j(\Delta)f = \psi(2^{-N} \Delta)f \rightarrow f \qquad \textrm{ in } \cD'(\T;X) \textrm{ as } N\rightarrow \infty.
\]

Let $\Phi$ be a Banach function space over $(\T, \ud t)$. For all $s\in \R$ and $q\in [1,\infty]$ we set (with usual modification when $q=\infty$):
\[
B^{s,q}_{\Phi}(\T, X):= \left\{f\in \cD'(\T; X):\,\,  \|f\|_{B^{s,q}_{\Phi}(\T, X)} :=\left( \sum_{j=0}^\infty  \|  2^{sj} \psi_j(\Delta) f \|^q_{\Phi(\T; X)} \right)^{1/q}<\infty\right\}, 
\]
\[ 
F^{s,q}_{\Phi}(\T,X) := \left\{ f\in \cD'(\T; X): \,\, \|f\|_{F^{s,q}_{\Phi}(\T,X)} :=\left\|  \left( \sum_{j=0}^\infty  | 2^{sj} \psi_j(\Delta)f |^q_X \right)^{1/q}  \right\|_\Phi < \infty \right\}.
\]

For $G=\R$ and a general Banach function spaces $\Psi$ over $(\R,\ud t)$, the corresponding generalized vector-valued Besov $B^{s,q}_\Psi(\R; X)$ and Triebel-Lizorkin spaces $F^{s,q}_\Psi(\R,X)$ were introduced in \cite[Section 2]{Kr22}. 
In the case when $G=\T$, the vector-valued counterpart of the classical Besov spaces $B^{s,q}_p(\T)$, i.e. for $\Phi=L^p$ over $(\T, \ud t)$, was introduced in \cite{ArBu04}. 
In the both cases ($G=\T$ or $G=\R$), under some additional assumption on $\Psi$, one can show that the spaces $B^{s,q}_\Psi(G;X)$ and $F^{s,q}_\Psi(G;X)$ share most of the properties of their well-known scalar prototypes which correspond to $\Psi = L^p$ and $X = \C$.  
For our further purposes, we only need a few basic properties of the spaces $B^{s,q}_\Phi(\T; X)$ and $F^{s,q}_\Phi(\T; X)$ corresponding to a general Banach function space $\Phi$ over $(\T,\ud t)$; see Proposition \ref{per Besov} below. 
For their proofs we need a preliminary result on the boundedness of Fourier multipliers $\psi_j(\Delta)$, $j\in \N_0$, (and other ones) on the underlying space $\Phi(X)$. Then, the proof of Proposition \ref{per Besov} can be carried out in an analogy to the non-periodic case when $G=\R$ as it has been treated in \cite{Kr22}; see \cite[Lemma 3.6]{Kr22} and \cite[Lemma 5.3]{Kr22}.
However, it should be pointed out that in a comparison to the proofs of some results in the case $G=\R$, the proofs of their periodic counterparts admit an essential simplification, which we indicate below; see also Subsection \ref{per dist}.

As it could be already noted above, we omit '$\T$' in the symbols of spaces over $(\T, \ud t)$. Similarly, in the scalar case, i.e. when $X=\C$, $\C$ is also omitted in the corresponding symbols. For instance, $B^{s,q}_\Phi(X)$ stands for $B^{s,q}_\Phi(\T;X)$ and $\cC^\infty_c(\R)$ denotes $\cC^\infty_c(\R;\C)$, etc.

\subsection{Preliminary results on boundedness of periodic multipliers} 
For a polynomially bounded sequence $m:\ZZ\rightarrow \cL(X,Y)$ we write $\check m$ to denote the corresponding periodic distribution in $\cD'(\cL(X,Y))$, i.e.
\[
\widecheck m := \sum_{k\in \ZZ} e_k\otimes  m(k).
\]

\begin{lemma}\label{fact} Let $m:\ZZ \rightarrow \cL(X,Y)$ be such that $\check m$ is in $L^{\infty}(\cL(X,Y))$. Then, for every Banach function space $\Phi$ over $(\T,\ud t)$ such that $L^\infty\subset \Phi \subset L^1$ the operator $m(\Delta)$ given by 
\[
m(\Delta)f := \sum_{k} e_k \otimes m(k) \hat f (k)  \quad f\in \cD'(X)
\] 
is in $\cL(\Phi(X),\Phi(Y))$ with $\|m(\Delta)\|_{\cL(\Phi(X),\Phi(Y))}\leq c_\Phi \|\widecheck{m}\|_{L^\infty}\|\chi_\T\|_\Phi$, where $c_\Phi$ denotes the norm of embedding operator from $\Phi$ into $L^1$. 

\end{lemma}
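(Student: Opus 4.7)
The strategy is to identify $m(\Delta)$ with convolution against $\check m$ on $\T$.

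Since $L^\infty \subset \Phi \subset L^1$ with embedding constant $c_\Phi$ for the second inclusion, every $f\in \Phi(X)$ belongs to $L^1(\T;X)$ with $\|f\|_{L^1(X)}\leq c_\Phi\|f\|_{\Phi(X)}$. Given $\check m \in L^\infty(\cL(X,Y))$, the convolution
$$
(\check m \ast f)(t) := \int_\T \check m(t-s)\, f(s)\,\ud s
$$
is therefore defined for a.e.\ $t\in \T$, lies in $L^\infty(\T;Y)$, and satisfies the pointwise bound $|(\check m \ast f)(t)|_Y \leq \|\check m\|_{L^\infty} \|f\|_{L^1(X)}$.

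I would next verify the identification $m(\Delta)f = \check m \ast f$ in $\cD'(\T;Y)$. Both sides are well-defined distributions (the left side converges in $\cD'$ by the preliminary remarks preceding the lemma, and the right side even lies in $L^\infty(\T;Y)\subset \cD'(\T;Y)$). Applying Fubini's theorem to compute the $k$-th Fourier coefficient of $\check m \ast f$ and using the character identity $e_{-k}(t) = e_{-k}(t-s)\,e_{-k}(s)$ yields $\widehat{\check m \ast f}(k) = m(k)\hat f(k)$, which by definition agrees with the $k$-th Fourier coefficient of $m(\Delta)f$. Injectivity of the Fourier transform on $\cD'(\T;Y)$ then forces equality.

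Combining the two observations gives, for a.e.\ $t\in \T$,
$$
|m(\Delta)f(t)|_Y \leq c_\Phi\, \|\check m\|_{L^\infty}\, \|f\|_{\Phi(X)}.
$$
Since $L^\infty \subset \Phi$, we have $\chi_\T \in \Phi$, so the ideal property of $\Phi$ yields
$$
\|m(\Delta)f\|_{\Phi(Y)} = \bigl\| |m(\Delta)f|_Y \bigr\|_\Phi \leq c_\Phi\, \|\check m\|_{L^\infty}\, \|\chi_\T\|_\Phi\, \|f\|_{\Phi(X)},
$$
as required. The only step requiring any care is the distributional identification $m(\Delta)f = \check m \ast f$; once this is established, the norm estimate is essentially a Young-type inequality powered by the embedding $\Phi \subset L^1$.
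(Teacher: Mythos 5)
Your proposal is correct and follows essentially the same route as the paper: identify $m(\Delta)f$ with the convolution $\check m \ast f$, derive the uniform pointwise bound $|(\check m \ast f)(\tau)|_Y \leq c_\Phi\|\check m\|_{L^\infty}\|f\|_{\Phi(X)}$ from the embedding $\Phi\subset L^1$, and conclude via the ideal property of $\Phi$ and $\chi_\T\in\Phi$. The only difference is that you spell out the Fourier-coefficient verification of $m(\Delta)f=\check m\ast f$, which the paper takes as read.
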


\begin{proof} A standard argument shows that $\Phi\hookrightarrow L^1$, i.e. $\|g\|_{L^1}\leq c_{\Phi} \|g\|_\Phi$ ($g\in \Phi$). Since for every $\eta \in L^\infty$, $g\in \Phi$ and $\tau \in \T$ we have
\[
| (\eta \ast g) (\tau)|\leq c_\Phi \|\eta\|_{L^\infty} \|g\|_\Phi
\]
we infer that 
\[
\|\eta \ast g \|_\Phi\leq c_\Phi \|\eta\|_{L^\infty} \|
\chi_\T\|_\Phi \|g\|_{\Phi},
\] where $\chi_\T$ is the characteristic function of $\T$.
Since for every $f\in L^1(X)$ we have
\[
 | (m(\Delta)f) (\tau) |_Y = |(\widecheck m \ast f) (\tau) |_Y \leq \big(\| \widecheck m \|_{\cL(X,Y)} \ast |f|_X\big) (\tau)\qquad (\tau \in \T),
\]
 the proof is complete.   
\end{proof}

In particular, Lemma \ref{fact} shows that each operator $\psi_j(\Delta)$, $j\in \N_0$, is bounded on $\Phi(X)$ if $L^\infty\subset \Phi \subset L^1$. To show their uniform boundedness on $\Phi(X)$ we need an additional assumption on the boundedness of the Hardy-Littlewood maximal operator on $\Phi$.

Recall that the Hardy-Littlewood maximal operators $M_\T$ and $M_\R$ are defined by 
\[
M_\T f(\tau) := \sup_{\epsilon>0} \frac{1}{\epsilon}\int_{\Gamma(\tau, \epsilon)}
 |f(\zeta) | \, |\ud \zeta| \qquad (\tau \in \T)
\] for $f\in L^1(\T)$, where $\Gamma(\tau, \epsilon):= \T\cap \{z\in \C: |z-\tau| \leq \epsilon \}$,    and 
\[
M_\R f(t) := \sup_{\epsilon > 0} \frac{1}{2\epsilon} \int_{[t-\epsilon, t+\epsilon]} |f(s)| \ud s \qquad (t\in \R)
\] for $f\in L^1_{loc} (\R)$. In the view of the standard identification between the function $f$ on $\T$ with its  $2\pi$-periodic extension $\tf$ on $\R$, $\tf(t):=f(e^{it})$, $t\in\R$,  there exists a constant $c>0$ such that 
\[
c^{-1} (M_\R\tf)(t)\leq (M_\T f)(e^{it})\leq  c (M_\R\tf)(t) \qquad (f\in L^1(\T), t\in\R).
\]

Note that the assumption that $M_\T$ is bounded on a Banach function space $\Phi$ implies that $L^\infty\subset \Phi \subset L^1$.  Indeed, if $f\in \Phi\setminus\{0\}$, then there exists a constant $c>0$ and a measurable subset $A$ of $\T$ such that $|f|\geq c\chi_A$, i.e. $\chi_A\in \Phi$. Hence, by the boundedness of $M_\T$ on $\Phi$, we get that $M_\T \chi_A \geq \frac{|A|}{2 \pi}$. Consequently, by the ideal property of $\Phi$, we obtain that $L^\infty\subset \Phi$.

The following lemma provides a periodic counterpart of \cite[Chapter 2,~(17)~p.57]{St93}.
For its proof we need a vector-valued variant of Fej\'er's theorem, which asserts that for an arbitrary Banach space $X$ and $g\in L^1(\T; X)$, if $S_l(g):= \sum_{|k|\leq l} e_k\otimes \hat g(k)$ for $l\in \N_0$, then 
\[
 \frac{1}{N+1} \sum_{l = 0}^{N} S_l(g)\rightarrow g \quad \textrm{ as } N\rightarrow \infty  \textrm{ in } L^1(\T;X).
\]
Its proof follows the lines of the proof of its scalar prototype almost verbatim.

\begin{lemma}\label{fact 2} 
\emph{(i)} Let $\eta \in \cC_c(\R; \cL(X,Y))$ be such that $\|\cF^{-1}\eta(t)\|_{\cL(X,Y)}\leq \phi(t)$, $t\in \R$, for an even, radially decreasing, integrable function $\phi$ on $\R$. Then, there exists a constant $c>0$ such that for every  $f\in L^1(\T; X)$ we have
\begin{equation}\label{estim for M}
|\eta(\Delta)f(\tau)|_{Y} \leq c \|\phi\|_{L^1(\R)}  (M_\T |f|_X)(\tau) \qquad (\tau  \in \T). 
\end{equation} 

\emph{(ii)} Let $\eta \in \cC_c^\infty(\R)$ and set $\eta_\epsilon := \eta(\epsilon \cdot)$, $\epsilon>0$. Then, for every Banach function space $\Phi$ over $(\T,\ud t)$ such that $M_\T$ is bounded on $\Phi$, the operators   
$\eta_\epsilon(\Delta)$, $\epsilon>0$, are uniformly in $\cL(\Phi(X))$.
\end{lemma}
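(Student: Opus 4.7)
The plan is to prove (i) by representing the multiplier $\eta(\De)$ as convolution with a periodized kernel, and then to estimate that convolution pointwise. Set $K:=\cF^{-1}\eta$; by hypothesis $|K(t)|_{\cL(\X,Y)}\le\phi(t)$, so $K\in L^{1}(\R;\cL(\X,Y))$ and its $2\pi$-periodization $K_{\T}(\tau):=\sum_{n\in\ZZ}K(\tau+2\pi n)$ belongs to $L^{1}(\T;\cL(\X,Y))$. A direct Fourier-coefficient computation (equivalently, Poisson summation) gives $\widehat{K_{\T}}(k)=c_{0}\,\eta(k)$ for all $k\in\ZZ$, where $c_{0}$ is a normalization constant. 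Since $\supp\eta$ is compact, only finitely many $\eta(k)$ are non-zero, so both $\eta(\De)f$ and $c_{0}(K_{\T}\ast f)$ are trigonometric polynomials with identical Fourier coefficients; the identity is immediate for $f\in\cP(\X)$ and extends to $f\in L^{1}(\T;\X)$ via the vector-valued Fej\'er theorem recalled above (the Fej\'er means of $f$ converge to $f$ in $L^{1}(\T;\X)$, while $\eta(\De)$ is continuous on the fixed finite-dimensional spectral range). Hence $\eta(\De)f(\tau)=c_{0}(K_{\T}\ast f)(\tau)$ pointwise on $\T$.

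It remains to bound the right-hand side pointwise. One has
\[
|(K_{\T}\ast f)(\tau)|_{Y}\le(|K_{\T}|_{\cL(\X,Y)}\ast|f|_{\X})(\tau)\le(\phi_{\T}\ast|f|_{\X})(\tau),
\]
where $\phi_{\T}(\tau):=\sum_{n}\phi(\tau+2\pi n)$. Unfolding the periodic convolution to the line, $(\phi_{\T}\ast|f|_{\X})(\tau)=(\phi\ast\widetilde{|f|_{\X}})(\tau)$, where the tilde denotes $2\pi$-periodic extension to $\R$. Because $\phi$ is even, radially decreasing and integrable, the classical scalar estimate of \cite[Chapter 2,(17),p.57]{St93} gives $(\phi\ast g)(t)\le\|\phi\|_{L^{1}(\R)}(M_{\R}g)(t)$ pointwise, and the equivalence $(M_{\R}\widetilde{|f|_{\X}})(t)\le c\,(M_{\T}|f|_{\X})(e^{it})$ quoted in the text then delivers \eqref{estim for M}.

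Part (ii) reduces to (i) by a scaling argument. Since $\eta\in\cC^{\infty}_{c}(\R)$, $\cF^{-1}\eta$ is Schwartz, so it admits an even, radially decreasing, integrable majorant $\phi_{0}$ (for instance $\phi_{0}(t):=C(1+|t|)^{-2}$ for $C$ large enough). The dilation identity $\cF^{-1}\eta_{\var}(t)=\var^{-1}(\cF^{-1}\eta)(\var^{-1}t)$ produces the majorant $\phi_{\var}(t):=\var^{-1}\phi_{0}(\var^{-1}t)$, which is still even, radially decreasing and satisfies $\|\phi_{\var}\|_{L^{1}(\R)}=\|\phi_{0}\|_{L^{1}(\R)}$, independent of $\var$. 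Applying (i) to $\eta_{\var}$ with this majorant yields $|\eta_{\var}(\De)f(\tau)|_{\X}\le c\|\phi_{0}\|_{L^{1}(\R)}(M_{\T}|f|_{\X})(\tau)$; taking $\Phi$-norms and using the assumed boundedness of $M_{\T}$ on $\Phi$ produces the uniform bound. The only delicate step is the identification $\eta(\De)f=c_{0}\,K_{\T}\ast f$ in (i); once this kernel representation is in hand, everything else is a routine reduction to the scalar maximal-function estimate on the line, and the scaling required for (ii) is transparent.
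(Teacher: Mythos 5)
Your proof is correct and follows essentially the same route as the paper: identify $\eta(\Delta)f$ with the convolution of $\cF^{-1}\eta$ against the periodic extension $\widetilde f$, invoke the classical majorization of convolution with an even, radially decreasing $L^1$ kernel by the maximal function, transfer $M_\R$ to $M_\T$, and obtain (ii) by the dilation-invariance of the $L^1$-norm of the majorant. The only (immaterial) differences are that you justify the kernel identity via periodization and uniqueness of Fourier coefficients rather than via Fej\'er means evaluated at every point, and that you cite the Stein estimate where the paper re-derives it by approximating $\phi$ with step functions.
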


\begin{proof} (i) Let $f\in L^1(\T;X)$. 
It is readily seen that for every $t\in \R$ the integral $\int_\R (\cF^{-1}\eta)(s)\widetilde f(t-s) \ud s =: (\cF^{-1}\eta \ast \widetilde f) (t)$ is absolutely convergent. Suppose that $\supp \eta \subset [-K, K]$ for some $K \in \ZZ$. Note that for 
every $t\in \R$ and $l>K$ we have 
\begin{align*}
\eta(\Delta)f(e^{it}) & = \sum_{|k|\leq l} e^{itk} \int_\R e^{-isk} \cF^{-1}\eta(s)  \ud s \hat f(k)\\
& = \int_\R \cF^{-1} \eta(s) \sum_{|k|\leq l} e^{i(t-s)k} \hat f(k)  \ud s. 
\end{align*}
Therefore, following the notation introduced in Fej\'er's theorem,  for $N>K$ and $t\in \R$ we get 
\begin{equation}\label{Fejer}
\frac{1}{N+1} \sum_{l=K}^{N}S_l(\eta(\Delta)f)(e^{it}) = \int_\R \cF^{-1} \eta(s) \frac{1}{N+1} \sum_{l=K}^{N} S_l(g_t)(e^{i s}) \ud s,
\end{equation}
where $g_t(\tau):= f(e^{it}\bar \tau)$, $\tau\in \T$. 
By our assumption on $\cF^{-1}\eta$ it is straightforward to show that the right-hand side of \eqref{Fejer} converges to $(\cF^{-1}\eta \ast \widetilde f)  (t)$ as $N\rightarrow \infty$. Since the left-hand side is equal $\frac{N-K+1}{N}\eta(\Delta)f(e^{it})$, we infer that for every $t\in \R$
\[
\left| \eta(\Delta)f(e^{it})\right|_Y = \left|(\cF^{-1}\eta \ast \widetilde f)  (t) \right|_Y \leq (\phi \ast |\widetilde f|_X)(t).
\]

Further, note that for each $\epsilon>0$ there exists a function $\phi_\epsilon:= \sum_{j=0}^N c_j \chi_{B_j}$, where $B_j$ denotes an interval with the center in $0$ and $c_j>0$, such that $0\leq \phi_\epsilon\leq \phi$, $\|\phi - \phi_\epsilon\|_{L^\infty} <\epsilon$ and $\|\phi - \phi_\epsilon\|_{L^1} <\epsilon$ . One can readily check that for every $\delta>0$ there exists $\epsilon$ such that for every $t\in \R$ we have
\[
|(\phi\ast |\widetilde f|_X) (t)| \leq ((\phi - \phi_\epsilon) \ast |\widetilde f|_X) (t) + (\phi_\epsilon \ast |\widetilde f|_X) (t) \leq \delta + \|\phi_\epsilon\|_{L^1} (M_\R |\widetilde f|_X) (t).
\]
Since there exists $c>0$, independent of $f\in L^1(\T;X)$, such that $(M_\R|\widetilde f|_X)(t)\leq c (M_\T |f|_X)(e^{it})$ for every $t\in \R$, we get \eqref{estim for M}. 

(ii) Note that $(\cF^{-1}\eta_\epsilon)(t) = \frac{1}{\epsilon}(\cF^{-1}\eta) (\frac{t}{\epsilon})$ for every $\epsilon>0$ and $t\in \R$. Moreover, there exists a constant $C>0$ such that $|(\cF^{-1}\eta) (t)|\leq \frac{C}{1+t^2}$, which yields $|(\cF^{-1}\eta_\epsilon) (t)|\leq \frac{\epsilon^{-1}  C}{1+(\epsilon^{-1}t)^2}=:\phi_\epsilon(t)$ for every $t\in \R$ and $\epsilon>0$. Since $\|\phi_\epsilon\|_{L^1} = C\pi$ for every $\epsilon$. Therefore, \eqref{estim for M} gives the desired claim.
\end{proof}

\subsection{Fundamental properties of generalized Besov and Triebel-Lizorkin spaces}

Here, we collect the fundamental properties of such spaces, which play a role in our further studies; see Proposition \ref{per Besov} below.

We start with some preliminaries. Note that for every $s\in \R$ the space $B^{-s,1}_\Phi(X^*)$ embeds into $(B^{s,\infty}_\Phi(X))^*$. Indeed, this embedding is given by the following duality pairing: for each $f \in B^{s,\infty}_{\Phi}(X)$ and $g\in B^{-s,1}_{\Phi'}(X^*)$ we set
\begin{equation}\label{duality}
\bracket{g}{f}:= \sum_{j,l\in \N_0} \int_{\T} \bracket{\psi_l(\Delta)g(t)}{\psi_j(\Delta)f(t)}_{X^*,X} \ud t.
\end{equation}
Note that 
\[
\bracket{g}{f}:= \sum_{r\in \{\pm 1, 0\}} \sum_{l\in \N_0} 
\bracket{\breve{\psi}_{j}(\Delta)\psi_l(\Delta)g}{\chi_j(\Delta)f}_{\Phi'(X^*),\Phi(X)},
\] where $\breve{\psi}_j:=\psi_j(-\cdot)$ and $\chi_j:=\psi_{j-1}+\psi_j+\psi_{j+1}$ $(j\in \N_0)$ with $\psi_{-1} \equiv 0$ if $j=0$.

\begin{proposition}\label{per Besov} Let $X$ be a Banach space and $\Phi$ be a Banach function space over $(\T, \ud t)$. Then, the following assertions hold. 
\begin{itemize}
\item [(i)]  If $L^\infty \subset \Phi\subset L^1$, then for every 
\[
\E \in \left\{ B^{s,q}_\Phi,\, F^{s,r}_\Phi \,: s\in \R, q\in [1,\infty], r\in (1,\infty) \right\} 
\]
\begin{equation}\label{embedding}
\cP(X) \subset \E( X) \hookrightarrow \cD'(X).
\end{equation}
In particular, $\E(X)$ is a Banach space.

\item [(ii)] If $M_\T$ is bounded on $\Phi$, then for all $s\in \R$,  $\cP(X)$ is a dense subset of $B^{s,q}_\Phi(X)$ in the norm topology  if $q\in[1,\infty)$, and in the $B^{-s,1}_{\Phi'}(X^*)$-topology of $B^{s,\infty}_{\Phi}(X)$ if $q=\infty$.

 More precisely, for every $f\in B^{s,q}_\Phi(X)$  we have that 
\[
 \sum_{0\leq j\leq N} \psi_j(\Delta) f = \psi(2^{-N} \Delta)f \rightarrow f  \quad \textrm{ as } N\rightarrow \infty.
\]  
in $B^{s,q}_\Phi(X)$ for each $q<\infty$, and in the other case, i.e. $q=\infty$, the convergence holds in the $\sigma(B^{s,\infty}_{\Phi}(X), B^{-s,1}_{\Phi'}(X^*))$-topology.
 
\item [(ii')] If $M_\T$ is bounded on $\Phi$ and its dual $\Phi'$, then $\cP(X)$ is a dense subset of $F^{s,q}_\Phi(X)$ and for ever $f\in F^{s,q}_\Phi(X)$  we have that 
\[
 \sum_{0\leq j\leq N} \psi_j(\Delta) f = \psi(2^{-N} \Delta)f \rightarrow f  \quad \textrm{ as } N\rightarrow \infty.
\]    

\item [(iii)] If $M_\T$ is bounded on $\Phi$, then for every distribution $f\in \cD'(X)$, $f$ belongs to $B^{s,q}_{\Phi}( X)$ if and only if $\partial f$ belongs to $B^{s-1,q}_\Phi(X)$. Moreover, the function 
\begin{equation}\label{equiv norm}
B^{s,q}_\Phi( X) \ni f \mapsto  \left\| \partial f \right \|_{B^{s-1,q}_\Phi(X)}
\end{equation}
is an equivalent norm on $B^{s,q}_\Phi( X)$.
\end{itemize}
\end{proposition}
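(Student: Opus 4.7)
The plan is to treat the four parts in order, using a common toolkit: the Littlewood--Paley decomposition $\{\psi_j(\De)\}_{j\in\N_0}$, the near-orthogonality $\psi_j(\De)\psi_l(\De) = 0$ whenever $|j-l|\geq 2$, and the boundedness results of Lemma~\ref{fact 2}. The argument parallels the euclidean case of \cite[Lemmas~3.6 and~5.3]{Kr22}, with the simplification that $\psi(2^{-N}\De)f$ is automatically in $\cP(X)$ for every $f\in \cD'(X)$, since it has finitely supported Fourier coefficients.

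For (i), to see $\cP(X)\subset \E(X)$, note that $\psi_j(\De)(e_k\otimes x) = \psi_j(k)(e_k\otimes x)$, so only finitely many projections of a given polynomial are nonzero and each lies in $L^\infty(X)\subset \Phi(X)$ by $L^\infty\subset \Phi$. For the embedding $\E(X)\hookrightarrow \cD'(X)$, I would fix $\varphi\in \cD(\T)$ and, using near-orthogonality, write $\bracket{f}{\varphi} = \sum_j \int_\T \psi_j(\De)f(t)\,\chi_j(\De)\varphi(t)\,\ud t$ with $\chi_j := \psi_{j-1}+\psi_j+\psi_{j+1}$; the rapid decay of $\chi_j(\De)\varphi$, together with $\Phi\subset L^1$ and H\"older's inequality against $\Phi'$, bounds $|\bracket{f}{\varphi}|$ by $C(\varphi)\|f\|_{\E(X)}$. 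Completeness then follows routinely from the Fatou property of $\Phi$, by taking a $\cD'$-limit of a Cauchy sequence and passing to the limit on each Littlewood--Paley block.

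For (ii) and (ii'), the main mechanism is the polynomial approximant $\psi(2^{-N}\De)f=\sum_{j\leq N}\psi_j(\De)f\in \cP(X)$. Because $\psi_l(\De)(I-\psi(2^{-N}\De))=0$ whenever $l\leq N-1$, the norm of the remainder $f-\psi(2^{-N}\De)f$ collapses to a tail sum over $l\geq N$. For (ii) with $q<\infty$, dominated convergence on $\ell^q$ drives the tail to zero once Lemma~\ref{fact 2}(ii) is invoked to dominate the boundary blocks $l\in\{N,N+1\}$ uniformly in $N$. The case $q=\infty$ is handled by pairing via~\eqref{duality} with $g\in B^{-s,1}_{\Phi'}(X^*)$: the same truncation yields an $\ell^1$ tail that tends to $0$. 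For (ii'), the analogous pointwise estimate on the remainder is dominated, via Lemma~\ref{fact 2}(i), by a Fefferman--Stein-type maximal expression, whose $\Phi(\ell^q)$-boundedness follows from $M_\T\in\cL(\Phi)\cap \cL(\Phi')$ by Rubio de Francia extrapolation, and one concludes by dominated convergence in $\Phi(\ell^q)$.

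For (iii), the key is that $\partial$ commutes with $\psi_j(\De)$ and its symbol $ik$ has size $\sim 2^j$ on $\supp\psi_j$ for $j\geq 1$. Writing $\chi_j$ as above (so that $\chi_j\equiv 1$ on $\supp\psi_j$), the Fourier multipliers with symbols $2^{-j}(ik)\chi_j(k)$ and $2^{j}(ik)^{-1}\chi_j(k)$ (extended by $0$ at $k=0$) are scaled copies $\eta(\cdot/2^j)$ of a fixed $\eta\in \cC_c^\infty(\R)$ vanishing in a neighbourhood of the origin, and Lemma~\ref{fact 2}(ii) yields uniform $\cL(\Phi(X))$-bounds. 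Consequently $2^{(s-1)j}\|\psi_j(\De)\partial f\|_{\Phi(X)}\simeq 2^{sj}\|\psi_j(\De)f\|_{\Phi(X)}$ for every $j\geq 1$. The main obstacle is the block $j=0$: since $\partial$ annihilates the constant mode $e_0\otimes \hat f(0)$, the quantity $\|\psi_0(\De)\partial f\|_{\Phi(X)}$ only controls $\psi_0(\De)f$ modulo that constant. One therefore extracts $\hat f(0)$ separately (using $|\hat f(0)|_X\lesssim \|\psi_0(\De)f\|_{\Phi(X)}$, which follows from $\Phi\subset L^1$), so that the norm in~\eqref{equiv norm} is to be read modulo constants, i.e.\ the actual equivalence is $\|f\|_{B^{s,q}_\Phi(X)}\simeq |\hat f(0)|_X + \|\partial f\|_{B^{s-1,q}_\Phi(X)}$. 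For the distributional characterisation, given $f\in\cD'(X)$ with $\partial f\in B^{s-1,q}_\Phi(X)$, the mean $\hat f(0)$ is well-defined on $\cD'(X)$ and the inverse multipliers reconstruct each $\psi_j(\De)f$ for $j\geq 1$, placing $f$ in $B^{s,q}_\Phi(X)$.
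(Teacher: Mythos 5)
Your proposal follows essentially the same route as the paper's proof: part (i) via pairing $f$ against $\chi_j(\Delta)\breve\phi$ and exploiting the decay of $\|2^{-js}\chi_j(\Delta)\breve\phi\|_{\Phi'}$ (which the paper makes quantitative with the auxiliary weight $\rho(t)=(1+t^2)^{-\alpha}$ and Lemma \ref{fact} — a detail your ``rapid decay'' should be backed up by); part (ii) via the polynomial approximants $\psi(2^{-N}\Delta)f$, near-orthogonality of the blocks and the uniform bounds of Lemma \ref{fact 2}(ii); part (iii) via the observation that $2^{-j}(ik)\chi_j(k)$ and $2^{j}(ik)^{-1}\chi_j(k)$ are dilates of fixed $\cC^\infty_c$ functions. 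The one genuine divergence is in (ii'): you invoke a Fefferman--Stein vector-valued maximal inequality on $\Phi(\ell^q)$ (itself obtained by extrapolation), whereas the paper runs the Rubio de Francia iteration operators $\cR,\cR'$ directly on the tail quantities, starting from the identity $F^{s,q}_{L^q_w}=B^{s,q}_{L^q_w}$ for $w\in A_q(\T)$. Both work; yours packages the extrapolation more compactly at the cost of quoting an extra black box. In either version, note that the last step — sending the pointwise-decreasing tail to $0$ in $\Phi$-norm by ``dominated convergence'' — uses more than the Fatou property (it is an order-continuity statement); the paper's own proof shares this soft spot.

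Your remark on (iii) is a genuine catch rather than a gap: since $\partial$ annihilates $e_0\otimes\hat f(0)$ while $\|e_0\otimes x\|_{B^{s,q}_\Phi(X)}=|x|_X\,\|e_0\|_\Phi\neq 0$, the map \eqref{equiv norm} vanishes on nonzero constants and is only a seminorm; the paper's converse argument indeed only treats the blocks $j\geq 2$ and never recovers $\hat f(0)$. Your corrected equivalence $\|f\|_{B^{s,q}_\Phi(X)}\simeq |\hat f(0)|_X+\|\partial f\|_{B^{s-1,q}_\Phi(X)}$ is what the argument actually proves, and the ``if and only if'' characterisation survives unchanged because $e_0\otimes\hat f(0)\in\cP(X)\subset B^{s,q}_\Phi(X)$.
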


\begin{remark} (a) Compared to the case of the real line $\R$ (see \cite[Lemma 3.6]{Kr22}), note that in the periodic case there is a {\it common} dense subset, i.e. $\cP(X)$, for all $\E(X)$ with 
\[
\E \in  \{ B^{s,q}_\Phi, \, F^{s,r}_\Phi\,: s\in \R, q\in [1,\infty], r\in (1,\infty) \}. 
\] 

(b) In contrast to \cite[Lemma 3.6]{Kr22}, in Proposition \ref{per Besov}, in the case of Besov spaces we do not assume that $M_\T$ is bounded on the dual of $\Phi'$. 
\end{remark}

By Lemma \ref{fact 2}, the proof of Proposition \ref{per Besov}, 
mimics the proof of the corresponding statements on $\R$, \cite[Lemmas 3.6 and 5.4]{Kr22}. For the convenience of the reader we provide some auxiliary observations which should be made.

\begin{proof}[The proof of Proposition \ref{per Besov}] 
 
(i) We start with the case when $\E = B^{s,q}_\Phi$. Since $\cP \subset L^\infty \subset \Phi$, the left inclusion in \eqref{embedding} holds readily. 
For the second one, note that for every $f\in B^{s,q}_\Phi(X)$ and $\phi\in \cD$ we have that 
\begin{align*}
\left|\left(\phi_j(\Delta)f \right)(\phi)\right|_X & =\left| \sum_{k\in \ZZ} \int_\T e_k \phi \ud t \, \psi_j(k)\hat f(k)\right|_X \\
& =  2\pi \left|\sum_{k\in \ZZ} \chi_j(k) \hat \phi(- k) \psi_j(k) \hat f(k)\right|_X \\
& \leq  2\pi \int_\T  \left|\sum_{k\in \ZZ} e_k\otimes \psi_j(k)\hat f(k) \right|_X \left|\sum_{l\in \ZZ} \chi_j(l) \hat \phi(-l) e_l \right| \ud t\\
&\leq 2\pi \left\| \psi_j(\Delta)\hat f \right\|_{\Phi(X)} \left\|\chi_j(\Delta) \breve\phi \right\|_{\Phi'}
\end{align*}
Here, $\breve{\phi}(e^{it})=\phi(e^{-it})$, $t\in [-\pi, \pi]$. Since $\lim_{N\rightarrow \infty} \sum_{j=0}^{N} \phi_j(\Delta) f = f$ in $\cD'( X)$,  we obtain that
 
\begin{align*}
| f(\phi) |_X & \leq \sum_{j\in \N_0} \left\| 2^{js} \psi_j(\Delta) f \right\|_{\Phi(X)} \| 2^{-js} \chi_j(\Delta)\breve\phi\|_{\Phi'} \\ 
& \leq \left( \sum_{j\in \N_0} \left\| 2^{jsq} \psi_j(\Delta) f \right\|^q_{\Phi(X)} \right)^{1/q} 
\left( \sum_{j\in \N_0} \| 2^{-js} \chi_j(\Delta)\breve\phi\|_{\Phi'}^{q'} \right)^{1/q'} \\ 
& \leq  \|f\|_{B^{s,q}_{\Phi}(X)} \left( \sum_{j\in \N_0} \| 2^{-js} \chi_j(\Delta)\check\phi\|_{\Phi'}^{q'} \right)^{1/q'}  
\end{align*} (with the usual modification when $q=\infty$). 
Take $\alpha > |s|+1$ and set $\rho(t):= (1+t^2)^{-\alpha}$, $t\in \R$. Then, for every $j\in \N_0$  and $\tau \in \T$ we have
\begin{align*}
2^{(|s|+1) j} \left| \cF^{-1}({\rho\chi_j}_{|\ZZ})(\tau)\right| & = \left| \sum_{k\in \ZZ} \tau^k \frac{2^{(|s|+1) j}}{(1+k^2)^\alpha}\chi_j(k) \right|\\
& \leq \sum_{2^{j-2} <k < 2^{j+2}} 4^\alpha 2^{-\alpha j} \leq 4^{\alpha + 1} 
\end{align*}
Therefore, by Lemma \ref{fact}(i), the operators $(2^{(|s|+1) j}\chi_j \rho)(\Delta)$, $j\in \N_0$, restrict to uniformly bounded operators in $\cL(\Phi')$.  

Since  $(\rho^{-1}(\Delta)\breve\phi)(\tau) = \sum_{k\in \ZZ} \tau^k (1+k^2)^\alpha \cF {\breve{\phi}}(k)$ $(\tau \in \T)$ is in $\cD \subset \Phi'$,  for every $j\in \N_0$ we infer that
\begin{align*}
\| 2^{j|s|} \chi_j(\Delta)\breve\phi \|_{\Phi'} & \leq 2^{-j} \| (2^{(|s|+1)j} \rho \chi_j)(\Delta)(\rho^{-1})(\Delta)\breve\phi  \|_{\Phi'}\\
& \leq 2^{-j} \| (2^{(|s|+1)j} \rho \chi_j)(\Delta)\|_{\cL(\Phi')} \|\rho^{-1} (\Delta)\breve\phi] \|_{\Phi'}\\
&\leq 2^{-j}4^{\alpha + 1} \|\rho^{-1} (\Delta)\breve\phi] \|_{\Phi'}.
\end{align*}
It yields $B^{s,q}_\Phi(X)\hookrightarrow \cD'(X)$. The proof in the case when $\E = F^{s,q}_\Phi$ follows closely the arguments presented above. Therefore, we omit it.\\

(ii) Note that for every $j\in \N$, $\psi_j(t)= \eta (2^{-j}t)$, where $\eta(t) :=\psi(t) - \psi(2t)$, $t\in \R$. Thus, by Lemma \ref{fact 2}(ii), the operators $\psi_j(\Delta)$, $j\in \N_0$, restrict to uniformly bounded operators in $\cL(\Phi(X))$. Now the proof mimics the lines of the proof of the corresponding statement in \cite[Lemma 3.6]{Kr22}. Therefore, we omit it.\\

(ii') Since $F^{s,q}_{\Psi}(X) = B^{s,q}_\Psi(X)$ for every $\Psi=L^q_w$ with $q\in (1,\infty)$ and $w\in A_q(\T)$, by the point $(ii)$, the statement $(ii')$ holds for such spaces. The proof for a general $\Phi$ extrapolates from these particular ones via an adaptation of the Rubio de Francia iteration algorithm.

Let $\cR$ and $\cR'$ denote the following sublinear, positive operators defined on $\Phi$ and $\Phi'$, respectively: 

\begin{equation}\label{algo ops}
\cR g := \sum_{k=0}^\infty \frac{M_\T^k |g|}{(2\|M_\T\|_{\Phi})^k} \quad (g\in \Phi) \quad \textrm{ and } \quad
\cR' h := \sum_{k=0}^\infty \frac{M_\T^k |h|}{(2\|M_\T\|_{\Phi'})^k} \quad (h\in \Phi')  
\end{equation}

Here, $M_\T^k$ stands for the $k$-th iteration of the Hardy-Littlewood operator $M_\T$, $M_\T^0 := I$, and $\|M_\T\|_\Phi := \sup_{\|g\|_\Phi \leq 1}\|M_\T g\|_{\Phi}$. 
Since 
\[
M(\cR g ) \leq 2 \|M\|_\Phi \cR g  \quad (g\in \Phi) \quad \textrm{and} \qquad M(\cR' h) \leq 2 \|M\|_{\Phi'} \cR h \quad (h\in \Phi'),
\]
for every $g\in \Phi$ and $h\in \Phi'$ the functions $\cR g$ and $\cR'h$ belong to Muckenhoupt's class $A_1(\T)$. Consequently, for every $q\in (1,\infty)$, $g\in \Phi$, $g\neq 0$, and $h\in \Phi'$, $h\neq 0$,  the function 
\begin{equation}\label{weight}
w_{g,h,q}:=(\cR g)^{1-q} \cR' h
\end{equation}
is in Muckenhoupt's class $A_q(\T)$ with the $A_q$-constant 
\begin{equation}\label{constants}
[w_{g,h,q}]_{A_q}\leq 2^q \|M\|_{\Phi}^{q-1} \|M\|_{\Phi'}.  
\end{equation}
Moreover, note that if $g\in \Phi(X)$, then $f\in L^q_w(X)$ for $w:= (\cR|f|_X)^{1-q} \cR'h$ with an arbitrary $h\in \Phi'$, $h\neq 0$. Indeed, since $|g|_X\leq \cR|g|_X$, we get 
\[
\int_\R |g|_X^q w \ud t \leq \int_\R \cR|g|_X \cR' h \ud t \leq \|\cR |g|_X\|_\Phi \| \cR'h\|_{\Phi'}<\infty.
\] 
By Lemma \ref{fact 2} combined with Muckenhoupt's theorem (see e.g. \cite{BoKa97})  we get that if
\[
\mu_{w,q}: = \sup_{j\in \N_0} \|\psi_j(\Delta)\|_{\cL(L^q_w(X))},
\]
then $\sup_{w\in \cW} \mu_{w,q} < \infty$ for each $\cW\subset A_q(\T)$ with $\sup_{w\in \cW}[w]_{A_q}<\infty$.

Now we are in a position to apply Rubio de Francia's extrapolation argument. Fix $q\in (1,\infty)$. Let $f\in F^{s,q}_\Phi(X)$ and $f_N:= \sum_{j\leq N} \psi_j(\Delta)f$ for all $N\in \N$. Note that $f_N \in \cP(X)$ and 
\[
\|f-f_N\|_{F^{s,q}_\Phi(X)} \lesssim  \sum_{r=-1}^1 \Big\|\big( \sum_{j\geq N} 2^{jsq}|\psi_{j+r}(\Delta)\psi_j(\Delta)f|_X^q\big)^{1/q}\Big\|_{\Phi}.
\]
For each $r\in \{ \pm 1, 0\}$ set
\[
 g_r:= g_{N,r,f} := \big( \sum_{j\geq N} 2^{jsq}|\psi_{j+r}(\Delta)\psi_j(\Delta)f|_X^q\big)^{1/q},  \quad \textrm{and}
\]
\[
 g : =  g_{N,f} := \big( \sum_{j\geq N} 2^{jsq}|\psi_j(\Delta)f|_X^q\big)^{1/q}\quad (N\in \N).
\]
Let $h\in \Phi'$ with $h\neq 0$ and $w:= w_{g, h, q}= \cR(g)^{1-q} \cR' h$. Then, 
\begin{align*}
\mu_{w,q} \|\cR\|_{\cL(\Phi)} \|g\|_\Phi \|\cR'\|_{\cL(\Phi')} \| h\|_{\Phi'} & \geq \mu_{w,q} \|\cR g\|_\Phi \|\cR'h\|_{\Phi'} \\
& \geq \mu_{w,q} \left( \int_\T \cR g \cR' h \, \ud t \right)^\frac{1}{q} \left(  \int_\T \cR g \cR' h \, \ud t \right)^\frac{1}{q'}\\
&\geq  \left( \int_\T g_r^q w \,\ud t \right)^\frac{1}{q} \left(  \int_\T \cR g \cR' h \ud t \right)^\frac{1}{q'}\\
&\geq \int_\T g_r  \cR' h \, \ud t\\
&\geq \int_\T g_r |h| \, \ud t.
\end{align*}
Since 
\[
\sup\left\{ [w_{g_{N,f},h,q}]_{A_q}: f\in F^{s,q}_\Phi(X), N\in \N, h\in \Phi'\setminus \{0\} \right\} < \infty,
\]
we get that 
\[
\mu: = \sup\left\{ \mu_{w,q}: \, w=w_{g_{N,f}, h, q} \textrm{ with } f\in F^{s,q}_\Phi(X), N\in \N, h\in \Phi'\setminus \{0\} \right\} < \infty.
\]
 Therefore, by the Lorentz-Luxemburg theorem, we infer that $g_r \in \Phi(X)$ ($r\in \{\pm 1 ,0\})$ and for all $N\in \N$ we have 
\[
 \| g_{N,r,f} \|_\Phi  \leq \mu  \|\cR\|_{\cL(\Phi)} \|\cR'\|_{\cL(\Phi')} \|f-f_N\|_\Phi. 
\]
Since, by Fatou's property of $\Phi$, $f_N - f \rightarrow 0$ in $\Phi(X)$ as $N\rightarrow \infty$, we get the desired claim. 

$(iii)$ First note that for every $f\in \cD'(X)$ and $j\in \N_0$, $\partial f = \rho(\Delta) f$, where  $\rho(t) = it$ $(t\in\R)$,  and  
\begin{align*}
 \|2^{(s-1)j} \psi_j(\Delta)\partial f\|_{\Phi(X)} & =  \|2^{(s-1)j} (\rho\chi_j)(\Delta)\psi_j(\Delta)f\|_{\Phi(X)}\\
& \leq   \|2^{-j} (\rho\chi_j)(\Delta)\|_{\cL(\Phi(X))} \|2^{sj}\psi_{j}(\Delta)f\|_{\Phi(X)}.
\end{align*}
Moreover, if we set $\eta(t):= t\psi(\frac{1}{2}t) - t\psi(4t)$ $(t\in \R)$, then for every $j\geq 2$ and $t\in \R$, we have
\[
\eta (2^{-j}t) = -i 2^{-j}(\rho \chi_j)(t).
\]
Thus, by Lemma \ref{fact 2}, we infer that $\sup_{j\in \N_0} \|2^{-j} (\rho\chi_j)(\Delta)\|_{\cL(\Phi(X))} <\infty$, and consequently $\partial f \in B^{s-1,q}_\Phi(X)$ if $f\in B^{s,q}_\Phi(X)$.

 Now, suppose that $f\in \cD'(X)$ with  $\partial f \in B^{s-1,q}_{\Phi}(X)$. Let 
$\eta(t) := \frac{1}{t} \psi(\frac{1}{2}t) - \frac{1}{t} \psi(4t)$.  
 Then, for every $j\geq 2$ we have $2^j\chi_j(t) \frac{1}{\rho(t)} = -i \eta (2^{-j} t)$ $(t\in \R)$ and 
\begin{align*}
\|2^{sj}\psi_j(\Delta)f\|_{\Phi(X)} & \leq  \| 2^{j} (\chi_j \frac{1}{\rho})(\Delta)\|_{\cL(\Phi(X))} \| 2^{(s-1)j} \psi_j(\Delta) \rho(\Delta) f \|_{\Phi(X)} \\
& = \| \eta(2^{-j}\Delta)\|_{\cL(\Phi(X))} \| 2^{(s-1)j} \psi_j(\Delta) \partial f \|_{\Phi(X)}.
\end{align*}
Therefore, Lemma \ref{fact 2} shows that $f\in B^{s,q}_{\Phi}(X)$. It completes the proof. 
\end{proof}

\begin{remark}\label{norm equiv} (a) The norms $\|\cdot\|_{B^{s,q}_\Phi(X)}$(respectively,  $\|\cdot\|_{F^{s,q}_\Phi(X)}$) of course depend on the chosen function $\psi$ generating the resolution of the identity involved in the definition of the corresponding spaces $B^{s,q}_\Phi(X)$(respectively,  ${F^{s,q}_\Phi}(X)$). However, the arguments presented in the proof of Proposition \ref{per Besov}(ii)(respectively (ii')), show that the norms corresponding to different generating functions are equivalent when the Hardy-Littlewood maximal operator $M_\T$ is bounded on $\Phi$ (respectively, on $\Phi$ and its dual $\Phi'$). The fact that, in the case of Triebel-Lizorkin spaces, one cannot drop, in general, the boundedness of $M_\T$ on $\Phi'$ is well-known from the classical situation, i.e. $\Phi= L^\infty$; cf. \cite[Subsection 2.1.4]{Triebel78}. 

(b) Straightforward arguments show that $B^{-s,1}_\Phi(X^*)$ is separating on $B^{s,\infty}_{\Phi}(X)$ for every $s\in \R$. We apply it in Section \ref{section 5} below. 
\end{remark}

\section{Fourier multipliers}

\subsection{The representation formula for periodic multipliers}\label{per dist}

Let $X$ and $Y$ be Banach spaces.
By the representation result for the periodic distributions $\cD'(\T;X)$ (see, e.g. \cite[Proposition 2.1]{ArBu04}, as well as \cite[Chapter 12]{Ed67} or \cite[Section 3]{ScTr87} for its scalar-valued prototype), it is easily seen that for an arbitrary sequence $m:\ZZ \mapsto \cL(X,Y)$ with a polynomial growth, the following operator 
\begin{equation}\label{represent}
m(\Delta):\cD'(\T,X) \ni f \mapsto \sum_{k\in \ZZ} e_k\otimes m(k)\hat f(k) \in \cD'(\T, Y)
\end{equation}
is well-defined and continuous (the series is convergent in $\cD'(\T; Y)$). 

 Let $\Phi$ be a Banach function space over $(\T,\ud t)$ and 
\[
\E \in \{\Phi, B^{s,q}_\Phi, F^{s,r}_\Phi: s\in \R, q\in [1,\infty], r\in (1,\infty)  \}.
\]
A sequence $m : \ZZ \to \cL(X,Y)$ is called a {\it $\E$-Fourier multiplier} if  $m(\Delta)$ restricts to an operator in $\cL(\E(\T;X), \E(\T; Y))$. We set $\cM_\E(\T; X,Y)$ to denote the space of all $\cL(X,Y)$-valued $\E$-Fourier multipliers, and we simply write $\cM_p(\T; X,Y)$ for $\cM_{L^p}(\T; X,Y)$.

Now, let $\Psi$ be a Banach function space over $(\R,\ud t)$ and 
\[
\E \in \{\Psi, B^{s,q}_\Psi, F^{s,r}_\Psi: s\in \R, q\in [1,\infty], r\in (1,\infty)  \}.
\]
Let $m : \R \to \cL(X,Y)$ be a bounded measurable function and 
\[
D_m := \{f \in \cS'(\R; X) : \cF f \in L^1_{loc}(\R; X) \text{ and } m(\cdot)\cF f \in \cS'(\R; Y)\}
\]
Then, $m$ is called a $\E$-Fourier multiplier, if the following operator
\[
m(D): D_m \cap \E(\R;X)\ni f \mapsto \cF^{-1}( m(\cdot) \cF f) \in \cS'(\R;Y)
\]
extends to an operator in $\cL(\E(\R;X), \E(\R; Y))$. Similarly to the periodic case, we set $\cM_\E(\R; X,Y)$ to denote the space of all $\cL(X,Y)$-valued $\E$-Fourier multipliers, and we simply write $\cM_p(\R; X,Y)$ for $\cM_{L^p}(\R; X,Y)$.

\begin{remark}\label{representation}
Note that in a contrast to the representation formulas for Fourier multipliers on the real line, each periodic multiplier 
has a (canonical) representation formula  (cf. \cite[Section 4]{Kr22}). 
More precisely, since it is not obvious how to define, in general, the extension of the {\it pointwise multiplication} of a function $m \in L^\infty (\R; \cL(X,Y))$ with a distribution $f\in \cS'(\R; X)$, only in special cases one can show that the operator 
\[
m(D): \cS(\R; X)\ni f \mapsto \cF^{-1}( m(\cdot) \cF f) \in \cS'(\R;Y)
\]
extends to a continuous operator on whole $\cS'(\R; X)$. For instance, recall that by means of Schwartz' kernel theorem, such extension holds for each $m\in \cO_M(\R; \cL(X,Y))$, that is, slowly increasing, smooth functions. In this case, we have a formal expression for such extension given for all $f\in \cS'(\R;X)$ by
\[
m(D)f = \cF^{-1} \Theta(m, \cF f), 
\] 
where $\Theta$ is a hypocontinuous, bilinear map from $\cO_M(\R; \cL(X,Y))\times \cS'(\R; X)$ into $\cS'(\R; Y)$; see, e.g. Amann \cite[Theorem 2.1]{Am97}. 
Consequently, compared to the case of the real line, the study of periodic integro-differential equations is conceptually more straightforward at this point (cf. e.g. the presentation of \cite[Proposition 4.2]{Kr22} with Theorem \ref{ext th} below, see also \cite[Problems 3.2 and 3.3]{HyWe06}). 
 \end{remark}

\subsection{The multiplier conditions}\label{sec conditions} For the convenience of the reader we recall the notion of Marcinkiewicz's type conditions used in the next sections.

Let $X$ and $Y$ denote Banach spaces and $\gamma \in \N$. We say that a function $m: \R\rightarrow \cL(X,Y)$ satisfies the {\it Marcinkiewicz condition of order} $\gamma$ (in short, {\it the  $\m^\gamma$-condition}), if $m\in\cC^\gamma(\dot \R; \cL(X,Y))$ and 
\[
[m]_{\m^\gamma}:=\max_{l=0,...,\gamma}\sup_{t\neq 0} \|t^l m^{(l)}(t)\|_{\cL(X,Y)}<\infty \quad \quad  {(\frak M^\gamma)}.
\]
Then, we write $m\in \m^\gamma (\R;\cL(X,Y))$,

Furthermore, let $\widetilde{\frak M}^\gamma(\R;\cL(X,Y))$ denote a subclass of ${\frak M}^\gamma(\R;\cL(X,Y))$, which consists symbols $m$ such that 
\[
[m]_{\widetilde\m^\gamma}:=\max_{l=0,...,\gamma}\sup_{t\neq 0} \|(1+|t|)^l m^{(l)}(t)\|_{\cL(X,Y)}<\infty \quad \quad  {(\widetilde{\frak M}^\gamma)} 
\] 
and $m(0^+)=m(0^-)$ (equivalently, $m$ has continuous extension at $0$).

The discrete counterpart is defined analogously. Namely, a sequence $m : \mathbb{Z} \to \cL(X,Y)$ satisfies {\it the $\m^\gamma$-condition}, if  
\[
[m]_{\m^\gamma}:=\max_{l=0,...,\gamma}\sup_{k\in \mathbb{Z}} \|k^l (\Delta^l m)(k)\|_{\cL(X,Y)}<\infty,
\]
where $(\Delta^lm)(k) = \sum_{j=0}^{l} \binom{l}{j} (-1)^{l-j} m(k+j)$ for $l\in \N_0$. Similarly, we write $m \in \m^\gamma (\mathbb{Z};\cL(X,Y))$.

Furthermore, we say that $m: \ZZ \rightarrow \cL(X,Y)$ satisfies the {\it variational Marcinkiewicz condition}, if 
\[
[m]_{Var}:=\sup_{k\in \ZZ} \|m(k)\|_{\cL(X,Y)} + \sup_{j\geq 0} \sum_{2^j \leq |k| < 2^{j+1}} \|m(k+1) - m(k)\|_{\cL(X,Y)} <\infty.
\]
Set $Var(\ZZ; \cL(X,Y))$ to denote the space of all sequences which satisfy the above condition. Note that $ \m^1(\ZZ; \cL(X,Y)) \subset Var(\ZZ; \cL(X,Y)) $.  

Finally, we say that $m: \ZZ \to \cL(X,Y)$ satisfies the $\m^\gamma_{\cR}$-condition and write $m\in \m^\gamma_\cR(\ZZ; \cL(X,Y))$, if the sets $\{k^l\Delta^l m(k)\}$ are $\cR$-bounded for all $l\in \{ 0,1,...,\gamma\}$. We refer the reader, e.g.  to \cite{KuWe04} or \cite{DeHiPr03a}, for the background on $\cR$-boundedness.

\section{Transference results}\label{transf sec}

In this section we provide some crucial ingredients of the results presented in the following sections. These ingredients allow to transfer directly some results known in the real-line setting to the periodic one, rather than rephrasing the proofs from $\R$ to $\T$. 
Such transference methods reveal some natural questions, which maybe of independent interest; see e.g. Remark \ref{strategy}(a).

\subsection{The operator-valued counterpart of Jodeit's theorem}

The following result is an operator-valued counterpart of an extension of Jodeit's theorem (\cite[Theorem 3.5]{Jo70}) provided by Asmar et al; see\cite[Theorem 1.1]{AsBeGi94}. 

\begin{theorem}\label{Jodeit th} Let $X, Y$ and $Z$ be Banach spaces and  $p\in [1,\infty)$. Let $\Lambda \in L^\infty(\R; Y,Z)$ have a compact support. Then the following assertions are equivalent.
\begin{itemize}
\item [(i)] $\Lambda \in \cM_p(\R; Y,Z)$;

\item [(ii)]  For every $m \in \cM_p(\ZZ; X,Y)$ the function 
\begin{equation}\label{extension formula}
e(\Lambda, m) (t) := \sum_{n\in \ZZ} \Lambda (t - n) m(n)
\end{equation}
is in $\cM_p(\R; X,Z)$. 
\end{itemize}

\end{theorem}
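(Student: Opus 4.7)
The theorem is an operator-valued analogue of the Jodeit-type extension of Asmar-Berkson-Gillespie \cite{AsBeGi94}; my plan is to follow their strategy while handling the vector-valued setting carefully.

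For $(ii) \Rightarrow (i)$, fix $T \in \cL(X, Y)$ and take the rank-one sequence $m(n) := \delta_{n, 0} T$ (Kronecker delta at $0$ with value $T$). Its associated periodic operator $m(\Delta) f = T \hat f(0)$ is manifestly bounded on $L^p(\T; X)$, so $m \in \cM_p(\ZZ; X, Y)$, and moreover $e(\Lambda, m)(\xi) = \Lambda(\xi) T$ directly. Hence by $(ii)$, $\Lambda T \in \cM_p(\R; X, Z)$. Applying the closed graph theorem to the linear map $T \mapsto \Lambda T : \cL(X, Y) \to \cM_p(\R; X, Z)$ yields a uniform bound $\|\Lambda T\|_{\cM_p(\R; X, Z)} \leq C\|T\|$. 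Expressing simple tensors $g = \phi \otimes y \in L^p(\R; Y)$ as $g = Tf$ for the rank-one operator $T = \psi \otimes y$ (with $\psi \in X^*$, $\psi(x_0) = 1$) and $f = \phi \otimes x_0$, and closing by density, then gives $\Lambda \in \cM_p(\R; Y, Z)$.

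For $(i) \Rightarrow (ii)$, we first reduce by dilation to $\supp \Lambda \subset (-1/2, 1/2)$; this is justified by the standard invariance of multiplier norms under rescaling. For $f \in \cS(\R; X)$, the substitution $\eta = \xi - n$ in the Fourier inversion formula gives
\[
e(\Lambda, m)(D) f(x) = \sum_{n \in \ZZ} e^{2\pi i n x} \, \Lambda(D) \bigl[ m(n) \, e^{-2\pi i n \cdot} f \bigr](x).
\]
The summands have pairwise disjoint Fourier supports $n + \supp \Lambda \subset n + (-1/2, 1/2)$, so the sum admits the alternative representation
\[
e(\Lambda, m)(D) f(x) = \int_{-1/2}^{1/2} e^{2\pi i x \xi} \, \Lambda(\xi) \, \bigl( m(\Delta) g_\xi \bigr)(x) \, d\xi,
\]
where $g_\xi \in \cD'(\T; X)$ is the periodic distribution with $\hat g_\xi(n) := \hat f(\xi + n)$. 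The required $L^p(\R; Z)$-estimate then combines the $L^p(\T; X \to Y)$-boundedness of $m(\Delta)$, applied pointwise in $\xi$, with the $L^p(\R; Y \to Z)$-boundedness of $\Lambda(D)$.

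The main obstacle will be executing this transference cleanly: since the statement covers all $p \in [1, \infty)$ (including $p = 1$) and arbitrary Banach spaces $X, Y, Z$, Littlewood-Paley theory is unavailable in this generality. The compactness of $\supp \Lambda$ is essential here: it renders the sum $\sum_n \Lambda(\xi - n) m(n)$ locally finite in $\xi$, so that the transference reduces to pointwise-in-$\xi$ applications of the periodic multiplier $m(\Delta)$ followed by an $\R$-Fourier multiplier estimate via $\Lambda$. A Fubini-type argument should then produce the norm bound $\| e(\Lambda, m) \|_{\cM_p(\R; X, Z)} \lesssim \| \Lambda \|_{\cM_p(\R; Y, Z)} \cdot \| m \|_{\cM_p(\ZZ; X, Y)}$.
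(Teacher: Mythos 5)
Your two formal identities are correct, but neither direction is actually closed, and the gaps are at precisely the points where the classical proofs are hard.

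For $(i)\Rightarrow(ii)$, which is the substantive direction, your second representation
$e(\Lambda,m)(D)f(x)=\int_{-1/2}^{1/2}e^{2\pi i x\xi}\,\Lambda(\xi)\,(m(\Delta)g_\xi)(x)\,\ud\xi$ does not let you use either hypothesis in the way you describe. In this formula $\Lambda(\xi)$ enters only as a pointwise operator-valued weight in the $\xi$-integral, not as a Fourier multiplier on $\R$, so the assumption $\Lambda\in\cM_p(\R;Y,Z)$ is never invoked; and applying $\|m(\Delta)\|_{\cL(L^p(\T;X),L^p(\T;Y))}$ \emph{pointwise in} $\xi$ and then integrating fails for $p\neq 2$, because $\int_{-1/2}^{1/2}\|g_\xi\|_{L^p(\T;X)}^p\,\ud\xi$ is not comparable to $\|f\|_{L^p(\R;X)}^p$ (the Zak-transform identity you are implicitly using is an $L^2$ fact), and because $x\mapsto e^{2\pi ix\xi}(m(\Delta)g_\xi)(x)$ is quasi-periodic, hence not in $L^p(\R;Z)$, so no Minkowski/Fubini step produces the claimed bound. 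Disjointness of the frequency supports $n+\supp\Lambda$ only helps in $L^2$ or via Littlewood--Paley, which, as you note yourself, is unavailable here. The paper avoids this entirely by \emph{factoring the symbol}: after rescaling, $e(\Lambda(2^N\cdot),m)=\Lambda^{\sharp}\cdot e(\cF V,m)$, where $\Lambda^{\sharp}$ is the periodization of $\Lambda(2^N\cdot)$ (an $\R$-multiplier by an operator-valued version of de Leeuw's periodization theorem) and $e(\cF V,m)$ is handled by an operator-valued Jodeit theorem for the triangle function $\delta(t)=\max(0,1-|t|)$ via the de la Vall\'ee Poussin kernel. Your sketch bypasses both of these lemmas without supplying a substitute.

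For $(ii)\Rightarrow(i)$, your rank-one reduction gives $\|\Lambda(D)(\phi\otimes y)\|_{L^p(\R;Z)}\leq C\|\phi\|_{L^p}\|y\|_Y$, but "closing by density" from simple tensors does not work: an operator bounded on each simple tensor with a uniform constant need not be bounded on $L^p(\R;Y)$ for $p>1$ (summing over $k$ tensors costs a factor $k^{1/p'}$; the relevant dense set carries the $L^p(\R;Y)$ norm, not a projective tensor norm). The fix is the paper's one-line argument: take $X=Y$ and $m(0)=I_Y$, $m(k)=0$ otherwise, so that $e(\Lambda,m)=\Lambda$ directly.
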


\begin{remark}
Note that (according to the well-known results of Burkholder \cite{Bu81} and Bourgain \cite{Bo83}), if $Y$ is not $U\!M\!D$ space, then not for every compactly supported, scalar function $\lambda\in \cM_p(\R; \C)$, the function $\Lambda (t) := \lambda(t) I_Y$ is in $\cM_p(\R; \cL(Y))$ 
(here we assume that $Y=Z$ and put $I_Y$ for the identity operator on $Y$).
Moreover, since the proof of \cite[Theorem 1.1]{AsBeGi94} is based on several non-trivial ingredients (mainly from \cite{deLe65} and \cite{Jo70}), for the convenience of the reader we outline how the above-stated operator-valued extension of \cite[Theorem 1.1]{AsBeGi94} can be derived from its scalar counterpart. 
\end{remark}

\begin{proof}
Following \cite{AsBeGi94} we split the proof into two steps.
In the first one, we need to show that if $\Lambda$ is a function in $\cM_p(\R; Y,Z)$ with 
$\supp \Lambda \subset [-\frac{1}{4}2^N, \frac{1}{4}2^N ]$ for some $N\in \N$, then the periodic extension of $\Lambda(2^{N} \cdot)$ on $\R$ from $[-1, 1]$, i.e. the function 
$\Lambda^\sharp (t):= \sum_{n\in \ZZ} \Lambda(2^{N}(t- n))$,  is in $\cM_p(\R; Y,Z)$ as well. 

Since, as one can directly check the function $\Lambda(2^{N} \cdot)$ is in $\cM_p(\R; Y,Z)$ with 
\[
\|\Lambda(2^{N}\cdot)\|_{\cM_p(\R; Y,Z)} = \|\Lambda \|_{\cM_p(\R; Y,Z)},
\]
 the proof of the above statement would follow from a direct operator-valued variant of de Leeuw's \cite[Theorem 4.5]{deLe65}. Such variant can be obtained following essentially the lines of the proof of \cite[Theorem 4.5]{deLe65}. 
 For the proof of one of its main ingredients, i.e. \cite[Proposition 4.2]{deLe65}, it seems to be more convenient for a given function $\phi \in \cC_c(\R; X)$ and $N\in \N$, to define $\lambda_N$ as an element of $L^p(\mathbf{D}; X)$ by 
\[
\lambda_N(t):= N^{-1/p} \phi(t/N) \chi_{\frac{1}{N}\ZZ}(t)  \qquad (t\in \mathbf{D}). 
\]
Here, $\mathbf{D}$ stands for the discrete real line $\R$.
Then, if $\mu = (\mu(n)))_{n\in \ZZ}$ is a sequence in $l^p(\ZZ; \cL(X,Y))$ with finitely many non-zero elements, which we consider as an element of $L^p(\mathbf{D}; \cL(X,Y))$, then 
\begin{align*}
\| \mu \star \lambda_N\|^p_{L^p(\mathbf{D}; \cL(X,Y))} & = \sum_{t\in \mathbf{D}} \left| \mu\star \lambda_N (t)\right|^p_Y = \sum_{n\in \ZZ} N^{-1} \left| \sum_{k\in \ZZ} \mu(k)\phi(\frac{n}{N} - k)   \right|^p_Y.
\end{align*}
Moreover, in the context of \cite[Corollary 4.3]{deLe65}, if we set $r(t):=\sum_{k\in \ZZ} \mu(k) e^{ikt}$, $t\in \R$, then 
\[
\cF^{-1}( r(\cdot) \cF \phi) (t) = \sum_{k\in \ZZ} \mu(k)\phi(t-k) \qquad (t\in \R).
\]
Therefore, it is readily seen that 
\[
\|\mu \star \lambda_N\|_{L^p(\mathbf{D}; Y)} \rightarrow \| \cF^{-1}( r(\cdot) \cF \phi)\|_{L^p(\R; Y)} \qquad \textrm{ as } N\rightarrow \infty
\] for every $\cC_c(\R; X)$. 
It gives the desired claim about $\Lambda^\sharp$.

For the second step, we need the following operator-valued counterpart of Jodeit's result, \cite[Theorem 3.5]{Jo70}: \\
{\it Let $\delta(t): = \max(0, 1-|t|)$,  $t\in \R$. Then, 
for arbitrary Banach spaces $X$ and $Y$ and every $m\in \cM_p(\ZZ;X,Y)$ the function 
$e(\delta,m)$ is in $\cM_p(\R;X,Y)$.}
The proof of this statement follows almost verbatim the lines of the proof of its scalar prototype \cite[Theorem 3.5]{Jo70} (see also Remark \ref{rem on Jodeit} below).

Now the implication $(i)\Rightarrow (ii)$, follows from this special variant as in the proof of \cite[Theorem 1.1]{AsBeGi94}. Namely, since Fourier's transform of the de la Vall\'ee Poussin kernel ${V}$ is a linear combination of the dilations of $\delta$, i.e. $\cF V:= 2\delta (2\cdot) - \delta (4\cdot)$,  a direct rescaling argument shows that $e(\cF{{V}}, m)$ is in $\cM_p(\R; X,Y)$ for every $m\in \cM_p(\ZZ; X,Y)$. 
Furthermore, since $e(\Lambda(2^{N}\cdot), m) = \Lambda^\sharp e(\cF{V}, m)$
is in $\cM_p(\R; X,Z)$, again, a rescaling argument yields $(ii)$. 

Finally, note that for $X=Y$ and the sequence $m:\ZZ\rightarrow \cL(Y)$ with $m(0):=I_Y$ and $m(k):=0$ for $k\neq 0$, then $e(\Lambda, m) = \Lambda$. Here, $I_Y$ stands for the identity operator on $Y$. It gives the converse implication, that is $(ii)\Rightarrow (i)$, and completes the proof.  
\end{proof}

\begin{remark}\label{rem on Jodeit}
In the sequel, we apply Theorem \ref{Jodeit th} only in the case when $\Lambda = \lambda I_Y$ and $Y=Z$ for scalar functions $\lambda \in \cC^\infty_c(\R)$. It is natural to reduce the above proof only to its second step, i.e. to a direct proof of such special operator-valued variant of Jodeit's theorem.  
Compared to the proof of \cite[Theorem 3.5]{Jo70}, where $\lambda = \delta$, 
it is not clear how to show  that the sum of $l^1$-norms of Fourier's coefficients of the functions $h_k: \T \rightarrow \C$ given by $h_k (e^{it}) := (\cF^{-1} \lambda) (t+ \pi k)$ for $t\in [-\frac{\pi}{2}, \frac{\pi}{2}]$ and $0$ elsewhere, is finite, i.e. $\sum_{n,k\in \ZZ} |\widehat h_k (n)| < \infty$. 
\end{remark}

For the proof of our extrapolation result, Theorem \ref{ext th} below, we need a refined version of the above operator-valued extension of Jodeit's theorem. Roughly speaking, for a given multiplier sequence $m\in \cM_p(\ZZ; X,Y)$ satisfying  the  Marcinkiewicz condition $\m^3$ on $\T$, we are looking for a {\it summability kernel} $\Lambda=\lambda(\cdot) I_Y$ such that the function $e(\Lambda, m)$ satisfies the corresponding Marcinkiewicz condition $\m^3$ on $\R$. As can be readily seen, not every function $\lambda\in \cC^\infty_c(\R)$ yields such an extension of $m$, and a more suitable choice of $\lambda$ has to be made.

An extension procedure which preserves the first and second order Marcinkiewicz conditions  was provided in \cite[Theorem 5.7.9]{HNVW16} (piecewise affine extension, that is, corresponding to Jodeit's summability kernel $\delta$) and in \cite[Lemma 3.5]{ArBaBu04}, respectively. The following result provides an alternative and unified argument for such extension procedures. 

\begin{theorem}\label{variant of Jodeit's th} Let $X$ and $Y$ be arbitrary Banach spaces.
Then, for every $\gamma \in \{1,2,3\}$ there exists a function $\lambda \in C_c^{\infty}(\R;\C)$  such that for each $m\in \m^\gamma(\ZZ; X,Y)$ the function $e(\lambda,m) :=  \sum_{k\in \ZZ} \lambda (\cdot - k) m(k)$ is an extension of $m$, which belongs to $\m^{\gamma} (\R;\cL(X,Y))$.
\end{theorem}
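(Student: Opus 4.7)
The plan is to exhibit a single $\lambda \in C_c^\infty(\R;\C)$ that satisfies (I) an interpolation property on $\ZZ$, namely $\lambda(n)=\delta_{n,0}$, and (II) a Strang-Fix condition of order $\gamma$, namely $\sum_{k\in\ZZ}p(k)\lambda(t-k)=p(t)$ for every $t\in\R$ and every polynomial $p$ of degree $\le\gamma-1$ (equivalently $\sum_{k}(t-k)^{j}\lambda(t-k)=\delta_{j,0}$ for $0\le j\le\gamma-1$). Once such $\lambda$ is in hand, (I) immediately gives $e(\lambda,m)(n)=\sum_{k}\lambda(n-k)m(k)=m(n)$ for $n\in\ZZ$, so $e(\lambda,m)$ extends $m$, and compactness of $\supp\lambda$ makes $e(\lambda,m)\in C^{\infty}(\R;\cL(X,Y))$. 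The remaining task is the bound on $|t|^l\|(e(\lambda,m))^{(l)}(t)\|$ for $l\in\{0,\ldots,\gamma\}$.

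To verify the $\m^\gamma$ bound, fix $l$ and $t\neq 0$. If $|t|\leq M:=2N_0+1$ (where $\supp\lambda\subset[-N_0,N_0]$), then $(e(\lambda,m))^{(l)}(t)=\sum_{k}\lambda^{(l)}(t-k)m(k)$ has uniformly boundedly many nonzero terms, each controlled by $\|\lambda^{(l)}\|_\infty\,[m]_{\m^\gamma}$, and $|t|^{l}\le M^{l}$. For $|t|>M$, let $n\in\ZZ$ be the nearest integer to $t$, so $|n|\sim|t|$ and every contributing $k$ satisfies $|k-n|\le N_{0}$ and $|k|\sim|t|$. Newton's discrete Taylor formula yields
\[
m(k)=\sum_{j=0}^{\gamma-1}\binom{k-n}{j}\Delta^{j}m(n)+R_{\gamma}(k,n),\qquad \|R_{\gamma}(k,n)\|\le C\,|k-n|^{\gamma}\!\!\max_{\xi\text{ between }n,k}\!\!\|\Delta^{\gamma}m(\xi)\|\lesssim\frac{[m]_{\m^{\gamma}}}{|t|^{\gamma}},
\]
using $\|\Delta^{\gamma}m(\xi)\|\le[m]_{\m^\gamma}/|\xi|^{\gamma}$ and $|\xi|\sim|t|$. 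Substituting into the defining sum and differentiating property (II) $l$ times in $t$ gives $\sum_{k}\binom{k-n}{j}\lambda^{(l)}(t-k)=\bigl(\binom{t-n}{j}\bigr)^{(l)}$, which vanishes if $l>j$ and otherwise is a polynomial in $t-n$ of degree $j-l$, hence bounded uniformly for $|t-n|\le 1/2$. Thus
\[
(e(\lambda,m))^{(l)}(t)=\sum_{j=l}^{\gamma-1}\Delta^{j}m(n)\left(\binom{t-n}{j}\right)^{(l)}+\sum_{k}\lambda^{(l)}(t-k)R_{\gamma}(k,n),
\]
and since $\|\Delta^{j}m(n)\|\le[m]_{\m^\gamma}/|n|^{j}\lesssim[m]_{\m^\gamma}/|t|^{j}\le[m]_{\m^\gamma}/|t|^{l}$ for $j\ge l$, while the remainder contributes $\lesssim[m]_{\m^\gamma}/|t|^{\gamma}\le[m]_{\m^\gamma}/|t|^{l}$, we conclude $|t|^{l}\|(e(\lambda,m))^{(l)}(t)\|\lesssim[m]_{\m^\gamma}$, as required.

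The one delicate point is the actual construction of $\lambda$ in Step~1: an arbitrary smooth bump will generally fail either (I) or (II). I would search within an ansatz $\lambda=\sum_{|k|\le K}a_{k}\phi(\cdot-k)$, where $\phi\in C_c^{\infty}$ has sufficiently small support to separate integer translates, and fix the coefficients $a_{k}$ by imposing the finite linear system (I)+(II). The number of equations is $O(\gamma+K)$ while the number of unknowns is $2K+1$, so for $\gamma\in\{1,2,3\}$ and $K$ sufficiently large the system admits a solution, producing $\lambda\in C_c^{\infty}$ with $\supp\lambda\subset[-K-1,K+1]$. This finite-dimensional feasibility is the main technical obstacle; the Marcinkiewicz estimate itself is then an almost mechanical consequence of Strang-Fix combined with the discrete Taylor expansion above.
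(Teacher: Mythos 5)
Your second step --- the Marcinkiewicz estimate assuming a kernel $\lambda$ with the interpolation property (I) and the Strang--Fix property (II) --- is sound, and it is in fact a cleaner and more scalable version of what the paper does: the paper obtains the same cancellations by writing out $se'$, $s^2e''$, $s^3e'''$ explicitly for $\supp\lambda\subset[-1,3]$ and imposing the linear constraints \eqref{lambda 1}--\eqref{lambda 3} on $\lambda'$, $\lambda''$, $\lambda'''$, which are precisely the differentiated forms of your polynomial-reproduction identities. Your discrete-Taylor formulation would work verbatim for any $\gamma\in\N$, not just $\gamma\le 3$ (modulo stating the backward Newton remainder for $k<n$, which is routine).

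The genuine gap is exactly where you flag it: the existence of $\lambda$. Your ansatz $\lambda=\sum_{|k|\le K}a_k\phi(\cdot-k)$ with $\phi$ supported so narrowly that the integer translates are disjoint cannot work. With disjoint supports, condition (I) reads $a_n\phi(0)=\delta_{n,0}$, which forces $a_n=0$ for $n\ne 0$, so $\lambda$ is a single small bump; then $\sum_k\lambda(t-k)$ vanishes on the gaps between the translates and (II) already fails for $p\equiv 1$. More fundamentally, (II) is not a finite system of scalar equations in the $a_k$: it is the requirement that the function $t\mapsto\sum_k p(k)\lambda(t-k)-p(t)$ vanish identically on $\R$, and under the separated-support ansatz this function is forced to equal $-p(t)$ on a set of positive measure. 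The dimension count ``$O(\gamma+K)$ equations versus $2K+1$ unknowns'' therefore does not apply, and even where it would, an underdetermined inhomogeneous linear system need not be consistent. Since the whole content of the theorem is the existence of such a $\lambda$ (the estimate being, as you say, mechanical once $\lambda$ is in hand), this is a real hole rather than a cosmetic one. It can be repaired --- the paper does so by prescribing $\lambda$ arbitrarily on $[-1,0]$ with suitable boundary data and then extending it to $[0,3]$ by explicit polynomial-correction formulas so that \eqref{lambda 1}--\eqref{lambda 3} and $\lambda(0)=1$, $\lambda(1)=\lambda(2)=0$ hold --- but some such concrete construction (or a correct abstract existence argument, e.g.\ via the Fourier-side characterization of Strang--Fix) must be supplied.
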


\begin{proof} 
We give the proof for $\gamma =3$. The proofs for $\gamma = 1$ and $\gamma = 2$ can be readily extracted from that one (see also Remark \ref{rem extension}).

 First, note that for any function $\lambda\in \cC_c^\infty$ with $\supp{\lambda} \subset [-1,3]$ and any sequence $m:\ZZ\rightarrow \cL(X,Y)$, the function $e: =e(\lambda,m)$ can be expressed as follows:
\[
e(s) = m(k_s)\lambda(u_s) + m(k_s-1)\lambda(u_s+1) + m(k_s-2)\lambda(u_s+2) +m(k_s-3)\lambda(u_s+3),
\]
where $k_s := \lfloor{s}\rfloor+1$, $u_s := s-\lfloor{s}\rfloor-1 \in [-1,0]$, and $s\in \R$. 
Moreover,  straightforward arguments show that
\begin{align*}
se'(s) = & s (\Delta m) (k_s-1) \lambda'(u_s) + s (\Delta m) (k_s-2) \Big(\lambda'(u_s)+\lambda'(u_s+1)\Big)\\
& \quad  + s (\Delta m)(k_s-3) \Big(\lambda'(u_s) + \lambda'(u_s+1) + \lambda'(u_s+2)\Big)\\
&  \quad +  s\, m(k_s-3)\Big( \lambda'(u_s)+\lambda'(u_s+1) + \lambda'(u_s+2) + \lambda'(u_s+3)\Big),
\end{align*}
\begin{align*}
s^2e''(s) =   & s^2 (\Delta^2 m) (k_s-2) \lambda''(u_s) + s^2 (\Delta m) (k_s-3)\Big( 2\lambda''(u_s)+\lambda''(u_s+1) \Big)\\ 
& \quad + s^2 m(k_s-2)\Big( 3\lambda''(u_s) + 2\lambda''(u_s+1)+ \lambda''(u_s+2) \Big)\\
& \quad + s^2 m(k_s-3)\Big( -2\lambda''(u_s) -\lambda''(u_s+1)+\lambda''(u_s+3) \Big),
\end{align*} and 
\begin{align*}
s^3e'''(s) =     & s^3 (\Delta^3 m )(k-3) \lambda'''(u_s) + s^3 m (k_s-1) \Big( 3\lambda'''(u_s) +  \lambda'''(u_s+1) \Big)\\
& \quad + s^3 m(k_s-2) \Big( -3\lambda'''(u_s) + \lambda'''(u_s+2) \Big)\\
& \quad +  s^3 m(k_s-3)\Big( \lambda'''(u_s) + \lambda'''(u_s+3) \Big).
\end{align*}
Notice that there exist a constant $c>0$ such that $|s| < c|(k_s - j)|$, $s^2 < c(k_s - j)^2$ and $|s|^3 < c|(k_s - j)|^3$ for $j = 0,1,2$ and $|s|$ sufficiently large. 

Therefore,  if for all $u \in [-1,0]$ the function $\lambda$ satisfied
\begin{equation}\label{lambda 1}
\lambda'(u) + \lambda'(u+1) + \lambda'(u+2) + \lambda'(u+3) = 0,
\end{equation}
 then $m \in  \m^3(\ZZ; \cL(X,Y) ) \subset \m^1(\ZZ; \cL(X,Y))$ would clearly yield the boundedness of $e$ and $(\cdot)e'$.

Similarly, if for all $u \in [-1,0]$ the function $\lambda$ satisfied
\begin{equation}\label{lambda 2}
\begin{cases}
3\lambda''(u) + 2\lambda''(u+1) + \lambda''(u+2) = 0,\\
-2\lambda''(u) -\lambda''(u+1) + \lambda''(u+3) = 0,
\end{cases}
\end{equation}
 then  $m \in \m^3 (\ZZ; \cL(X,Y))\subset \m^2 (\ZZ; \cL(X,Y))$ would imply the boundedness of~$(\cdot)^2 e''$.

Finally,  if for all $u \in [-1,0]$ the function $\lambda$ satisfied
\begin{equation}\label{lambda 3}
\begin{cases}
3\lambda'''(u) + \lambda'''(u+1) = 0,\\
-3\lambda'''(u) + \lambda'''(u+2) = 0,\\
\lambda'''(u) + \lambda'''(u+3) = 0,
\end{cases}
\end{equation}
then $m \in \m^3(\ZZ; \cL(X,Y))$ would give that $(\cdot)^3 e'''$ is bounded.

Notice that the conditions stated in \eqref{lambda 1}, \eqref{lambda 2} and \eqref{lambda 3} are not mutually contradictory. Indeed, one can check that $\eqref{lambda 3}\Rightarrow \eqref{lambda 2}\Rightarrow\eqref{lambda 1}$. Because $e$ must be an extension of $m$, we have to additionally ensure that $\lambda(0) = 1$, $\lambda(1) = 0$, $\lambda(2) = 0$.

To construct a function $\lambda$, which obeys all of the above conditions, on the interval $[-1,0]$ we take $\lambda$ as any smooth function with 
\[\lambda(-1) = 0,\quad \lambda(0) = 1,\quad \lambda'(-1) = 0,\quad \lambda'_{-}(0) = 3/2,\quad \lambda''_{-}(0) = 1,\quad \lambda^{(j)}(k) = 0
\]
for $j \geq 3$ and $k \in \{-1,0\}$. Then, on the interval $[0,3]$, we define $\lambda$ in the following way: 
\begin{align*}
\lambda(u+1) & := -3\lambda(u) + \frac{(u+1)^2}{2} + \frac{3}{2}(u+1) + 1,\\
\lambda(u+2) & := 3\lambda(u) - 2\frac{(u+1)^2}{2} - 2(u+1),\\
\lambda(u+3) & := -\lambda(u) + \frac{(u+1)^2}{2} + \frac{1}{2}(u+1) \qquad (u\in [0,1]). 
\end{align*}
Now it is straightforward to check that for every $u \in [-1,0]$ we have 
\[
\lambda'(u+1) = -3\lambda'(u) + u + \frac{5}{2}, \quad \lambda'(u+2) = 3\lambda'(u) - 2u - 4, \quad \lambda'(u+3) = -\lambda'(u) + u + \frac{3}{2},
\]
\[
\lambda''(u+1) = -3\lambda''(u) + 1, \quad \lambda''(u+2) = 3\lambda''(u) -2, \quad \lambda''(u+3) = -\lambda''(u) + 1,
\]
\[
\lambda'''(u+1) = -3\lambda'''(u), \quad  \lambda'''(u+2) = 3\lambda'''(u),\quad  \lambda'''(u+3) = -\lambda'''(u). 
\]

Using the above expressions for $\lambda'$, $\lambda''$ and $\lambda'''$, it is easy to verify that such function $\lambda$ satisfies \eqref{lambda 1}, \eqref{lambda 2} and \eqref{lambda 3}. The additional conditions imposed on derivatives of $\lambda$ at $0$ ensure that $\lambda$ is smooth. It completes the proof.
\end{proof}

\begin{remark}\label{rem extension} The function $\lambda$ which the existence is proven  above for $\gamma = 3$ works as well for $\gamma = 1$ and $\gamma = 2$. However, note that if we are interested merely in the case when $\gamma = 1$ or $\gamma =2$, then one can simplify the corresponding parts of the above proof by considering $\lambda$ with support in $[-1,1]$ or $[-1,2]$, respectively.
\end{remark}

\subsection{De Leeuw's couples}

Let $\Phi= \Phi(\T)$ and $\Psi= \Psi(\R)$ be two Banach function spaces over $(\T, \ud t)$ and $(\R, \ud t)$, respectively. 
If for every $m\in \cM_\Psi(\R; X,Y)$ such that each $k\in \ZZ$ is a Lebesgue point of $m$ its restriction to $\ZZ$ is in $\cM_\Phi(\ZZ; X,Y)$, then we call $(\Phi, \Psi)$ {\it de Leeuw's couple}. 

The classical de Leeuw theorem \cite{deLe65} (see also its operator-valued counterpart \cite[Theorem 5.93]{HNVW16}) shows that the couples $(L^p(\T;X), L^p(\R;X))$, $p \in [1,\infty)$, have such property. 
Moreover, some further extensions of de Leeuw's result done in the literature provide other examples of such couples; see e.g. \cite{BLVi13} and the references therein. 
In particular, for our further purposes we need the following operator-valued variant of a weighted extension of de Leeuw's theorem due to Berkson and Gillespie \cite[Theorem 1.2]{BeGi03}. 

\begin{lemma}\label{weighted Lp} Let $p\in (1,\infty)$. Let $\widetilde w$ be a $2\pi$-periodic weight and $w(e^{it}):=\widetilde w(t)$ for $t\in\R$. Then, the pair $(L^p_w(\T), L^p_{\widetilde w}(\R))$ is de Leeuw's couple. 

More precisely, for every Banach spaces $X$ and $Y$, if $m\in \cM_{L^p_{\widetilde w}}(\R; X,Y)$   and each point $k\in \ZZ$ is a Lebesgue point of $m$, then 
\[
\|m_{|\ZZ}(\Delta)\|_{\cL(L^p_w(\T;X), L^p_w(\T;Y))}\leq \|m(D)\|_{\cL(L^p_{\widetilde w}(\R;X), L^p_{\widetilde w}(\R;Y))}. 
\]
\end{lemma}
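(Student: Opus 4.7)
The plan is to adapt the classical de Leeuw transference scheme \cite{deLe65} to the periodic-weighted setting of \cite[Theorem 1.2]{BeGi03}, noting that the resulting argument extends directly to operator-valued symbols. I would fix $m \in \cM_{L^p_{\widetilde w}}(\R;X,Y)$ whose Lebesgue set contains $\ZZ$, put $T := m|_\ZZ(\Delta)$, and (using density of $\cP(\T;X)$ in $L^p_w(\T;X)$) reduce the estimate to the case of trigonometric polynomials $f = \sum_{|k|\leq N} e_k\otimes x_k$. The core idea is to extend $f$ to $\R$ via a slowly spreading cut-off and then let the cut-off approach the constant $1$.

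Concretely, I would pick $\phi \in \cC^\infty_c(\R)$ with $\phi(0)=1$ and $\supp\phi\subset[-K,K]$, set $\phi_\epsilon := \phi(\epsilon\,\cdot)$ and $F_\epsilon := \phi_\epsilon\,\widetilde{f}$ with $\widetilde{f}$ the $2\pi$-periodic extension of $f$, and let $\epsilon\to 0^+$. A direct Fourier computation using $\cF F_\epsilon(\xi) = \sum_k x_k\,\cF\phi_\epsilon(\xi-k)$ together with the identity $\int_\R e^{it\epsilon\zeta}\widehat\phi(\zeta)\,d\zeta = 2\pi\phi(\epsilon t)$ would give the decomposition
\[
m(D)F_\epsilon = \phi_\epsilon\,\widetilde{Tf} + E_\epsilon,
\]
where
\[
E_\epsilon(t) := \frac{1}{2\pi}\sum_{|k|\leq N} e^{itk}\,x_k \int_\R e^{it\epsilon\zeta}\bigl(m(\epsilon\zeta+k)-m(k)\bigr)\widehat\phi(\zeta)\,d\zeta.
\]
Since each $k\in\ZZ$ is a Lebesgue point of $m$, the substitution $y = \epsilon\zeta$ turns the inner integral into a convolution of $m(\cdot+k)-m(k)$ against the approximate identity $\epsilon^{-1}\widehat\phi(\cdot/\epsilon)$, and the standard estimate yields $\|E_\epsilon\|_{L^\infty(\R;Y)}\to 0$ as $\epsilon\to 0^+$ (uniformly in $t$, as the oscillatory factor $e^{it\epsilon\zeta}$ has modulus one).

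The $2\pi$-periodicity of the locally integrable functions $|\widetilde{f}|_X^p\widetilde w$ and $|\widetilde{Tf}|_Y^p\widetilde w$ would then yield, by the substitution $u=\epsilon t$ and weak-$\ast$ convergence of rapidly oscillating periodic integrands tested against $|\phi|^p \in L^1(\R)$, the equidistribution asymptotics
\begin{align*}
\epsilon\|F_\epsilon\|^p_{L^p_{\widetilde w}(\R;X)} &\longrightarrow \tfrac{1}{2\pi}\|\phi\|^p_{L^p(\R)}\|f\|^p_{L^p_w(\T;X)},\\
\epsilon\|\phi_\epsilon\widetilde{Tf}\|^p_{L^p_{\widetilde w}(\R;Y)} &\longrightarrow \tfrac{1}{2\pi}\|\phi\|^p_{L^p(\R)}\|Tf\|^p_{L^p_w(\T;Y)}.
\end{align*}
Further, since $\phi_\epsilon\widetilde{Tf}$ vanishes outside $[-K/\epsilon, K/\epsilon]$ and $\int_{|t|\leq K/\epsilon}\widetilde w\,dt \lesssim \epsilon^{-1}\|w\|_{L^1(\T)}$ by periodicity, one has $\epsilon\int_{|t|\leq K/\epsilon}|E_\epsilon|^p_Y\widetilde w\,dt \lesssim \|E_\epsilon\|^p_\infty \to 0$. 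Restricting the multiplier inequality $\|m(D)F_\epsilon\|_{L^p_{\widetilde w}} \leq \|m(D)\|\|F_\epsilon\|_{L^p_{\widetilde w}}$ to the inner region $[-K/\epsilon,K/\epsilon]$, applying the $L^p_{\widetilde w}$-triangle inequality with the decomposition above, multiplying by $\epsilon^{1/p}$, and passing to the limit would then yield the required bound (the outer tail $\int_{|t|>K/\epsilon}|m(D)F_\epsilon|^p\widetilde w\,dt$ only contributes a non-negative term that can be discarded).

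The delicate step I expect to be the main obstacle is the equidistribution asymptotic: although conceptually standard, its rigorous verification for a general $2\pi$-periodic weight in $L^1_{\mathrm{loc}}(\R)$ (which need not be bounded and whose global $L^1$-norm is infinite) requires some care in setting up the weak-$\ast$ convergence of the dilated periodic densities against the compactly supported test function $|\phi|^p$. The remaining pieces---the Fourier-theoretic decomposition of $m(D)F_\epsilon$, the uniform approximate-identity bound on $E_\epsilon$, and the final triangle-inequality step---are routine, and it is precisely in these routine steps that the scalar argument of \cite{BeGi03} extends to operator-valued symbols without modification.
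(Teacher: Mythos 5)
Your proposal is correct, but it follows the classical ``direct'' de Leeuw transference scheme, whereas the paper argues by duality. In the paper's proof one pairs $m(\Delta)f$ against a trigonometric polynomial $g$ with values in $Y^*$, writes the pairing as $\lim_{\epsilon\to 0^+}\epsilon\int_\R\langle m(D)(\phi(\epsilon\cdot)f),\psi(\epsilon\cdot)g\rangle\,\ud t$ for Gaussian cut-offs $\phi,\psi$ (invoking \cite[Lemma 5.94]{HyNeVeWa17} for this identity, which is where the Lebesgue-point hypothesis is consumed), applies H\"older with the dual weight $\widetilde w^{1-q}$ and a periodization of $\epsilon\sum_n|\phi(\epsilon(2\pi n+t))|^p$, and concludes because $L^q_{w^{1-q}}(Y^*)$ is norming for $L^p_w(\T;Y)$. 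You instead work entirely on the ``primal'' side: compactly supported cut-off, explicit decomposition $m(D)F_\epsilon=\phi_\epsilon\widetilde{Tf}+E_\epsilon$ with the Lebesgue-point hypothesis absorbed into the uniform smallness of $E_\epsilon$, and a reverse triangle inequality on $[-K/\epsilon,K/\epsilon]$. Your route avoids any appeal to the dual weighted space (and hence to its norming property, which for a completely general periodic weight requires a word of justification), at the cost of having to handle the error term and the restriction to the support by hand; the paper's route outsources the Lebesgue-point analysis to a quotable lemma and keeps the weight manipulations inside a single H\"older estimate. One small point in your write-up: the ``equidistribution'' step should not be phrased as weak-$*$ convergence of the dilated periodic density tested against $|\phi|^p\in L^1$, since $|\widetilde f|_X^p\widetilde w$ need not be bounded; the correct (and easy) argument is the periodization you implicitly use elsewhere, namely $\epsilon\int_\R|\phi(\epsilon t)|^ph(t)\,\ud t=\int_{-\pi}^{\pi}h(t)\,\bigl(\epsilon\sum_{n}|\phi(\epsilon(t+2\pi n))|^p\bigr)\ud t$ with $h:=|\widetilde f|_X^p\widetilde w\in L^1(-\pi,\pi)$, followed by uniform convergence of the uniformly bounded Riemann sums to $(2\pi)^{-1}\|\phi\|_{L^p}^p$ and dominated convergence. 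With that adjustment your argument is complete.
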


The proof can be obtained by an adaptation of the proof of its scalar counterpart; see e.g. the presentation given in \cite{AnMo09}. Below, we propose a slightly different approach. 

\begin{proof} 
Let $\phi(t):= e^{-\frac{\pi}{p}t^2}$ and $\psi(t):= e^{-\frac{\pi}{q}t^2}$ for $t\in \R$ and $\frac{1}{p} + \frac{1}{q} = 1$.  
As in \cite[Lemma 5.94]{HyNeVeWa17} one can show that
\[
\int_{[-\pi, \pi]}\langle m(\Delta)f (t), g(t)\rangle_{Y,Y^*} \ud t = \lim_{\epsilon\rightarrow 0^+} \epsilon \int_{\R} \langle m(D)(\phi(\epsilon \cdot) f)(t), (\phi(\epsilon \cdot) g)(t) \rangle_{Y,Y^*} \ud t
\]
for all trygonometric polynomials $f:\T \to X,g:\T \to Y^{*}$.
Furthermore, note that 
\begin{align*}
\epsilon \Big|\int_{\R}   \langle m(D)&(\phi(\epsilon \cdot) f)(t),  (\psi(\epsilon \cdot) g)(t) \rangle_{Y,Y^*} \ud t \Big| \\
&  \leq \epsilon \|m(D)(\phi(\epsilon \cdot) f)\|_{L^p_{\widetilde w}(\R;Y)} \|(\psi(\epsilon \cdot) g)\|_{L^q_{\widetilde w^{1-q}}(\R; Y^*)} \\
& \leq \|m(D)\| \epsilon^{\frac{1}{p}}\|\phi(\epsilon \cdot) f)\|_{L^p_{\widetilde w}(\R; X)} \epsilon^{\frac{1}{q}} \|(\psi(\epsilon \cdot) g)\|_{L^q_{\widetilde w^{1-q}}(\R; Y^*)} \\
&  \leq \|m(D)\| \Big(\frac{1}{2\pi}\int_{-\pi}^{\pi} 2 \pi \epsilon \sum_{n \in \ZZ} |\phi(\epsilon (2 \pi n + t))|^p |f(t)|_X^p  w(t) \ud t \Big)^{\frac{1}{p}} \\ 
& \quad \times \Big( \frac{1}{2\pi}\int_{-\pi}^{\pi} 2\pi \epsilon \sum_{n \in \ZZ} |\psi(\epsilon (2 \pi n + t))|^q |g(t)|_{Y^*}^q  w^{1-q}(t) \ud t \Big)^{\frac{1}{q}} \\
&  \leq \|m(D)\| \|\phi\|_{L^p(\R)} \|f\|_{L^p_{w}(X)} \|\psi\|_{L^q(\R)} \|g\|_{L^q_{w^{1-q}}(Y^*)} \\
& \leq \|m(D)\| \|f\|_{L^p_{w}(X)} \|g\|_{L^q_{w^{1-q}}(Y^*)}.
\end{align*}
Since $L^q_{w^{1-q}}(Y^{*})$ is norming for $L^p_{w}(Y)$ we have
\begin{align*}
\|m(\Delta)f\|_{L^p_w(Y)} & = \sup_{\|g\|_{L^q_{w^{1-q}}(Y^{*})} = 1} \quad \int_{[-\pi, \pi]}\langle m(\Delta)f (t), g(t)\rangle_{Y,Y^*} \ud t \\
&  \leq \|m(D)\|\|f\|_{L^p_w(X)}.
\end{align*}
It finishes the proof.
\end{proof}

\begin{remark}\label{strategy} (a) For our purposes (see, e.g. the proof of Theorem \ref{ext th}), for a given Banach function space $\Phi$ over $(\T, \ud t)$ we are interested in constructing a Banach function space $\Psi$ over $(\R, \ud t)$ such that $(\Phi, \Psi)$ is a de Leeuw's couple and, in addition, $\Psi$ inherits some {\it analytic} properties of $\Phi$. For instance, we need to know that the Hardy-Littlewood operator $M_\R$ is bounded on $\Psi$ if $M_\T$ is bounded on $\Phi$. We do not  know if such $\Psi$ can be contracted for Banach function spaces $\Phi$, which are considered in the results of Section \ref{section 5}. Here, we only mention that applying Wiener's amalgam type construction we can define such a space $\Psi$ under additional density and rearrangement type assumptions on $\Phi$. 

It determines our strategy of the proofs of some results presented below (e.g. Theorems \ref{new} and \ref{ext th}), and does not allow to obtain some periodic variants of results already known in the $\R$-setting via a direct transference. 
 
(b) However, by straightforward arguments (see also  \cite[Theorem 2.10]{BoKa97}), one can show that for every Muckenhoupt weight $w\in A_p(\T)$ its periodic extension $\widetilde w$ on $\R$ is in $A_p(\R)$. Therefore, by Muckenhoupt's theorem and Lemma \ref{weighted Lp}, for every $w\in A_p(\T)$ and $p\in (1,\infty)$ the couple $(L^p_w(\T), L^p_{\widetilde w}(\R))$ is de Leeuw's couple and $M_\R$ is bounded on $L^p_{\widetilde w}(\R)$. 
To get desired results for general $\Phi$'s we adopt the Rubio de Francia iteration algorithm to the periodic setting. 
\end{remark}

\subsection{The extrapolation theorem}
The following extrapolation theorem is a crucial ingredient of results presented in the sequel. 

\begin{lemma}\label{RdeF} Let $\Phi$ be a Banach function space over $(\T, \ud t)$ such that the Hardy-Littlewood operator $M_\T$ is bounded on $\Phi$ and its dual $\Phi'$. 
\begin{itemize}
\item [(i)] Let $X$ be a Banach space. Then, for every $p\in (1,\infty)$
\[
\Phi(\T; X) \subset \bigcup_{w\in A_p(\T)}L^p_w(\T; X)
\]
\item [(ii)] Let $X$ and $Y$ be Banach spaces and $p\in (1,\infty)$. Assume that $\{ T_j \}_{j\in J}$ is a family of linear operators  $T_j: \cP(\T; X) \rightarrow\cD'(\T; Y)$ such that for every  $\cW \subset A_p(\T)$ with $\sup_{w\in \cW} [w]_{A_p} < \infty$
\[
\sup_{w\in \cW} \sup_j \| T_j \|_{\cL(L^p_w(\T;X),L^p_w(\T;Y))} < \infty. 
\]
Then, each $T_j$ extends to a linear operator $\cT_j$ on  $ \bigcup_{w\in A_p(\T)} L^p_w(\T;X) \subset L^1(\T;X)$ and has the restriction to an operator in $\cL(\Phi(\T;X), \Phi(\T;Y))$. Moreover, 
\[
\sup_{j\in J} \| \cT_j\|_{\cL(\Phi(\T; X), \Phi(\T;Y))} <\infty.
\] 
\end{itemize}
\end{lemma}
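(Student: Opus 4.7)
The overall strategy is to mimic the Rubio de Francia extrapolation argument already carried out in the proof of Proposition \ref{per Besov}(ii'). The key tool is the pair of sublinear operators $\cR$, $\cR'$ defined in (\ref{algo ops}), which convert arbitrary non-negative elements of $\Phi$ and $\Phi'$ into $A_1$-weights, together with the composite weight $w_{g,h,p}$ of (\ref{weight}), whose $A_p$-constant is controlled uniformly in $g,h$ by (\ref{constants}).

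For part (i), given a nonzero $f \in \Phi(\T; X)$, I would fix any nonzero $h \in \Phi'$ and set $g := |f|_X$, $w := w_{g,h,p} = (\cR g)^{1-p}\cR' h \in A_p(\T)$. Using $g \le \cR g$ pointwise,
\[
\int_\T |f|_X^p\, w\, dt \le \int_\T \cR g \cdot \cR' h \, dt \le \|\cR\|_{\cL(\Phi)} \|\cR'\|_{\cL(\Phi')} \|f\|_{\Phi(X)} \|h\|_{\Phi'},
\]
which shows $f \in L^p_w(\T; X)$.

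For part (ii), I would first extend the $T_j$'s beyond $\cP(X)$. The hypothesis applied with $\cW = \{w\}$ for a single $w \in A_p(\T)$, combined with the density of $\cP(X)$ in $L^p_w(X)$ (a consequence of Muckenhoupt's theorem and Fej\'er-type approximation), produces a unique bounded extension $T_j^{(w)}\colon L^p_w(X) \to L^p_w(Y)$. Since any two such extensions agree on the common dense subspace $\cP(X)$, they piece together into a single operator $\cT_j$ on $\bigcup_{w \in A_p} L^p_w(X)$, which by part (i) contains $\Phi(X)$ and is contained in $L^1(\T; X)$. For the uniform norm bound, given $f \in \Phi(X)$ and $h \in \Phi'$ with $\|h\|_{\Phi'} \le 1$, I would again set $w := \cR(|f|_X)^{1-p}\cR' h$; denote by $K$ the finite supremum of $\|T_j\|_{\cL(L^p_w)}$ over all $j$ and all such $w$, finite by the hypothesis and (\ref{constants}). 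Factoring $\cR' h = w^{1/p}\,[\cR|f|_X \cdot \cR' h]^{1/p'}$ and applying H\"older,
\[
\int_\T |\cT_j f|_Y |h|\, dt \le \int_\T |\cT_j f|_Y \cR' h\, dt \le \|\cT_j f\|_{L^p_w(Y)} \Big(\int_\T \cR|f|_X\cdot \cR' h\, dt\Big)^{1/p'},
\]
and combining $\|\cT_j f\|_{L^p_w(Y)} \le K\|f\|_{L^p_w(X)}$ with the part-(i) estimate yields $\|\cT_j f\|_{\Phi(Y)} \le K\|\cR\|_{\cL(\Phi)}\|\cR'\|_{\cL(\Phi')}\|f\|_{\Phi(X)}$ after taking the supremum over $h$ via the Lorentz--Luxemburg identity $\Phi = (\Phi')'$.

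The principal technical delicacy is the unambiguous definition of $\cT_j$ on $\Phi(X)$: it hinges on density of $\cP(X)$ in every $L^p_w(X)$ with $w \in A_p(\T)$, so that the extensions $T_j^{(w)}$ corresponding to different weights agree on intersections. Beyond this, the argument reduces to weighted H\"older estimates and the pointwise dominations $g \le \cR g$, $|h| \le \cR' h$ built into the iteration algorithm.
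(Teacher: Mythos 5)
Your proof is correct and follows essentially the same route the paper intends: the paper omits the proof, referring to \cite[Theorem 3.1]{Kr22}, which is precisely the Rubio de Francia iteration-algorithm argument you carry out with the operators $\cR$, $\cR'$ and the weights $w_{g,h,p}$ from \eqref{algo ops}--\eqref{constants}. Your identification of the well-definedness of $\cT_j$ on intersections $L^p_{w_1}\cap L^p_{w_2}$ as the one delicate point is apt; it is settled by noting that the Fej\'er means of $f$ converge to $f$ in both weighted norms simultaneously.
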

The proof follows the lines of the proof of \cite[Theorem 3.1]{Kr22} almost verbatim. Therefore, we leave it for the reader. 

\begin{remark}\label{RdeF remarks}  
(a) Recall that the Hardy-Littlewood maximal operator $M_\T$ is bounded on $L^p_w(\T)$ for all $p\in (1,\infty)$ and each Muckenhoupt weight $w\in A_p(\T)$; see, e.g. \cite[Theorem 5.2, Corollary 5.3]{BoKa97}.

(b) By the reverse H\"older inequality for weights in $A_p(\T)$ one can show that for every $p\in (1,\infty)$ and $w\in A_p(\T)$ there exists $q>p$ such that 
$L^p_w(\T) \hookrightarrow L^q(\T)$; see the proof of \cite[Theorem 4.1]{KnMcMo16}. 
In the context of a comment stated below \cite[Remark 4.4]{KnMcMo16}, note that for each $p\in (1,\infty)$ 
one can construct a weight $w$ on $[0, 2\pi)$, which is in the class $A_p([0,2\pi])$(as it is defined in \cite{KnMcMo16}), but the function $\T\ni \tau \mapsto w(\arg \tau)$ is not in $A_p(\T)$. 

By Lemma \ref{RdeF}(i) we get that 
\begin{equation}\label{union}
\bigcup \Phi(\T;X) = \bigcup_{p>1} L^p(\T;X),
\end{equation}
 where the first union is taken over all Banach function spaces $\Phi$ over $(\T,\ud t)$ such that $M_\T$ is bounded on $\Phi$ and $\Phi'$.
\end{remark}

We conclude this section with the following consequence of Lemmas \ref{weighted Lp} and \ref{RdeF}. 

\begin{proposition}\label{Hilbert t} Let $X$ be a Banach space with the UMD property.  Then,
for every Banach function space $\Phi$ over $(\T, \ud t)$ such that the Hardy-Littlewood operator $M_\T$ is bounded on $\Phi$ and $\Phi'$, 
 the family 
$\{\chi_I(\Delta): I\subset \R \textrm{ an interval}\}$ is uniformly bounded in $\cL(\Phi(X))$. 
\end{proposition}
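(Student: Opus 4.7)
The plan is to combine three ingredients already available in the paper: the UMD property of $X$ (which yields weighted vector-valued boundedness of the Hilbert transform on $\R$), the operator-valued weighted de Leeuw theorem of Lemma \ref{weighted Lp}, and the Rubio de Francia extrapolation scheme of Lemma \ref{RdeF}(ii). The global strategy is to first establish a uniform bound on $\{\chi_I(\Delta)\}_I$ in $\cL(L^p_w(\T;X))$ for each $p\in(1,\infty)$ and each family of weights $\cW\subset A_p(\T)$ with uniformly bounded $A_p$-constants, and then extrapolate to $\Phi(\T;X)$.

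First, I would deal with the Lebesgue-point hypothesis required by Lemma \ref{weighted Lp}. For any interval $I\subset\R$ one can select an interval $I'\subset \R$ with non-integer endpoints such that $I\cap \ZZ = I'\cap \ZZ$; then $\chi_I(\Delta) = \chi_{I'}(\Delta)$ and every $k\in\ZZ$ is a Lebesgue point of $\chi_{I'}$. Next, I would use the UMD property to obtain the uniform boundedness of $\{\chi_{I'}(D)\}_{I'}$ on weighted vector-valued $L^p$-spaces over $\R$. Writing $\chi_{(a,b)} = \chi_{(-\infty,b)} - \chi_{(-\infty,a)}$, and observing that
\[
\chi_{(-\infty,c)}(D) = M_c\,\chi_{(-\infty,0)}(D)\,M_{-c} = \tfrac{1}{2}M_c(I+iH)M_{-c},
\]
where $M_c f(x):=e^{icx}f(x)$ is an isometry on every $L^p_{\widetilde w}(\R;X)$ and $H$ denotes the Hilbert transform, the UMD property (and Bourgain's theorem on weighted boundedness of the Hilbert transform) yields
\[
\sup_{I'\subset \R}\|\chi_{I'}(D)\|_{\cL(L^p_{\widetilde w}(\R;X))}\leq C(X,p,[\widetilde w]_{A_p})
\]
for every $p\in(1,\infty)$ and $\widetilde w\in A_p(\R)$.

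Now fix $p\in(1,\infty)$ and let $\cW\subset A_p(\T)$ be any family with $\sup_{w\in\cW}[w]_{A_p}<\infty$. By Remark \ref{RdeF remarks}(b), the $2\pi$-periodic extension $\widetilde w$ of each $w\in\cW$ lies in $A_p(\R)$, and $\sup_{w\in\cW}[\widetilde w]_{A_p}<\infty$ as well. An application of Lemma \ref{weighted Lp} to each multiplier $\chi_{I'}$ (which has $\ZZ$ inside its Lebesgue set by the endpoint adjustment) yields
\[
\sup_{I\subset\R}\sup_{w\in\cW}\|\chi_I(\Delta)\|_{\cL(L^p_w(\T;X))}\leq \sup_{I\subset\R}\sup_{w\in\cW}\|\chi_I(D)\|_{\cL(L^p_{\widetilde w}(\R;X))}<\infty.
\]
Finally, since $M_\T$ is bounded on $\Phi$ and $\Phi'$, the extrapolation statement Lemma \ref{RdeF}(ii) applied to the family $\{\chi_I(\Delta)\}_I$ (regarded as operators on $\cP(\T;X)$) gives the desired uniform bound $\sup_I\|\chi_I(\Delta)\|_{\cL(\Phi(\T;X))}<\infty$.

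The main obstacle is essentially bookkeeping: the Lebesgue-point condition in Lemma \ref{weighted Lp} is overcome by the endpoint shift $I\mapsto I'$, and the UMD weighted bound for $\chi_I(D)$ is a direct consequence of the classical identity between half-line cut-offs, modulation, and the Hilbert transform, together with the well-known fact that $H$ is bounded on $L^p_{\widetilde w}(\R;X)$ with a norm controlled by $p$, $X$, and $[\widetilde w]_{A_p}$. Once these are in place, the Rubio de Francia machinery of Lemma \ref{RdeF}(ii) concludes the argument without further work.
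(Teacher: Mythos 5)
Your proposal is correct and follows essentially the same route as the paper: weighted vector-valued Calder\'on--Zygmund/Hilbert-transform bounds on $\R$ from the UMD property, transfer to $\T$ via Lemma \ref{weighted Lp}, and Rubio de Francia extrapolation via Lemma \ref{RdeF}(ii). The only (cosmetic) difference is that you reduce $\chi_I$ to modulated half-line projections already on $\R$ and transfer each $\chi_{I'}(D)$, whereas the paper transfers $\sgn(\Delta)$ and performs the decomposition $\chi_{\{a,\dots,b\}}=\tfrac12\sgn(\cdot-a)-\tfrac12\sgn(\cdot-b)+\chi_{\{a\}}+\chi_{\{b\}}$ on the periodic side; your explicit shift of the endpoints to non-integers is a clean way to meet the Lebesgue-point hypothesis of Lemma \ref{weighted Lp}.
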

\begin{proof}
Recall that the Hilbert transform $H_\R = -i\sgn(D)$ is bounded on $L^2(\R;X)$ and its kernel satisfies the Calder\'on-Zygmund conditions; see, e.g. \cite[Theorem 5.1, p. 374]{HyNeVeWa17}. 
In particular, for each $\cW\subset A_2(\R)$ with $\sup_{w\in \cW} [w]_{A_2}<\infty$, $\sup_{w\in \cW} \|H_\R\|_{\cL(L^2(\R;X))}<\infty$; see, e.g. \cite{RuRuTo86} or \cite{Hy12}. By  Lemma \ref{weighted Lp} we infer that the periodic Hilbert transform $H_\T := -i \sgn (\Delta)$ has the analogues boundedness property as $H_\R$ does. 
Therefore, by Lemma \ref{RdeF} we get that $H_\T$ is in $\cL(\Phi(X))$. 
Since for every $a, b\in \ZZ$ with $a<b$ we have 
\[
\chi_{\{ a, ..., b\}}(k) = \frac{1}{2}\sgn(k - a) - \frac{1}{2}\sgn(k - b) + \chi_{\{a\}}(k) + \chi_{\{b\}}(k) \qquad (k\in \ZZ)
\]
and $\|\sgn(\cdot - n)(\Delta)\|_{\cL(\Phi(X))} = \|H_\T\|_{\cL(\Phi(X))}$ for $n\in \ZZ$, we get the desired claim. 
\end{proof}

\section{The boundedness results for periodic Fourier multipliers} \label{section 5}

This section provides the boundedness results for periodic Fourier multipliers on spaces introduced in Section \ref{spaces}. 
We treat the case of Besov and Triebel-Lizorkin spaces separately to the case of  Banach function spaces $\Phi$. 

In the both subsections, our purpose is to show that the typical multiplier conditions, sufficient for the boundedness on $B^{s,q}_{L^p}$-spaces/$F^{s,q}_{L^p}$-spaces/$L^p$-spaces, ensure the boundedness on $B^{s,q}_\Phi$-spaces/$F^{s,q}_{\Phi}$-spaces/$\Phi$-spaces corresponding to general Banach functions spaces $\Phi$ (see Theorems \ref{new} and \ref{ext th} below).

\subsection{Multipliers on generalized Besov and Triebel-Lizorkin spaces}

In the case of Besov spaces, it is readily seen from the definition of their norms $\|\cdot\|_{B^{s,q}_{\Phi}}$, that the problem whether a sequence $m$ is a multiplier on $B^{s,q}_{\Phi}(X)$ reduces to showing that its {\it dyadic parts} $\psi_j m(\Delta)$, $j\in \N_0$, are uniformly bounded on the underlying Banach function space $\Phi(X)$. 
Note that, in the contrast to the real line case, no extension procedures are needed in the periodic setting; see Subsection \ref{per dist}. 
Moreover, the boundedness of each dyadic part $m\psi_j(\Delta)$ of $m(\Delta)$ is automatic (see Lemma \ref{fact}), and the problem reduces to the uniform boundedness of their norms. For this reason we postpone the study of the boundedness of general multipliers on Banach function spaces $\Phi$ to the next section.  

The following lemma makes the above observation rigorous.

\begin{lemma}\label{aux obs} Let $X$ and $Y$ be Banach spaces. Let $\Phi$ be a Banach function space over $(\T, \ud t)$. Then the following statements hold.

Let $m:\ZZ \rightarrow \cL(X,Y)$ be such that 
\begin{equation}\label{dyadic parts}
\mu:=\sup_{j\in \N_0}\|(\psi_j m)(\Delta)\|_{\cL(\Phi(X), \Phi(Y))}<\infty. 
\end{equation}

Then,  for every $s\in \R$ and $q\in [1,\infty]$, $m\in \cM_{B^{s,q}_\Phi}(\T; X,Y)$. 

More precisely, the multiplier $m(\Delta)$ restricts to an operator in  $\cL(B^{s,q}_\Phi (X), \cL(B^{s,q}_\Phi (Y))$ and for every $f \in \B^{s,q}_{\Phi}(X)$
\begin{equation}\label{convergence}
m(\Delta)f = \sum_{j= 0}^\infty (\psi_j m)(\Delta) f = 
 \lim_{N\rightarrow \infty} \sum_{k\in \ZZ} e_k\otimes \psi(2^{-N}k) m(k) \hat f(k)
\end{equation}
with the convergence in $\B^{s,q}_{\Phi}(Y)$ if $q<\infty$, and in the $B^{-s,1}_{\Phi'}(Y^*)$-topology if $q=\infty$. 

Moreover, if $q=\infty$, then the restriction of $m(\Delta)$ to $\B^{s,\infty}_{\Phi}(X)$ is $\sigma(\B^{s,\infty}_{\Phi}(X), B^{-s,1}_{\Phi'}(X^*))$-to-$\sigma(\B^{s,\infty}_{\Phi}(Y), B^{-s,1}_{\Phi'}(Y^*))$-continuous.
\end{lemma}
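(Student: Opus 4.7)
The proof rests on two support relations of the Littlewood--Paley resolution: $\psi_j \chi_j = \psi_j$ (with $\chi_j := \psi_{j-1}+\psi_j+\psi_{j+1}$ and $\psi_{-1}\equiv 0$) and $\psi_j \psi_k = 0$ for $|j-k|\geq 2$. These yield the operator identity $\psi_j(\Delta) m(\Delta) = (\psi_j m)(\Delta)\, \chi_j(\Delta)$ on $\cD'(\T; X)$ for every $j \in \N_0$, from which I extract the pointwise dyadic bound
\[
\|2^{sj}\psi_j(\Delta) m(\Delta) f\|_{\Phi(Y)} \leq \mu \sum_{r\in\{-1,0,1\}} 2^{-sr}\|2^{s(j+r)}\psi_{j+r}(\Delta) f\|_{\Phi(X)}.
\]
Taking the $\ell^q$-norm in $j$, with the obvious modification when $q=\infty$, gives the Besov boundedness $m(\Delta): B^{s,q}_\Phi(X)\to B^{s,q}_\Phi(Y)$ with norm $\leq C(s)\mu$ for all $s\in\R$ and $q\in[1,\infty]$.

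Next I identify this action with the series. Writing $S_N := \sum_{j=0}^N (\psi_j m)(\Delta) = (\psi(2^{-N}\cdot)\, m)(\Delta)$, I observe that $S_N f$ coincides with the Fourier truncation $\sum_{k\in\ZZ} e_k\otimes \psi(2^{-N}k)m(k)\hat f(k)$ since $\psi(2^{-N}\cdot)$ is compactly supported. Letting $T_N := I - \psi(2^{-N}\Delta) = \sum_{j>N}\psi_j(\Delta)$ and using $\psi_k(\Delta) T_N \equiv 0$ for $k\leq N-2$, the same estimate applied to $m(\Delta)f - S_N f = m(\Delta) T_N f$ localizes to $k\geq N-1$, giving
\[
\|m(\Delta) f - S_N f\|_{B^{s,q}_\Phi(Y)}^q \leq C(s)\mu^q \sum_{k\geq N-2} \|2^{sk}\psi_k(\Delta) f\|_{\Phi(X)}^q,
\]
a tail of a summable series when $q<\infty$, hence norm convergence in that case.

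For $q=\infty$ norm convergence fails in general, so I pair with $g\in B^{-s,1}_{\Phi'}(Y^*)$ via \eqref{duality}. The support argument again restricts the surviving terms of $\langle g, m(\Delta) T_N f\rangle$ to $j\geq N-1$ with $|l-j|\leq 1$; combining the pointwise dyadic estimate for $m$, the $\Phi$--$\Phi'$ duality, and $2^{sl}\asymp 2^{sj}$ for $|l-j|\leq 1$, I majorize
\[
|\langle g, m(\Delta)(f - S_N f)\rangle| \lesssim \mu\,\|f\|_{B^{s,\infty}_\Phi(X)} \sum_{l\geq N-2} \|2^{-sl}\psi_l(\Delta) g\|_{\Phi'(Y^*)},
\]
which is a tail of $\|g\|_{B^{-s,1}_{\Phi'}(Y^*)}$ and therefore vanishes as $N\to\infty$.

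For the weak-$\sigma$ continuity of $m(\Delta)$ on $B^{s,\infty}_\Phi(X)$, I introduce the transpose sequence $\tilde m(k) := m(-k)^*\in\cL(Y^*, X^*)$. A direct Fourier-coefficient calculation yields the adjoint identity $\int_\T\langle u, (\psi_j m)(\Delta) v\rangle\, \ud t = \int_\T\langle (\psi_j \tilde m)(\Delta) u, v\rangle\, \ud t$; combined with the Lorentz--Luxemburg duality $(\Phi')' = \Phi$, this gives $\sup_j\|(\psi_j \tilde m)(\Delta)\|_{\cL(\Phi'(Y^*),\Phi'(X^*))}\leq\mu$. Applying the Besov bound proved above to $\tilde m$ yields $\tilde m(\Delta)\in\cL(B^{-s,1}_{\Phi'}(Y^*), B^{-s,1}_{\Phi'}(X^*))$, and extending the identity $\langle g, m(\Delta) f\rangle = \langle \tilde m(\Delta) g, f\rangle$ from polynomials to all $f\in B^{s,\infty}_\Phi(X)$ yields the desired continuity. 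I expect the main obstacle to be the bookkeeping in the $q=\infty$ case --- in particular, reducing the double series in \eqref{duality} to a genuine tail of $\|g\|_{B^{-s,1}_{\Phi'}(Y^*)}$ and verifying the pointwise adjoint identity on vector-valued Banach function spaces.
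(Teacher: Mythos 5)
Your proof is correct and follows essentially the same route as the paper's: the identity $\psi_j(\Delta)m(\Delta)=(\psi_j m)(\Delta)\chi_j(\Delta)$ (the paper writes it as $(\chi_j m)(\Delta)\psi_j(\Delta)$) drives the uniform dyadic estimate, and the $q=\infty$ continuity is obtained by the same transpose argument with $\tilde m(k)=m(-k)^*$ together with the Lorentz--Luxemburg fact that $\Phi(X)$ norms $\Phi'(X^*)$. The only divergence is in one sub-step: for the norm convergence when $q<\infty$ you estimate the tail directly, whereas the paper deduces it from uniform boundedness of the partial sums plus density of $\cP(X)$ in $B^{s,q}_\Phi(X)$; to keep your version clean, expand the remainder as $\sum_{j>N}(\psi_j m)(\Delta)f$ and use $\psi_k\psi_j=0$ for $|j-k|\geq 2$, rather than passing through $T_N=I-\psi(2^{-N}\Delta)$, since controlling $\|\psi_{k}(\Delta)T_Nf\|_{\Phi(X)}$ by $\|\psi_{k}(\Delta)f\|_{\Phi(X)}$ at the boundary indices $k\approx N$ would otherwise require the uniform $\Phi$-boundedness of $\psi(2^{-N}\Delta)$, i.e.\ the boundedness of $M_\T$ on $\Phi$.
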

\begin{proof}   Since $\psi_j(\Delta) m(\Delta) f = (\chi_j m)(\Delta) \psi_j(\Delta)f$ for every $f\in \cD'(X)$ and $j\in \N_0$, where $\chi_j:=\psi_{j-1}+\psi_j+\psi_{j+1}$ with $\psi_{-1}\equiv 0$, the condition \eqref{dyadic parts} readily gives that $m\in \cM_{B^{s,q}_\Phi}(\T;X,Y)$ for all $s\in \R$ and $q\in [1,\infty]$. 
The point is to show the claimed representation formula for $m(\Delta)$ and its continuity when $q=\infty$.

For $q<\infty$, since $\{\psi_j(\Delta)\}_{j\in \N_0}$ is the resolution of the identity operator on $\cD'(X)$ and, by Lemma \ref{per Besov}, $\cP(X)$ is a dense subset of  $B^{s,q}_\Phi(X)$, it is sufficient to show that the operators $\sum_{0\leq l\leq N} (\psi_l m)(\Delta)$, $N\in \N$, are uniformly in $\cL(B^{s,q}_\Phi(X),B^{s,q}_\Phi(Y))$. Let $f\in B^{s,q}_\Phi(X)$. Then, 
\begin{align*}
\left\| \sum_{0\leq l\leq N} (\psi_l m)(\Delta)f \right\|_{B^{s,q}_\Phi(Y)} & =
 \left(\sum_{j\in \N_0}  \left\| 2^{js} \sum_{0\leq |l-j|\leq 1 \atop 0\leq l \leq N } (\psi_l  m)(\Delta) \psi_j(\Delta) f \right\|^q_{\Phi(Y)}  \right)^{\frac{1}{q}}\\
 & \leq 3 \mu \,\|f\|_{B^{s,q}_\Phi(X)}.
\end{align*}
Let $q=\infty$. We start with the continuity statement. First note that, for all $j\in \N_0$, the adjoint operator to the operator $(\psi_jm)(\Delta) \in \cL(\Phi(X), \Phi(Y))$ on functions $g\in \Phi'(Y^*) \subset [\Phi(Y)]^*$ (see \eqref{duality}) acts as the multiplier $m(-\cdot)^*(\Delta)$. Here, $m(s)^*$ for $s\in \R$ stands for the adjoint of $m(s)\in \cL(X,Y)$. Let 
$f\in B^{s,\infty}_{\Phi}(X)$ and $g\in B^{-s,1}_{\Phi'}(Y^*)$. Since for $j,l\in \N_0$
\[
\bracket{\psi_j(\Delta) g}{\psi_l(\Delta) [m(\Delta) f]}_{\Phi'(Y^*),\Phi(Y)} = 
\bracket{\psi_j(\Delta) [m(-\cdot)^*(\Delta)g]}{\psi_l(\Delta) f}_{\Phi'(X^*),\Phi(X)},
\]
it suffices to show that $m(-\cdot)^*(\Delta)g$ is in $B^{-s,1}_{\Phi'}(X^*)$. For, since 
$\Phi(X)\subset [\Phi'(X^*)]^*$ is a norming subspace of $\Phi'(X^*)$, for each $j\in \N_0$ there exists $h_j\in \Phi(X)$ such that 
\[
\|\psi_j(\Delta) [m(-\cdot)^*(\Delta)g]\|_{\Phi'(X^*)} \leq 
\left|\bracket{\psi_j(\Delta) [m(-\cdot)^*(\Delta)g]}{h_j}_{\Phi'(X^*),\Phi(X)}\right| + 2^{sj - j}.
\]
However,  
\begin{align*}
|\bracket{\psi_j(\Delta)  [m(-\cdot)^*(\Delta)g]}{h_j}_{\Phi'(X^*),\Phi(X)}|  & = \left|\bracket{\psi_j(\Delta) g}{(\chi_j m)(\Delta) h_j}_{\Phi'(X^*),\Phi(X)} \right| \\
& \leq 3 \mu \|\psi_j(\Delta)g\|_{\Phi'(Y^*)}, 
\end{align*}
which gives the desired claim. The convergence of the series in \eqref{convergence} in the 
$B^{-s,1}_{\Phi'}(Y^*)$-topology follows from similar arguments to those presented above. We omit it.  
\end{proof}

In the context of Subsection \ref{per dist}, it is interesting to compare the proof of the above {\it boundedness principle} for periodic Fourier multipliers with the proof of the corresponding result for $\R$, see \cite[Theorem 3.8]{Kr22}; cf. also \cite[Problems 3.2 and~3.3]{HyWe07}.

We complete Lemma \ref{aux obs} with the following supplementary observation on the convergence of the series in \eqref{convergence} under additional assumptions on the geometry of the underlying spaces $X$ and $Y$.

\begin{corollary} Let $X$ and $Y$ be Banach spaces and suppose that $Y$ has the $U\!M\!D$ property. Let $\Phi$ be a Banach function space on $(\T; \ud t)$ such that $M_\T$ is bounded on $\Phi$ and $\Phi'$. Then, for every $q\in [1,\infty)$, $s\in \R$, $m\in \cM_{B^{s,q}_\Phi}(\T; X,Y)$ and $f\in B^{s,q}_\Phi(X)$
\begin{equation}\label{pointw}
m(\Delta)f = \lim_{N\rightarrow \infty} \sum_{|k|\leq N} e_k\otimes m(k)\hat f(k) 
\end{equation}
with the convergence in $B^{s,q}_\Phi(Y)$. 
If, in addition, $Y$ is a Banach function space and $m(\Delta)f\in \Phi(Y)$ (for instance, when $s>0$), then the above convergence holds pointwise almost everywhere on $\T$.
\end{corollary}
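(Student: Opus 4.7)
The plan is to handle the convergence in $B^{s,q}_\Phi(Y)$ and the pointwise convergence separately, both resting on the uniform boundedness of the symmetric partial-sum operator $S_N := \chi_{[-N,N]}(\Delta)$. Set $g := m(\Delta)f$, which by Lemma~\ref{aux obs} lies in $B^{s,q}_\Phi(Y)$, so that $S_N g = \sum_{|k|\le N} e_k\otimes m(k)\widehat{f}(k)$.

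For the convergence in $B^{s,q}_\Phi(Y)$, I would first invoke Proposition~\ref{Hilbert t}: since $Y$ has the $U\!M\!D$ property and $M_\T$ is bounded on $\Phi$ and $\Phi'$, there is $C>0$ with $\sup_N\|S_N\|_{\cL(\Phi(Y))}\le C$. Writing
\[
\|S_N g-g\|_{B^{s,q}_\Phi(Y)}^q=\sum_{j\in\N_0} 2^{sjq}\,\|\psi_j(\Delta)(S_N g - g)\|_{\Phi(Y)}^q
\]
and using that $\psi_j(\Delta)$ commutes with $S_N$, each summand vanishes for $N\ge 2^{j+1}$ (since $\supp\psi_j\subset[-2^{j+1},2^{j+1}]$) and is bounded for every $N$ by $(C+1)^q\,2^{sjq}\|\psi_j(\Delta)g\|_{\Phi(Y)}^q$, which is $\ell^1$-summable in $j$ as $g\in B^{s,q}_\Phi(Y)$. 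Dominated convergence in $\ell^q(\N_0)$ then yields $\|S_N g-g\|_{B^{s,q}_\Phi(Y)}\to 0$.

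For the pointwise statement, assuming in addition that $Y$ is a Banach function space and $g\in\Phi(Y)$, my plan is to appeal to the vector-valued Carleson--Hunt theorem of Rubio de Francia for $U\!M\!D$ Banach function spaces together with its weighted extension: the Carleson maximal operator $C^*h(\tau):=\sup_N|S_N h(\tau)|_Y$ is bounded on $L^p_w(\T;Y)$ for every $p\in(1,\infty)$ and $w\in A_p(\T)$, with a bound depending only on $[w]_{A_p}$. Lemma~\ref{RdeF}(ii) then extrapolates this boundedness to $\Phi(Y)$. Since $S_N h=h$ eventually for each $h\in\cP(Y)$, and $\cP(Y)$ is dense in $\Phi(Y)$ (via a Fej\'er-means argument relying on the $M_\T$-assumptions on $\Phi$ and $\Phi'$), Banach's principle will then produce $S_N g(\tau)\to g(\tau)$ in $Y$ for almost every $\tau\in\T$.

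I expect the main obstacle to be the second step, specifically securing weighted bounds on the vector-valued Carleson operator in the $U\!M\!D$-lattice setting in a form that feeds Lemma~\ref{RdeF}; once these are granted, the extrapolation and Banach's principle pieces are standard. The norm-convergence part, by contrast, is essentially a packaging of Proposition~\ref{Hilbert t} and dominated convergence in $\ell^q$.
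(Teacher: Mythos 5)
Your first half (norm convergence in $B^{s,q}_\Phi(Y)$) is correct and is only a cosmetic variant of the paper's argument: the paper combines the uniform boundedness of $\chi_{[-N,N]}(\Delta)$ on $B^{s,q}_\Phi(Y)$ (Lemma \ref{aux obs} plus Proposition \ref{Hilbert t}) with the density of $\cP(X)$ in $B^{s,q}_\Phi(X)$ from Proposition \ref{per Besov}, whereas you run a dominated-convergence argument directly on the Littlewood--Paley pieces of $g=m(\Delta)f$. Both routes use exactly the same input, and your observation that $S_N\psi_j(\Delta)g=\psi_j(\Delta)g$ for $N\ge 2^{j+1}$ is sound.

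The pointwise half, however, has a genuine gap, and moreover takes a much harder road than necessary. First, the ingredient you yourself flag as the main obstacle --- a weighted vector-valued Carleson--Hunt theorem for $U\!M\!D$ lattices with bounds depending only on $[w]_{A_p}$ --- is not supplied, and Lemma \ref{RdeF}(ii) as stated applies to families of \emph{linear} operators, so even granting the weighted bounds you would only extrapolate the partial sums $S_N$ uniformly, not the sublinear maximal operator $C^*$ that your Banach-principle argument needs. Second, and more decisively, the density of $\cP(Y)$ in $\Phi(Y)$ is false for general $\Phi$ in the admissible class: the hypotheses only require $M_\T$ to be bounded on $\Phi$ and $\Phi'$, which is satisfied, e.g., by $\Phi=L^{p,\infty}(\T)$ for $p>1$, a space without order continuous norm in which polynomials (indeed, bounded functions) are not dense. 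Without a dense class on which a.e.\ convergence is known, the maximal-function argument does not close. The paper's route avoids both problems: by \eqref{union} (i.e., Lemma \ref{RdeF}(i) together with the reverse H\"older inequality, cf.\ Remark \ref{RdeF remarks}(b)) one has $\Phi(Y)\hookrightarrow L^p(\T;Y)$ for \emph{some} $p>1$, so $m(\Delta)f\in L^p(\T;Y)$, and the \emph{unweighted} vector-valued Carleson theorem of Rubio de Francia \cite{RdeF86} applied in $L^p(\T;Y)$ --- where $\cP(Y)$ \emph{is} dense --- already yields the almost everywhere convergence. You should replace your extrapolation-to-$\Phi(Y)$ scheme by this embedding argument.
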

\begin{proof}
By Lemma \ref{aux obs} and Proposition \ref{Hilbert t}, we infer that $\chi_{[-N, N]}(\Delta)$, $N\in \N$, are uniformly in $\cL( B^{s,q}_\Phi(Y))$. Since \eqref{pointw} holds for every $f\in \cP(X)$ and $\cP(X)$ is dense in $B^{s,q}_{\Phi}(X)$ (see Lemma \ref{per Besov}), by Lemma \ref{aux obs}, we get \eqref{pointw} for every $f\in B^{s,q}_\Phi(X)$  

For the additional statement, note that by \eqref{union}, $m(\Delta)f \in L^p(Y)$ for some $p>1$. By Rubio de Francia's vector-valued counterpart of Carleson's theorem (see \cite{RdeF86}), we get the pointwise convergence in \eqref{pointw}. It completes the proof. 
\end{proof}
 
\begin{theorem}\label{new} Let $X$ and $Y$ be Banach spaces. Let $\Phi$ denote a Banach function space over $(\T, \ud t)$ such that the Hardy-Littlewood operator $M_\T$ is bounded on $\Phi$. 

Then the following assertions hold.
\begin{itemize}
\item [(i)] For every $\E:=\B^{s,q}_{\Phi}$ with $s\in \R$ and $q\in [1,\infty]$,  we have that 
\[
 \m^2(\ZZ; \cL(X,Y)) \subset \cM_\E(\T; X,Y).
\]

\item [(ii)] 
 Let $X$ and $Y$ have the $U\!M\!D$ property and, in addition, $M_\T$ is bounded on $\Phi'$. Then, for every $\E:=\B^{s,q}_{\Phi}$ with $s\in \R$ and  $q\in [1,\infty]$,  we have that 
\[
 Var(\ZZ;\cL(X,Y)) \subset \cM_\E(\T; X,Y).
\]  

\item [(iii)] If, in addition, $M_\T$ is bounded on $\Phi'$, then the above statements $(i)$ and $(ii)$ hold for every $\E = F^{s,q}_\Phi$ with $s\in \R$ and $q\in (1,\infty)$. 

\end{itemize}
\end{theorem}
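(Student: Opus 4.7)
The plan is to reduce, via Lemma \ref{aux obs}, the Besov statements (i) and (ii) to the uniform dyadic bound
\[
\mu := \sup_{j \in \N_0}\bigl\|(\psi_j m)(\Delta)\bigr\|_{\cL(\Phi(X),\Phi(Y))} < \infty,
\]
and in fact I would establish the stronger pointwise domination $|(\psi_j m)(\Delta)g(\tau)|_Y \leq C\, M_\T(|g|_X)(\tau)$, uniformly in $j$. For (i), apply Theorem \ref{variant of Jodeit's th} with $\gamma=2$ to extend $m\in \m^2(\ZZ;\cL(X,Y))$ to $\tilde m\in \m^2(\R;\cL(X,Y))$. Then $\eta_j := \psi_j \tilde m$ lies in $\cC^\infty_c(\R;\cL(X,Y))$ with support in an annulus of size $\sim 2^j$, and two integrations by parts using $\|t^\ell \tilde m^{(\ell)}\|_\infty \leq [\tilde m]_{\m^2}$ for $\ell=0,1,2$ give
\[
\|\cF^{-1}\eta_j(x)\|_{\cL(X,Y)} \leq C\,2^j \min\!\bigl(1,(2^j|x|)^{-2}\bigr) =: \phi_j(x),
\]
with $\phi_j$ even, radially decreasing, and $\sup_j \|\phi_j\|_{L^1(\R)} < \infty$. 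Lemma \ref{fact 2}(i) then gives the pointwise $M_\T$-bound, and boundedness of $M_\T$ on $\Phi$ finishes (i).

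For (ii), under UMD and with $M_\T$ bounded on $\Phi$ and $\Phi'$, Proposition \ref{Hilbert t} supplies a uniform constant $K$ with $\|\chi_I(\Delta)\|_{\cL(\Phi(Y))} \leq K$ for every interval $I\subset \R$. Let $I_j := \{k\in \ZZ : 2^{j-1}\leq |k|\leq 2^{j+1}\}$ and, treating the positive and negative halves separately, telescope
\[
m(k) = m(a_j) + \sum_{l=a_j}^{k-1}\bigl(m(l+1)-m(l)\bigr)
\]
from the left endpoint $a_j$ of each half. Substituting this into $(\psi_j m)(\Delta)$, interchanging summation orders, and absorbing the smooth cut-off $\psi_j$ into the interval-indicator multipliers via Lemma \ref{fact}, one represents the dyadic operator as a linear combination of operators of the form $\chi_I(\Delta)$ composed with left-multiplication by either a single value $m(a_j)$ or a single increment $m(l+1)-m(l)$; the total $\cL(X,Y)$-norm of these coefficients is controlled by $[m]_{Var}$, so Proposition \ref{Hilbert t} bounds $\mu$ uniformly in~$j$.

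For (iii), the Triebel-Lizorkin analogue, the pointwise $M_\T$-domination obtained above, combined with the identity $\psi_j(\Delta)m(\Delta)f = (\chi_j m)(\Delta)\psi_j(\Delta)f$, upgrades to the dyadic bound
\[
\bigl|2^{sj}\psi_j(\Delta)m(\Delta)f(\tau)\bigr|_Y \leq C'\,M_\T\!\bigl(|2^{sj}\psi_j(\Delta)f|_X\bigr)(\tau) \qquad (j\in \N_0).
\]
The boundedness of $m(\Delta)$ on $F^{s,q}_\Phi(X)\to F^{s,q}_\Phi(Y)$ thus reduces to the vector-valued Fefferman-Stein inequality
\[
\Bigl\|\bigl(\sum_j M_\T(g_j)^q\bigr)^{1/q}\Bigr\|_\Phi \leq C''\,\Bigl\|\bigl(\sum_j g_j^q\bigr)^{1/q}\Bigr\|_\Phi.
\]
This is the classical weighted Fefferman-Stein theorem when $\Phi = L^p_w$ with $w\in A_p(\T)$ and $p\in (1,\infty)$; the general case follows from the Rubio de Francia extrapolation of Lemma \ref{RdeF} applied with target space $\ell^q(Y)$, mirroring the scalar argument in the proof of Proposition \ref{per Besov}(ii').

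The main obstacle is the extrapolation step in (iii): one has to verify that the Rubio de Francia operators $\cR$, $\cR'$ of \eqref{algo ops}, applied to the $\ell^q$-valued data $(g_j)_j$, produce weights with uniformly controlled $A_p$-constants, so that the weighted $\ell^q$-Fefferman-Stein constants remain uniformly bounded along the extrapolation chain. This is a standard adaptation of the scalar scheme from the proof of Proposition \ref{per Besov}(ii') but demands careful bookkeeping. By comparison, (i) is essentially a single integration-by-parts calculation, and the only delicate point in (ii) is the correct absorption of $\psi_j$ into the characteristic-set multipliers near the origin.
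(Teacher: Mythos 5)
Your parts (i) and (ii) follow the paper's own route essentially verbatim: extend $m$ by Theorem \ref{variant of Jodeit's th}, majorize $\cF^{-1}(\psi_j\widetilde m)$ by the uniformly integrable radially decreasing functions $\phi_j(x)\sim 2^j\min(1,(2^j|x|)^{-2})$ (the paper uses the equivalent $C2^j/(1+2^{2j}x^2)$), invoke Lemma \ref{fact 2} and Lemma \ref{aux obs}; and for (ii), Abel summation over each dyadic block combined with Proposition \ref{Hilbert t} (telescoping from the left rather than the right endpoint is immaterial). Only minor quibble there: the absorption of the cut-off uses the uniform boundedness of $\psi_j(\Delta)$ on $\Phi(X)$, which is Lemma \ref{fact 2}(ii) rather than Lemma \ref{fact}.

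Part (iii) is where you genuinely diverge, and where there is a gap. Your reduction to a vector-valued Fefferman--Stein inequality rests on the pointwise domination $|(\chi_j m)(\Delta)g|_Y\leq C\,M_\T(|g|_X)$ uniformly in $j$. That domination is available only in the $\m^2$ case, where it comes from the integrable radial majorants of $\cF^{-1}(\psi_j\widetilde m)$. For $m\in Var(\ZZ;\cL(X,Y))$ no such pointwise bound exists: the dyadic pieces are built from the operators $\chi_I(\Delta)$, i.e.\ from the periodic Hilbert transform, which is a singular integral and is not pointwise controlled by $M_\T$ (this is already false for $X=Y=\C$). So your argument proves the Triebel--Lizorkin statement only for the $\m^2$ inclusion, not for the $Var$ inclusion, whereas statement (iii) asserts both. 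The paper's proof avoids this entirely by observing that $F^{s,q}_{L^q_w}=B^{s,q}_{L^q_w}$ when the inner and outer exponents coincide, so that parts (i) \emph{and} (ii) already give the multiplier bound on $F^{s,q}_{L^q_w}$ uniformly over weight classes with bounded $A_q$-constant, and then running the Rubio de Francia iteration on the square functions $Gf=(\sum_j|2^{sj}\psi_j(\Delta)f|_X^q)^{1/q}$ to pass to general $\Phi$; this treats both multiplier classes at once. Your route for the $\m^2$ half is workable but requires an extra classical ingredient the paper does not need (the weighted $\ell^q$-valued Fefferman--Stein inequality on $A_q(\T)$, with constant depending only on $[w]_{A_q}$), and Lemma \ref{RdeF} as stated applies to linear operators, so you would have to run the extrapolation on pairs of functions as in the proof of Proposition \ref{per Besov}(ii'). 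I recommend replacing your (iii) by the paper's scheme, which subsumes both cases.
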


\begin{proof} $(i)$ 
Let $\{\psi_j\}_{j\in \N_0}$ be the resolution of the identity on $\R$. 
By Lemma \ref{variant of Jodeit's th} there exists an extension $\widetilde m$ of $m$ on $\R$ such that $\widetilde m\in \m^2(\R; \cL(X,Y))$. Therefore, to get the uniform boundedness of the operators $\psi_j m(\Delta)=(\psi_j\widetilde m)(\Delta)$, $j\in \N$, in $\cL(\Phi(X),\Phi(Y))$, 
 by Lemma \ref{fact 2}, it is sufficient to show that there exists an even, radially decreasing functions $\phi_j$, $j\in \N$, on $\R$ such that $\sup_{j\in \N} \|\phi_j\|_{L^1} <\infty$ and $\|\cF^{-1}(\psi_j \widetilde m)(t) \|_{\cL(X,Y)} \leq \phi_j(t)$ for all $t\in \R$.
The fact that  $(\psi_0 m)(\Delta)=\psi_0\widetilde m(\Delta)$ is in  $\cL(\Phi(\T; X),\Phi(\T; Y))$ follows directly from, e.g. Lemma \ref{fact}.  
  But, the existence of such majorants it is exactly what the proof of \cite[Proposition 4.2(i)]{Kr22} shows ($\phi_j(t)=\frac{C[\widetilde m]_{\m^2} 2^{j}}{1+2^{2j}t^2}$ $(t\in \R)$). Therefore, by Lemma \ref{fact 2} and Lemma \ref{aux obs} we get the desired claim.
 
(ii) First note that for every $j\geq 1$ and $f\in \cD'(X)$ we can write
\begin{align*}
\chi_{[2^{j-1},2^{j+1}]}(\Delta)m(\Delta)&\psi_j(\Delta)f \\
 & = \sum_{l = 2^{j-1}}^{2^{j+1}} e_l \otimes m(2^{j+1})\psi_j(l)\hat{f}(l)\\
 &\quad + \sum_{k = 2^{j-1}}^{2^{j+1}-1} \sum_{l = 2^{j-1}}^{k} e_l \otimes [m(k) - m(k+1)]\psi_j(l)\hat{f}(l) \\ 
& = m(2^{j+1}) \chi_{[2^{j-1},2^{j+1}]}(\Delta)\psi_j(\Delta)f\\
&\quad + \sum_{k = 2^{j-1}}^{2^{j+1}-1} [m(k) - m(k+1)] \chi_{[2^{j-1},k]}(\Delta)\psi_j(\Delta)f 
\end{align*}
Therefore,  
\begin{align*}
&\big\|\chi_{[2^{j-1},2^{j+1}]}(\Delta)\psi_j(\Delta) m(\Delta)f \big\|_{\Phi(Y)} \\
&\quad \leq \left\|m(2^{j+1})\right\|_{\cL(X,Y)} \left\|\chi_{[2^{j-1},2^{j+1}]}(\Delta)\psi_j(\Delta)f \right\|_{\Phi(X)} \\
&\quad \quad + \sup_{2^{j-1} \leq k \leq 2^{j+1}-1} \| \chi_{[2^{j-1},k]}(\Delta)\psi_j(\Delta)f \|_{\Phi(X)} \Big\|\sum_{k = 2^{j-1}}^{2^{j+1}-1} [m(k) - m(k+1)] \Big\|_{\cL(X,Y)}. \\
\end{align*}
Similarly, the analogous estimate holds for $\|\chi_{[-2^{j+1},-2^{j-1}]}(\Delta)m(\Delta)\psi_j(\Delta)f\|_{\Phi(Y)}$. Since $\supp \psi_j\subset \{2^{j-1} \leq |t| \leq 2^{j+1}\}$ ($j\geq 1$),  and $m \in Var(\ZZ;\cL(X,Y))$, by Proposition \ref{Hilbert t}, there exists a constant $\mu>0$ such that for all $f\in \Phi(X)$ and $j\geq 1$ we have
\[
\|\psi_j(\Delta) m(\Delta)f \|_{\Phi(Y)}\leq \mu [m]_{Var} \|\psi_j(\Delta)f \|_{\Phi(Y)}.
\]
Therefore, Lemma \ref{aux obs} completes the proof.

(iii) The proof of this part follows arguments presented in the proof of Proposition \ref{per Besov}. We sketch some details.
First, since $F^{s,q}_{\Psi}(X) = B^{s,q}_\Psi(X)$ for every $\Psi=L^q_w$ ($q\in (1,\infty), w\in A_p(\T)$), $m(\Delta)$ restricts to an operator in $\cL(F^{s,q}_\Psi(X),F^{s,q}_\Psi(Y))$ for such $\Psi's$. Its norm is bounded by 
\[
\mu_{w,q}: = \sup_{j\in \N_0} \|(\psi_jm)(\Delta)\|_{\cL(\Psi(X), \Psi(Y))},
\]
$\sup_{w\in \cW} \mu_{w,q} < \infty$ for each $\cW\subset A_q(\T)$ with $\sup_{w\in \cW}[w]_{A_q}<\infty$. Furthermore, for every $f\in F^{s,q}_{\Psi}(X) $ and $j\in \N_0$
\begin{equation}\label{TL est}
\|\psi_j(\Delta) m(\Delta) f \|_{\Psi(Y)}\leq 3 \mu_{w,q} \|\psi_j(\Delta)f\|_{\Psi(X)}. 
\end{equation}

Fix $\Phi$ and $f\in \cP(X)$. Let  
\[
Gf:= \left( \sum_{j\in \N_0} | 2^{sj} \psi_j(\Delta) f(\cdot) |^q_X \right)^{1/q}. 
\]
and similarly for $G(m(\Delta)f)$. Of course, $Gf$ and $G(m(\Delta)f)$ are in $\Phi$; see,  e.g. \eqref{embedding}. 
Let $h\in \Phi'$,  $h\neq  0$, and set  
\[
w:= w_{Gf, h, q}:= \cR(Gf)^{1-q} \cR' h.
\]
Then $Gf\in L^q_w$, i.e. $f\in F^{s,q}_{L^q_w}(\T; X)$ and $m(\Delta)f \in F^{s,q}_{L^q_w}(\T; X)$. By \eqref{TL est} and a similar argument as in the proof of Proposition \ref{per Besov}(ii) give 
\begin{align*}
3\mu_{w,q}  \|\cR\|_{\cL(\Phi)} \|Gf\|_\Phi \|\cR'\|_{\cL(\Phi')} \| h\|_{\Phi'} 
& \geq 3 \mu_{w,q}  \|\cR Gf\|_\Phi \|\cR'h\|_{\Phi'} \\
& \geq 3\mu_{w,q}  \left( \int_\R \cR(Gf) \cR' h \, \ud t \right)^\frac{1}{q} \left(  \int_\T \cR(Gf) \cR' h \, \ud t \right)^\frac{1}{q'}\\
&\geq  \left( \int_\T G(m(\Delta) f)^q w \,\ud t \right)^\frac{1}{q} \left(  \int_\T \cR(Gf) \cR' h \ud t \right)^\frac{1}{q'}\\
&\geq \int_\T G(m(\Delta) f) h \, \ud t
\end{align*}
Since 
\[
\sup\left\{ [w_{Gf,h,q}]_{A_q}: {f\in \cP(X), h\in \Phi'}  \right\} <\infty,
\]
\[
 \mu:= \sup \left\{ 3 \mu_{w,q}: w=w_{Gf,h,q} \textrm{ with }  f\in \cP(X), h\in \Phi' \right\} <\infty.
\]
Therefore, for every $f\in \cP(X)$
\[
\| m(\Delta) f \|_{F^{s,q}_{\Phi}(\T; Y)}\leq \mu  \|\cR\|_{\cL(\Phi)} \|\cR'\|_{\cL(\Phi')} \| f \|_{F^{s,q}_{\Phi}(X)}.
\]
Since $ \cP(X)$ is dense in $F^{s,q}_{\Phi}(\T;X)$ (see Lemma \ref{per Besov}), $m(\Delta)$ restricts to an operator in $ \cL(F^{s,q}_{\Phi}(X), F^{s,q}_{\Phi}(Y))$. 
\end{proof}

\begin{remark} 
Note that \cite[Theorem 4.2]{ArBu04} shows that the boundedness of $M_\T$ on $\Phi'$ cannot be drop in the part $(ii)$ in general.  
Indeed, $Var(\ZZ; \cL(X))\nsubseteq \cM_{B^{s,\infty}_\Phi}(\T; X,X)$ for $\Phi = L^\infty$ and $s\in (0,1)$, when $X$ is not isomorphic to a Hilbert space.
  
\end{remark}

\subsection{Multipliers on general Banach function spaces}
The main aim of this section is to provide a result on the {\it extrapolation} of the boundedness of periodic Fourier multipliers; see Theorem \ref{ext th}. More precisely, we identify the classes of multipliers $m$ for which a priori knowledge that $m$ is in $\cM_{L^p}(\T; X,Y)$ for some $p\in (1,\infty)$ implies that $m\in \cM_\Phi(\T; X,Y)$ for a large class of Banach function spaces $\Phi$. 
In the next section, we apply this result to prove the phenomenon of the extrapolation of the $L^p$-maximal regularity for a large class of abstract evolution equations; see Theorems \ref{charact of mr}, \ref{charact of wp} and \ref{charact with Z}.
\begin{theorem}\label{ext th} Let $X$ and $Y$ be Banach spaces. 
Let $\Phi$ be a Banach function space over $(\T, \ud t)$ such that $M_\T$ is bounded on $\Phi$ and its dual $\Phi'$.
 Then the following assertions hold.
\begin{itemize}
\item [(i)]  Assume that  $m \in {\m}^3 (\ZZ; \cL(X,Y))$ and $m \in \cM_{L^p}(\T; X,Y) $ for some $p\in (1,\infty)$. Then, $m \in \cM_{\E}(\T; X,Y) $
for every 
\[
\E \in \left\{\Phi,\,B^{s,q}_\Phi,\, F^{s,r}_\Phi :\, s\in \R, q\in [1,\infty], r\in (1,\infty) \right\}.
\]
\item [(ii)] Assume that $m \in {\m}_{\mathcal{R}}^1 (\ZZ; \cL(X,Y))$ and, in addition,  $X$ and $Y$ have the $U\!M\!D$ property. Then, the conclusion of $(i)$ holds. 
\end{itemize}
\end{theorem}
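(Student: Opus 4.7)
The strategy is one of transference: extend $m$ from $\ZZ$ to $\R$, invoke the euclidean analogue, and then return to $\T$ via de Leeuw's couple property followed by the Rubio de Francia iteration algorithm. First I would dispose of the Besov and Triebel--Lizorkin cases by reduction to Theorem \ref{new}. Under the hypotheses of (i), the inclusion $\m^3\subset\m^2$, together with the standing assumption that $M_\T$ is bounded on $\Phi$ and on $\Phi'$, makes Theorem \ref{new}(i) and (iii) directly applicable; under the hypotheses of (ii), the chain $\m^1_{\cR}\subset\m^1\subset Var$, combined with the UMD assumption and Theorem \ref{new}(ii),(iii), yields the same. Thus the substance of the statement is the case $\E=\Phi$, which I address next.

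For the Banach function space case in (i), the plan has four steps. First, apply Theorem \ref{variant of Jodeit's th} with $\gamma=3$ to produce a compactly supported extension $\widetilde m=e(\lambda,m)$ of $m$ with $\widetilde m\in\m^3(\R;\cL(X,Y))$. Second, combine the hypothesis $m\in\cM_{L^p}(\T;X,Y)$ with the operator-valued Jodeit theorem (Theorem \ref{Jodeit th}, applied with $\Lambda=\lambda\,I_Y$) to deduce $\widetilde m\in\cM_{L^p}(\R;X,Y)$. Third, invoke the euclidean extrapolation principle proved on $\R$ in \cite{Kr22}: since $\widetilde m\in\m^3(\R;\cL(X,Y))\cap\cM_{L^p}(\R;X,Y)$, vector-valued Calder\'on--Zygmund theory yields $\widetilde m\in\cM_{L^p_v}(\R;X,Y)$ for every $v\in A_p(\R)$, with uniform norm control on subfamilies of uniformly bounded $A_p$-characteristic. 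Fourth, for $w\in A_p(\T)$ its $2\pi$-periodic extension $\widetilde w$ lies in $A_p(\R)$ by Remark \ref{strategy}(b), so the third step gives $\widetilde m\in\cM_{L^p_{\widetilde w}}(\R;X,Y)$; Lemma \ref{weighted Lp} then transfers this to $m\in\cM_{L^p_w}(\T;X,Y)$, with uniform norm bounds on bounded $A_p(\T)$-families. The periodic Rubio de Francia extrapolation of Lemma \ref{RdeF}(ii) finally produces $m\in\cM_\Phi(\T;X,Y)$.

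For part (ii) the plan is almost identical. The $L^p$-boundedness is no longer a hypothesis but follows on $\T$ from the $\m^1_{\cR}$-condition and UMD via Arendt--Bu / Weis-type periodic multiplier theorems. In the first step one uses Theorem \ref{variant of Jodeit's th} with $\gamma=1$ and, crucially, verifies that the resulting extension preserves the $\cR$-version of the Marcinkiewicz condition: the explicit formulas recorded inside the proof of Theorem \ref{variant of Jodeit's th} show that $e(\lambda,m)(t)$ and $t\,(e(\lambda,m))'(t)$ are finite scalar combinations of $m(k)$ and $m(k+1)-m(k)$ (with coefficients bounded uniformly in $t$), and such combinations preserve $\cR$-boundedness by the standard closure properties. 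The second step is then supplied by Weis's operator-valued multiplier theorem on $\R$ (UMD required), and the third and fourth steps go through verbatim.

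The main obstacle, in my view, is the third step: one needs a fully operator-valued, weighted Calder\'on--Zygmund extrapolation on the real line that is \emph{uniform} over $A_p$-families of bounded characteristic. This is precisely where the euclidean result of \cite{Kr22} is imported, and the uniformity it delivers is exactly what powers Lemma \ref{RdeF}, allowing us to reach all Banach function spaces $\Phi$ on which $M_\T$ acts boundedly (together with $\Phi'$) from the weighted $L^p$-scale.
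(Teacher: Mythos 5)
Your part (i), and your reduction of the Besov and Triebel--Lizorkin cases to Theorem \ref{new}, follow the paper's argument essentially step for step: Jodeit-type extension preserving $\m^3$ (Theorem \ref{variant of Jodeit's th}), transfer of the $L^p(\T)$-boundedness to $L^p(\R)$ via Theorem \ref{Jodeit th} with $\Lambda=\lambda I_Y$, the observation that $\widetilde m(D)$ is then a Calder\'on--Zygmund operator so that \cite[Theorem 1.6]{RuRuTo86} gives weighted bounds uniform over $A_p(\R)$-families of bounded characteristic, periodic extension of weights, Lemma \ref{weighted Lp}, and finally Lemma \ref{RdeF}. This is exactly the paper's proof and is correct. (Your route to the Besov/Triebel--Lizorkin cases in (ii) via $\m^1_{\cR}\subset\m^1\subset Var$ and Theorem \ref{new}(ii),(iii) is a slightly different, and legitimate, shortcut compared with the paper's use of Lemma \ref{aux obs}.)

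In part (ii), however, the claim that ``the third and fourth steps go through verbatim'' conceals a genuine gap. The third step of (i) depends on $\widetilde m(D)$ being a Calder\'on--Zygmund operator, and that uses the $\m^3$-regularity of the symbol to produce the standard (H\"ormander-type) kernel estimates. Under the hypotheses of (ii) the extension $\widetilde m$ only satisfies the $\m^1_{\cR}$-condition; one derivative of decay on the symbol does not yield the required kernel smoothness, so \cite{RuRuTo86} is not applicable and the uniform weighted bounds cannot be extracted this way. The paper closes this step instead with the weighted operator-valued multiplier theorem of Fackler--Hyt\"onen--Lindemulder \cite[Theorem 3.5(a)]{FaHyLi20}, which gives $\sup_{w\in\cW}\|\widetilde m(D)\|_{\cL(L^p_w(\R;X),L^p_w(\R;Y))}<\infty$ for every $\cW\subset A_p(\R)$ with uniformly bounded characteristic, directly from the $\m^1_{\cR}$-condition in UMD spaces --- after upgrading $\widetilde m\in\m^1(\R;\cL(X,Y))$ to the $\cR$-version by Kahane's contraction principle, exactly as you propose in your first step. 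With that substitution the rest of your argument (Lemma \ref{weighted Lp} followed by Lemma \ref{RdeF}) goes through, and your intermediate appeal to Weis's unweighted theorem on $\R$ becomes superfluous.
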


\begin{proof} $(i)$ The case $\E=B^{s,q}_\Phi$ and $\E= F^{s,q}_\Phi$ follows from Theorem \ref{new}. 
Therefore, let $\E=\Phi$. By Theorem \ref{variant of Jodeit's th} there exists an extension $\widetilde m := e(\Lambda, m)$ of $m$ on $\R$ such that $\widetilde m \in \widetilde{\m}^3 (\R; \cL(X,Y))$.  Moreover, Theorem \ref{Jodeit th} yields $\widetilde m \in \cM_{L^p}(\R; X,Y)$. Consequently, $\widetilde m(D)$ is a Calder\'on-Zygmund operator; see \cite[Proposition 4.4.2, p.254]{St93}, or \cite[Lemma 4.1]{Kr22}. In particular, by \cite[Theorem 1.6]{RuRuTo86}, for every $q\in (1,\infty)$ and for every $\widetilde{\cW}\subset A_q(\R)$ with $\sup_{\widetilde{w} \in \widetilde{\cW}} [\widetilde{w}]_{A_q(\R)}<\infty$ we have that 
\[
\sup_{\widetilde{w}\in \widetilde{\cW} }\|\widetilde m (D) \|_{\cL(L^q_{\widetilde{w}}(X), L^q_{\widetilde{w}}(Y))}<\infty.
\]
Let $\cW \subset A_q(\T)$ be such that $\sup_{w \in \cW} [w]_{A_q(\T)}<\infty$. By $\widetilde{w}$ we denote the periodic extension of $w$ on $\R$. Set $\widetilde{\cW} := \{\widetilde{w}: \, w \in \cW\}$. Then simple argumentation shows that $\widetilde{\cW} \subset A_q(\R)$ and there exists a constant $C > 0$ such that
\[
\sup_{\widetilde{w} \in \widetilde{\cW}} [\widetilde{w}]_{A_q(\R)} \leq C \sup_{w \in \cW} [w]_{A_q(\T)} < \infty.
\]
By Lemma \ref{weighted Lp} we get that 
\[
\sup_{w \in \cW} \|m(\Delta)\|_{\cL(L^q_{w}(X), L^q_{w}(Y))} \leq \sup_{\widetilde{w} \in \widetilde{\cW}} \|\widetilde m (D) \|_{\cL(L^q_{\widetilde{w}}(X), L^q_{\widetilde{w}}(Y))} < \infty.
\]
Therefore, by Lemma \ref{RdeF} we conclude that $\widetilde m_{|\ZZ} = m$ is in $\cM_{\Phi}(\T; X,Y)$.

$(ii)$ By Theorem \ref{variant of Jodeit's th} we find $\lambda \in \cC^\infty_c(\R)$ such that $\widetilde m :=e(\Lambda, m)$ (for $\Lambda := \lambda(\cdot)I_Y$) extends $m$ and is in $\m^1(\R; \cL(X,Y))$. By Kahane's contraction principle (see \cite[Proposition 2.5]{KuWe04}), we easily get that $\widetilde m$ satisfies, in fact, the $\m_{\mathcal{R}}^1$-condition. Now \cite[Theorem 3.5.(a)]{FaHyLi20} shows that for every $\cW \subset A_q(\R)$ with $\sup_{w\in \cW}[w]_{A_q(\R)}< \infty$
\[
\sup_{w\in \cW}\|\widetilde m(D)\|_{\cL(L^p_w(\R;X),L^p_w(\R;Y))} <\infty.
\]
Therefore, Lemma \ref{weighted Lp}, Lemma \ref{RdeF} and similar reasoning as in (i) give that $m$ is in $\cM_\Phi(\T; X,Y)$. 
The case of the Besov spaces follows now directly from Lemma \ref{aux obs}. 
For the Triebel-Lizorkin case, note that the functions $\psi_j \widetilde m$ satisfy the $\m^1_{\mathcal{R}}$-condition uniformly in $j\in \N_0$. Therefore, in this case, the proof mimics the arguments presented already in the proof of Theorem \ref{new}(iii). 
\end{proof}

\begin{remark} (a) In the context of applications, see Theorem \ref{mr thm}(c), Theorem \ref{ext th}(i) can be read as an extrapolation of the $L^p$-maximal regularity property. Indeed, as Theorem \ref{conditions} shows, for Fourier multipliers $m$ related to some evolution equations (see Subsection \ref{diff equ}), the fact that $m\in \cM_{L^p}(\R; X,Y)$ implies that $m$ satisfies Marcinkiewicz's condition $(\m^\gamma)$ of an arbitrary order $\gamma \in \N$.   

(b) The assumptions of Theorem \ref{ext th} should be confronted with \cite[Theorem 1]{ArBu04a}, which says that for every Banach space $X$, which is not isomorphic to a Hilbert space, there is a sequence $m: \ZZ  \rightarrow \cL(X)$ satisfying the $\m^\gamma$-condition for every $\gamma\in \N$, but $m$ is not in $\cM_{L^p}(\ZZ; X,X)$ ($p\in (1,\infty)$).  
\end{remark}

\section{Applications}\label{app}
In this section we apply the results developed above to study the solvability and maximal  regularity of an abstract second-order integro-differential equation, see \eqref{aee} below. We start with some preliminaries. 

Let $A,B,P$ be closed operators on a Banach space $X$. By $D_A, D_B, D_P$ we denote their domains equipped with the corresponding graph norms.
Let $\cA$, $\cB$ and $\cP$ denote the {\it evaluations} of operators $A$, $B$ and $P$ on $\cD'(D_A)$, $\cD'(D_B)$ and $\cD'(D_P)$, respectively. More precisely, $\mathcal{A} \in \mathcal{L}(\cD'(D_A),\cD'(X))$ is given by $(\mathcal{A}u)(\phi) = A(u(\phi))$ for every $u \in \cD'(D_A)$ and $\phi \in \cD$. The operators $\cB$ and $\cP$ are defined in the similar manner. 
Moreover, let $c\in \cD'(\cL(Z,X))$, where $Z$ is a Banach space continuously embedded in $X$. 

Let us consider the following abstract degenerated, second-order problem with the convolution term: 
\begin{equation}\label{aee}
\partial \mathcal{P} \partial u + \mathcal{B} \partial u + \mathcal{A}u + c \ast u = f \quad (\text{in } \cD'(X)) \tag{AEE}
\end{equation}
where $f \in \cD'(X)$ is a given $X$-valued distribution. The convolution term $c \ast u$ we interpret as the Fourier multiplier, i.e.  $c\ast u:= \hat c(\Delta) u$.  
For a given $f\in \cD'(X)$, a distribution $u\in \cD'(X)$ is called the {\it distributional solution} of \eqref{aee} if $u\in \cD'(D_A)\cap \cD'(Z)$, $\partial u \in \cD'(D_B)\cap \cD'(D_P)$ and \eqref{aee} holds in $\cD'(X)$. 

 Set 
\begin{equation}\label{space Y}
Y:=D_A\cap D_B \cap D_P \cap Z  
\end{equation}
 with the norm  
\[
 |y|_{Y}:= \max(|y|_A,|y|_B, |y|_P, |y|_Z) \quad (y\in Y).
\]

Our first result shows how the structure of \eqref{aee} affects the relation between the regularities of the symbols of Fourier multipliers, which are involved in the study of  solvability of \eqref{aee}, which we address below. 
To make this result applicable to the different situations (see Section \ref{last}), we need an abstract joint multiplier condition on two symbols. 
For $\gamma \in \N$ we say that a sequence $d:\ZZ\rightarrow \cL(Z,X)$ satisfies the $\m^\gamma$-{\it{condition with respect to a sequence}} $a:\ZZ\rightarrow \cL(X,Z)$, if 
\[
[d]_{\m^\gamma(a)}:= \max_{l=0,..., \gamma} \sup_{k\in \ZZ} \bigl\| k^l (\Delta^l d)(k) a(k+l) \bigr\|_{\cL(X)}<\infty.\quad \tag{$\m^\gamma(a)$}
\]
We write $d\in \m^\gamma(a)$, when the above holds.

\begin{remark}\label{remark about joint conditions}
 Of course, if $d\in \m^\gamma(\ZZ; \cL(Z,X))$ and $a \in l^\infty(\ZZ; \cL(X,Z))$, then $d\in \m^\gamma(a)$. However, under further information on the boundedness of $a$ one can provide more suitable conditions on $d$ to show that $d \in \m^{\gamma}(a)$. For example, if, in addition, we have that $(\cdot)a \in l^{\infty}(\mathcal{L}(X,Z))$, then $(\cdot)^{-1}d \in \m^{\gamma}(\ZZ; \cL(Z,X))$ yields $d\in \m^{\gamma}(a)$. To see it, notice that 
\[
\sup_{l = 0,...,\gamma} \sup_{k \in \ZZ} \|k^{l-1}(\Delta^l d)(k)\|_{\cL(Z,X)} < \infty \quad\textrm{ if and only if } \quad [(\cdot)^{-1}d]_{\m^{\gamma}(\ZZ; \cL(Z,X))} < \infty.
\]
Furthermore, note that $d \in \m^{\gamma - 1}(\ZZ; \cL(Z,X))$ implies $(\cdot)^{-1}d \in \m^{\gamma}(\ZZ; \cL(Z,X))$, but the converse does not hold in general. For instance, for $\gamma = 2$ and the  sequence $d(k) := k$  ($k \in \ZZ$) we have that $(\cdot)^{-1}d \in \m^2$ but $d$ does not satisfy the $\m^{1}$-condition.
We apply these observations below.
\end{remark}
We say that a sequence $d : \ZZ \to \cL(Z,X)$ satisfies the {\it{variational Marcinkiewicz condition with respect to a sequence}} $a:\ZZ\rightarrow \cL(X,Z)$, and write $d\in Var(a)$,  if 
\[
[d]_{Var(a)} := [d]_{\m^0(a)} + \sup_{j \geq 0} \sum_{2^j \leq |k| < 2^{j+1}} \|\Delta d(k)a(k+1)\|_{\cL(X)} < \infty \tag{$\textrm{Var}(a)$}
\]

We say that a sequence $d : \ZZ \to \cL(Z,X)$ satisfies the {\it{$\m^{\gamma}_{\cR}$-condition with respect to a sequence }} $a:\ZZ\rightarrow \cL(X,Z)$ (and write $d\in \m^{\gamma}_{\cR}(a)$), if for each $l = 0,1,...,\gamma$ the set $\{k^l(\Delta^l d)(k)a(k+l)\}_{k\in \ZZ}$ is $\cR$-bounded in $\cL(X)$.

\begin{theorem}\label{conditions} Let $A$, $B$, $P$, $c$ and $Y$ be as stated above. Assume that for every $k\in \ZZ \setminus \{0\}$ the operator 
\[
b(k):= -k^2P + ikB + A + \hat{c}(k)\in \cL(Y,X)
\] is invertible. Let 
\[
a(k):= b(k)^{-1}, \
a_0(k):= kBa(k), \
a_1(k):= k^2Pa(k), \
a_2(k):= ka(k)
\quad (k \in \ZZ \setminus \{0\})
\] 
and $a(0) = a_0(0) = a_1(0) = a_2(0) = 0 \in \cL(X)$.
Then, the following assertions hold.

\begin{itemize}
\item [(i)] Assume that $a\in l^\infty(\cL(X,Z))$ and $a_0, a_1\in l^\infty(\cL(X))$. Then, for every $\gamma \in \{1,2,3\}$, if $\hat{c} \in \m^\gamma(a)$ (respectively, $\hat{c} \in Var(a)$), then $a \in \m^\gamma (\ZZ;\cL(X,Y))$, $a_0, a_1 \in \m^\gamma (\ZZ; \cL(X))$  (respectively, $a\in Var(\ZZ; \cL(X,Y)$,  $a_0, a_1 \in Var(\ZZ; \cL(X))$). 

In addition, if the sequence $a_2 \in l^{\infty}(\cL(X))$, then $a_2\in \m^\gamma (\ZZ; \cL(X))$ (respectively, $a_2 \in Var(\ZZ; \cL(X)$).\\[-1ex]

\item [(ii)] The statement (i) holds in its $\cR$-bound reformulation, that is,  if, in addition, the sequences $a$, $a_0$ and $a_1$ are $\cR$-bounded, 
then for every $\gamma \in \{1,2, 3\}$ the fact that $\hat c$ satisfies the $\m_\cR^\gamma$-condition with respect to a sequence $a$, implies that the sequences $a$, $a_0$ and $a_1$ 
satisfy the $\m_\cR^\gamma$-condition. 

If, in addition,   $a_2$ is $\cR$-bounded then it satisfies the $\m_\cR^\gamma$-condition.
\end{itemize}
\end{theorem}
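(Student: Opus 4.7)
The proof rests on the algebraic identity $b(k)a(k)=I_X$ together with the discrete Leibniz rule for forward differences. Applying $\Delta^l$ to $b\cdot a = I_X$ and solving for $\Delta^l a(k)$ gives, for $l\geq 1$, the recursion
\[
\Delta^l a(k) = -\sum_{j=1}^{l}\binom{l}{j}\, a(k)\,(\Delta^j b)(k)\,(\Delta^{l-j}a)(k+j).
\]
A direct computation yields
\[
\Delta b(k) = -(2k+1)P + iB + \Delta\hat{c}(k), \quad \Delta^2 b(k) = -2P + \Delta^2\hat{c}(k), \quad \Delta^j b(k)=\Delta^j\hat{c}(k)\ (j\geq 3).
\]
This collapse at order three is precisely why the statement is restricted to $\gamma\in\{1,2,3\}$.

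The plan is to prove by induction on $l\in\{0,1,\dots,\gamma\}$ that $\{k^l \Delta^l a(k)\}_{k\in\ZZ}$ is uniformly bounded in $\cL(X,X)$. The base case $l=0$ follows from the embedding $Z\hookrightarrow X$ combined with the assumption $a\in l^\infty(\cL(X,Z))$. For the inductive step, substitute the explicit form of $\Delta^j b$ into the recursion and rewrite every occurrence of $B\, a(k+j)$ as $a_0(k+j)/(k+j)$ and every $P\, a(k+j)$ as $a_1(k+j)/(k+j)^2$. Since the scalar factors $k^l/(k+j)^s$ are uniformly bounded for all $1\leq j\leq l\leq 3$ and $s\in\{1,2\}$, each summand of $k^l\Delta^l a(k)$ becomes $a(k)$ composed with a product of an $\cL(X,X)$-bounded operator (a finite combination of $a_0(k+j)$, $a_1(k+j)$ and $k^j(\Delta^j\hat{c})(k)\, a(k+j)$, the last being controlled by the hypothesis $\hat{c}\in\m^\gamma(a)$) and a lower-order factor $k^{l-j}\Delta^{l-j}a(k+j)$, for which the inductive hypothesis applies. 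Reading the same argument in $\cL(X,Z)$ shows that $k^l\Delta^l a(k)$ is uniformly bounded in $\cL(X,Z)$ as well, because the leftmost factor $a(k)$ in every summand already lands in $Z$.

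To obtain the $\m^\gamma$-property of $a_0$, $a_1$ and (when it is assumed bounded) $a_2$, expand $k\mapsto k\cdot Ba(k)$, $k\mapsto k^2\cdot Pa(k)$ and $k\mapsto k\cdot a(k)$ via the Leibniz rule, using $\Delta(k)=1$, $\Delta^2(k)=0$ and $\Delta^j(k^2)=0$ for $j\geq 3$; each resulting term is either bounded directly by the hypotheses on $a_0,a_1,a_2$ or by the bounds just established for $k^l\Delta^l a(k)$, after inverting $Ba(k)=a_0(k)/k$ and $Pa(k)=a_1(k)/k^2$. Finally, to complete $a\in\m^\gamma(\ZZ;\cL(X,Y))$, use the identity $Aa(k)=I+a_1(k)-i\,a_0(k)-\hat{c}(k)a(k)$ obtained from $b(k)a(k)=I_X$; the $\m^\gamma$-bounds for the right-hand side follow from what has already been proved, together with the Leibniz expansion $\Delta^l(\hat{c}a)(k)=\sum_{j=0}^{l}\binom{l}{j}(\Delta^j\hat{c})(k)(\Delta^{l-j}a)(k+j)$ and the hypothesis $\hat{c}\in\m^\gamma(a)$.

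The variational version in (i) is obtained by running the same arguments while replacing every pointwise sup-bound of the form $\sup_k \|k\,\Delta(\cdot)(k)\|$ by its dyadic sum $\sup_j\sum_{2^j\leq|k|<2^{j+1}}\|\Delta(\cdot)(k)\|$; the scalar prefactors such as $k/(k+1)$ and $(2k+1)/(k+1)^2$ remain uniformly bounded in each dyadic block, so the estimates pass through unchanged. Part (ii), the $\cR$-bounded version, relies on the same algebraic identities with uniform norm bounds replaced by $\cR$-bounds: Kahane's contraction principle absorbs the bounded scalar multipliers, and stability of $\cR$-boundedness under finite sums and composition handles the products. The main technical obstacle will be the combinatorial bookkeeping for $\gamma=3$, but because $\Delta^j b=\Delta^j\hat{c}$ for $j\geq 3$ the recursion remains finitely generated and no conceptually new ideas are required.
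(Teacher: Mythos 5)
Your overall strategy --- differencing the identity $b(k)a(k)=I_X$ and inducting on the order of the difference --- is the same as the paper's, and several ingredients match it exactly: the $\gamma=1$ step, the observation that $\Delta^j b=\Delta^j\hat c$ for $j\ge 3$, and the treatment of the $D_A$-component of the $Y$-norm via $Aa(k)=I_X+a_1(k)-ia_0(k)-\hat c(k)a(k)$. However, the inductive step for $\gamma\in\{2,3\}$ has a genuine gap caused by the ordering of your Leibniz expansion. In your recursion the summand with $1\le j<l$ reads $a(k)\,(\Delta^j b)(k)\,(\Delta^{l-j}a)(k+j)$: the unbounded factor $\Delta^j b$ is followed by a \emph{differenced} copy of $a$, so the substitutions $Pa(m)=a_1(m)/m^2$, $Ba(m)=a_0(m)/m$ and the hypothesis on $\Delta^j\hat c(\cdot)a(\cdot+j)$ do not apply to it, and treating $k^{l-j}\Delta^{l-j}a(k+j)$ as a black box bounded by the inductive hypothesis is one power of $k$ short. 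Concretely, for $l=2$, $j=1$ the $P$-part of $k^2\Delta^2a(k)$ is (up to constants) $k^2(2k+1)\,a(k)\,P\Delta a(k+1)$; from $a_1\in l^\infty$ alone one only gets $\|P\Delta a(k+1)\|\le\|Pa(k+2)\|+\|Pa(k+1)\|=O(k^{-2})$, so this term is $O(k)$, not $O(1)$. The same loss occurs for the $B$-part, and again in your derivation of $a_0\in\m^1$, where $k^2B\Delta a(k)$ is estimated only through the black-box bound on $k\Delta a(k)$.

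The paper avoids this by writing the Leibniz identity in the opposite order, e.g.
\[
(\Delta^2a)(k) = -2(\Delta a)(k)(\Delta b)(k+1)a(k+2) - a(k)(\Delta^2 b)(k)a(k+2),
\]
so that \emph{every} occurrence of $\Delta^jb$ is immediately right-multiplied by an undifferenced $a(k+l)$; then $(\Delta^jb)(k+l-j)a(k+l)$ is $O(k^{-j})$ in $\cL(X)$ by the hypotheses on $a_0$, $a_1$ and $\hat c$, while the leftover factor $(\Delta^{l-j}a)(k)$ on the left is controlled by the previous step of the induction. Your scheme can be repaired either by switching to this ordering, or by fully unfolding your recursion (substituting the lower-order identities into $(\Delta^{l-j}a)(k+j)$ until only products of the form $a(\cdot)\prod_i\bigl[(\Delta^{j_i}b)(\cdot)a(\cdot)\bigr]$ remain), or by running a simultaneous induction in which $a_0,a_1\in\m^{l}(\ZZ;\cL(X))$ is established before passing to $\Delta^{l+1}a$; but as written the step from $l=1$ to $l=2$ does not close. (A minor further point: proving $a\in\m^\gamma(\ZZ;\cL(X,Y))$ also requires bounding the $D_B$- and $D_P$-components of $k^l\Delta^l a(k)$, which you do not address; and the restriction to $\gamma\le 3$ is not forced by the vanishing of $\Delta^3 b-\Delta^3\hat c$ --- that identity persists for all higher orders --- but simply reflects what the applications need.)
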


\begin{proof} First we show that $a\in l^\infty (\cL(X,Y))$. Indeed, note that each condition imposed on $\hat{c}$ implies that $\hat{c}(\cdot)a(\cdot) \in l^\infty(\cL(X))$. 
Since 
\[
I_X:= b(k) a(k) = -k^2Pa(k) + ikBa(k) + Aa(k) + \hat{c}(k)a(k),
\] where $I_X$ denotes the identity operator on $X$, we get $Aa(\cdot) \in l^\infty (\cL(X))$. Moreover, $Z\hookrightarrow X$ implies that $a\in l^\infty (\cL(X))$, which gives our claim that  $a\in l^\infty (\cL(X,Y))$.

(i) First we prove the statement for $a$ and $\gamma \in \{1,2,3\}$. 
For $\gamma = 1$, note that by Leibniz' rule for difference operators  we have
\begin{equation}\label{delta a as delta b 1}
(\Delta a)(k) = -a(k)(\Delta b)(k)a(k+1)\quad (k \in \ZZ),
\end{equation}
and one easily gets that
\begin{equation}\label{delta b}
(\Delta b)(k) = -(2k + 1)P + iB + (\Delta \hat{c})(k) \quad (k \in \ZZ) .
\end{equation}
Thus, by our assumptions on $a_0$, $a_1$ and $\hat c$, 
\begin{equation}\label{b 1}
(k(\Delta b)(k)a(k+1))_{k \in \ZZ} \in \l^{\infty}(\cL(X)).
\end{equation}
Consequently, since $a\in l^\infty(\cL(X,Y))$,
\[
(k(\Delta a)(k))_{k \in \ZZ} \in \l^{\infty}(\cL(X,Y)).
\]
For $\gamma = 2$, since the $\m^2$-condition implies the $\m^1$-condition, it is sufficient to show that 
\[
(k^2(\Delta^2 a)(k))_{k \in \ZZ} \in \l^{\infty}(\cL(X,Y)).
\]
For, note that for all $k\in \ZZ$ we have
\begin{equation}\label{delta a as delta b 2}
(\Delta^2a)(k) = -2(\Delta a)(k)(\Delta b)(k+1)a(k+2) - a(k)(\Delta^2 b)(k)a(k+2). 
\end{equation}
Therefore, by the step for $\gamma = 1$ (see \eqref{b 1}), it is enough to check that  
\begin{equation}\label{b 2}
(k^2(\Delta^2 b)(k)a(k+2))_{k \in \ZZ}\in l^\infty(\cL(X)),
\end{equation}
But this again follows directly from our assumptions, since 
\[
(\Delta^2 b)(k) = -2P + (\Delta^2 \hat{c})(k) \quad (k \in \ZZ).
\]
For $\gamma = 3$, similarly  it suffices to show that
\[
\left(k^3(\Delta^3 a)(k)\right)_{k \in \ZZ} \in \l^{\infty}(\cL(X,Y)).
\]
For, note that for all $k \in \ZZ$ we have
\begin{align}\label{delta a as delta b 3}
(\Delta^3 a)(k) & = -3(\Delta^2 a)(k)(\Delta b)(k+2)a(k+3) - a(k)(\Delta^3 b)(k)a(k+3) \\
\nonumber & \quad -3(\Delta a)(k)(\Delta^2 b)(k+1)a(k+3).
\end{align}
By the steps for $\gamma = 1,2$ (see \eqref{b 1} and \eqref{b 2}), it is sufficient to show the boundedness of 
\[
\left(k^3(\Delta^3 b)(k)a(k+3)\right)_{k \in \ZZ} \subset \cL(X).
\] 
However, since $\Delta^3 b = \Delta^3 \hat{c}$, it follows directly from the assumption on $\hat c$. This finishes the proof of the statement about $a$.

 Now we turn to the sequence $a_0$.
For $\gamma = 1$, note that 
\begin{equation}\label{delta a0}
(\Delta a_0)(k) = iBa(k+1) + ikB(\Delta a)(k).
\end{equation}
Thus, combining \eqref{delta a as delta b 1} and \eqref{b 1} with the boundedness of $a_0$, we get 
\[
\left(k(\Delta a_0)(k)\right)_{k \in \ZZ}\in l^\infty(\cL(X)).
\]
For $\gamma = 2$, note that
\[
(\Delta^2 a_0)(k) = 2iB(\Delta a)(k+1) + ikB(\Delta^2 a)(k) \qquad (k\in \ZZ).
\]
Therefore, the boundedness of 
\[
\left(k^2(\Delta^2 a_0)(k)\right)_{k \in \ZZ}\subset \cL(X)
\]
follows from \eqref{delta a as delta b 2},  \eqref{b 2}, and the assumption on $a_0$.\\
Finally, for $\gamma = 3$, note that 
\[
(\Delta^3 a_0)(k) = 3iB(\Delta^2 a) + ikB(\Delta^3 a)(k) \qquad (k\in \ZZ).
\]
Hence,  the boundedness of 
\[
\left(k^3(\Delta^3 a_0)(k)\right)_{k \in \ZZ}\subset \cL(X)
\]
is implied by the formula \eqref{delta a as delta b 3} and our assumptions on $a_0$ and $\hat{c}$. It completes the proof of the statement about $a_0$.
The fact that $a_1 \in \m^{\gamma}(\ZZ;\cL(X))$ follows from very similar arguments. Therefore, we omit it.\\
Now we assume that $\hat{c} \in Var(\ZZ;a)$. Then, by \eqref{delta a as delta b 1} and \eqref{delta b}, for every $j\in \N$ we have that
\begin{align*}
\sum_{2^j \leq |k| < 2^{j+1}} \|\Delta a(k) \|_{\cL(X,Y)} & \leq \|a\|_{l^{\infty}(\cL(X,Y))} \Big( \sum_{2^j \leq |k| < 2^{j+1}} \frac{1}{|k|} \|k (2k+1)Pa(k+1) \|_{\cL(X)}\\
& \quad +  \sum_{2^j \leq |k| < 2^{j+1}} \frac{1}{|k|} \| k Ba(k+1)\|_{\cL(X)}\\
& \quad + \sum_{2^j \leq |k| < 2^{j+1}} \|(\Delta \hat{c})(k)a(k+1) \|_{\cL(X)} \Big).
\end{align*}
Hence, by the boundedness of $a_0, a_1$ and the condition imposed on $\hat{c}$, we get that $a \in Var(\cL(X,Y))$. To show that $a_0 \in Var(\ZZ;\cL(X))$, using the formulas \eqref{delta a as delta b 1},\eqref{delta b} and \eqref{delta a0}, for every $j\in \N$  we get that
\begin{align*}
\sum_{2^j \leq |k| < 2^{j+1}} &\|\Delta a_0(k) \|_{\cL(X,Y)}\\
 & = \sum_{2^j \leq |k| < 2^{j+1}} \|Ba(k+1) + kB(\Delta a)(k) \|_{\cL(X,Y)}\\ 
& \leq 
 \sum_{2^j \leq |k| < 2^{j+1}} \frac{1}{|k|} \| k Ba(k+1)\|_{\cL(X)}\\
 &\quad  + \|a_0\|_{\l^{\infty}(\cL(X,Y))} \Big( \sum_{2^j \leq |k| < 2^{j+1}} \frac{1}{|k|} \| k (2k+1)Pa(k+1) \|_{\cL(X)}\\
 &\quad  +   \sum_{2^j \leq |k| < 2^{j+1}} \frac{1}{|k|} \| k Ba(k+1)\|_{\cL(X)}\\
 &\quad + \sum_{2^j \leq |k| < 2^{j+1}} \|(\Delta \hat{c})(k)a(k+1) \|_{\cL(X)} \Big).
\end{align*}
Therefore,  the claim about $a_0$ follows from assumptions imposed on $a_0,a_1$ and $\hat{c}$. In a similar manner we prove that $a_1 \in Var(\ZZ; \cL(X))$. The proof of the additional statement about $a_2$ mimics that for $a_0$ (formally, note that $a_2 = a_0$, when $B = I$).

(ii) The proof of the $\cR$-bounded version follows from the same arguments as presented in (i) and the fact that for any Banach spaces $X,Y,Z$, if $\tau,\sigma \subset \cL(X,Y)$ and $\rho \subset \cL(Y,Z)$ are $\cR$-bounded, then the families $\tau + \sigma$ and $\tau \circ \rho$ are $\cR$-bounded.
\end{proof}

\begin{remark}\label{inverse} By the open mapping theorem, the assumption on the operators $b(k)$ made in Theorem \ref{conditions}, i.e. the invertibility of $b(k)$ considered as an operator in $\cL(Y,X)$, is equivalent to say that $b(k)$ considered as an operator on $X$ is bijective and its inverse is in $\cL(X)$. Indeed, the norm $|\cdot|_Y$ is stronger then the graph norm of $b(k)$. 

\end{remark}

 We call a distributional solution of \eqref{aee}, the {\it strong} solution, if $u\in W^{1,1}(X)$ with $u(t)\in D_A$, $u'(t)\in D_B\cap D_P$ for a.e. $t\in \R$, and 
\[
Au,\, Bu',\,  c\ast u \in L^1(X), \textrm{ and } Pu' \in W^{1,1}(X).
\] 

Recall that $W^{1,1}(X) \subset \cC(X)$. Moreover, note that the existence of a strong solution of \eqref{aee} requires that $f\in L^1(X)$, and then \eqref{aee} reads as  
\[
(Pu'(t))' + Bu'(t) + Au(t) + (c\ast u)(t) = f(t) \quad \textrm{ for a.e.  } t\in \R
\]
with $u(0) = u(2\pi)$ and $(Pu')(0) = (Pu')(2\pi)$. The '{\it{prime}}' in the symbols $(Pu'(t))'$ and $Bu'(t)$ refers to the classical derivative, which exists almost everywhere on $\R$ in the topology of $X$. In the context of Theorem \ref{conditions}, note that the multiplier sequence $a$ corresponds to the solution operator of \eqref{aee}, $f\mapsto a(\Delta)f$, the sequences $a_0$ and $a_1$ correspond to the summands $\cB\partial u$ and $\partial \cP \partial u$ in \eqref{aee}, and $a_2$ relates to the strong differentiability of $a(\Delta)f$.

In the following lemma we abstract some facts which allow to adapt multiplier results from the previous section to the study of the solvability of \eqref{aee}.

\begin{lemma}\label{lem to mr} 

(i) Assume that $A$, $B$ and $P$ are closed, linear operators on a Banach spaces $X$ and $c\in\cD'(\cL(Z,X))$, where $Z$ is a Banach space with $Z\hookrightarrow X$. Then, the distributional solution $u$ of \eqref{aee} is a strong one if and only if 
\[
u,\, \partial u,\, \mathcal{P} \partial u ,\, \partial \mathcal{P} \partial u,\, \mathcal{B} \partial u,\,  \mathcal{A}u,\,  c \ast u \in L^1(X).
\]
(ii) Assume that for every function $f=e_k\otimes x$, where $k\in \ZZ$ and $x\in X$, the problem \eqref{aee} has a unique distributional solution.  Then, for every $k\in \ZZ$ the operator $b(k)=-k^2 P + ik B + A + \hat{c}(k)$ is bijective with the bounded inverse.   
\end{lemma}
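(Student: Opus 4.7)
The plan is to treat the two parts separately; both reduce to well-known techniques.

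For (i), the \emph{only if} direction follows directly from the definition of a strong solution: $u\in W^{1,1}(X)$ together with the pointwise regularity $u(t)\in D_A$, $u'(t)\in D_B\cap D_P$ a.e.\ identifies $\cA u$, $\cB\partial u$ and $\cP\partial u$ with the $L^1(X)$-functions $Au(\cdot)$, $Bu'(\cdot)$ and $Pu'(\cdot)$ respectively; the hypothesis $Pu'\in W^{1,1}(X)$ places $\partial\cP\partial u$ in $L^1(X)$, and $c\ast u\in L^1(X)$ is inherited from the equation.

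For the \emph{if} direction of (i), the key technical step I would isolate first is a closedness-plus-mollification lemma: if $T$ is a closed operator on $X$ and $u\in \cD'(D_T)$ satisfies $u,\cT u\in L^1(\T;X)$, then $u(t)\in D_T$ and $(\cT u)(t)=Tu(t)$ for a.e.\ $t\in\T$. One proves it by taking a Dirac sequence $(\rho_n)\subset\cD$: the smoothings $u_n:=\rho_n\ast u$ satisfy $u_n(t)=u(\rho_n(t-\cdot))\in D_T$ with $Tu_n=\rho_n\ast \cT u$, both converging in $L^1(X)$ (hence pointwise a.e.\ along a subsequence) to $u$ and $\cT u$, so closedness of $T$ yields the claim. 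Applying it to $T=A$ gives $\cA u=Au(\cdot)$ a.e.; the hypothesis $u,\partial u\in L^1(X)$ upgrades $u$ to $W^{1,1}(X)$, so $\partial u=u'$ a.e.; applying the lemma with $T=B$ and $T=P$ to $\partial u$ gives $u'(t)\in D_B\cap D_P$ a.e.\ with $\cB\partial u=Bu'(\cdot)$ and $\cP\partial u=Pu'(\cdot)$; and $\cP\partial u,\partial\cP\partial u\in L^1(X)$ place $Pu'\in W^{1,1}(X)$. The distributional equation then descends to an a.e.\ identity in $X$, with $f\in L^1(X)$ as the sum of its four summands.

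For (ii), my plan is to pass to Fourier coefficients. Any distributional solution $u$ of \eqref{aee} belongs in particular to $\cD'(Y)$, so evaluating at the test function $e_{-j}\in\cD$ gives $\hat u(j)\in Y$ for every $j\in\ZZ$; projecting \eqref{aee} onto the $j$-th Fourier mode then yields $b(j)\hat u(j)=\hat f(j)$ in $X$. Fixing $k\in\ZZ$ and taking $f=e_k\otimes x$ with arbitrary $x\in X$, existence of a solution for every $x$ forces $b(k):Y\to X$ to be surjective, while uniqueness (already at $x=0$) forces each $b(j)$ to be injective on $Y$: a nonzero $v\in Y\cap\ker b(j)$ would produce a second nontrivial solution $e_j\otimes v$ of the homogeneous problem. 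Letting $k$ range over $\ZZ$ gives bijectivity of $b(k):Y\to X$ for every $k$. Since $b(k)\in\cL(Y,X)$ is immediate from the definition of $|\cdot|_Y$ and the boundedness of $\hat c(k)\in\cL(Z,X)$, the open mapping theorem yields $b(k)^{-1}\in\cL(X,Y)$, which by Remark~\ref{inverse} is the asserted conclusion.

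The only point requiring some care is the mollification step in part (i): one must verify that the smoothings $\rho_n\ast u$ genuinely inherit values in $D_T$ (which is where the distributional hypothesis $u\in\cD'(D_T)$ is used, rather than just $u\in L^1(X)$), and that pointwise application of $T$ commutes with mollification via the distributional action. Everything else is routine bookkeeping.
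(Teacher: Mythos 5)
Your proposal is correct and follows essentially the same route as the paper: part (i) via closedness of the operators combined with averaging against a Dirac sequence (the paper phrases this as the Lebesgue differentiation theorem applied to the identity $\int_\T v\phi\,\ud t = P\bigl(\int_\T u'\phi\,\ud t\bigr)$), and part (ii) via Fourier coefficients, with surjectivity from existence, injectivity from uniqueness, and the open mapping theorem. One small imprecision in (ii): a distributional solution need not lie in $\cD'(Y)$ — the definition only requires $\partial u\in\cD'(D_B)\cap\cD'(D_P)$, so $\hat u(0)$ is only guaranteed to lie in $D_A\cap Z$; this is harmless because $b(0)=A+\hat c(0)$ involves neither $B$ nor $P$, but the paper is careful to treat the mode $k=0$ separately.
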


\begin{proof}
$(i)$ The necessity is readily seen. Combining Lebesgue's differentiation theorem and closedness of operators $A$, $B$ and $P$ it is straightforward to show that this condition is sufficient. For instance, if $u$ is a distributional solution of \eqref{aee}
such that $u\in W^{1,1}(X)$ and the distribution $\cP  \partial u$ is represented by $v \in L^1(X)$, then  for every $\phi \in \cD$
\[
\int_{\T} v \phi \ud t = (\cP \partial u) (\phi) = P \left(\int_\T u' \phi \ud t \right), 
\] and both integrals are convergent in $X$. The Lebesgue differentiation theorem and closedness of $P$ give that for a.e. $t\in [0,2\pi]$, $u'(t)\in D_P$ and $Pu'(t) = v(t)$. Since $v\in L^1(X)$, we get that $Pu' \in L^1(X)$.  
$(ii)$ For the surjectivity, first note that if $u\in\cD'(X)$ is a distributional solution of \eqref{aee} then $\hat u (k)\in D_A\cap Z$ for all $k\in \ZZ$ and $\hat u(k) \in D_B\cap D_P$ for all $k\in \ZZ\setminus \{0\}$. Indeed, note that $\partial u \in \cD'(D_B)$ if and only if $u-\hat u(0)\in \cD'(D_B)$, and similarly for $P$. 
Therefore, if $u$ is the corresponding solution of \eqref{aee} for $f=e_k\otimes x$, where $x\in X$ and $k \neq 0$, then $\hat{u}(k) \in Y$, and in the case when $k=0$, $\hat u (0) \in D_A\cap Z$. Testing the both sides of \eqref{aee} on $e_{-k}\in \cD$ ($k\in \ZZ$) we get that $b(k) \hat u (k) = x$, which yields the suriectivity of $b(k)$.

Suppose now that for some $k\in \ZZ$, $b(k)y = 0 $ for some $y$ in the domain of $b(k)$. 
Then, the function $u:=e_k\otimes y$ satisfies 
\[
\partial \cP \partial u +  \cB \partial u + \cA u + c\ast u = 0.
\]  Therefore, the postulated uniqueness yields $u\equiv 0$, that is, $y=0$. Consequently, the operators $b(k)$, $k\in \ZZ$, are bijective.

Finally, since $b(k)\in \cL(Y,X)$ for $k\neq 0$, and  $b(0)\in\cL(D_A\cap Z, X)$, and $Y, D_A\cap Z \hookrightarrow X$, the boundedness of the inverse of $b(k)$ on $X$ follows from the open mapping theorem.  It finishes the proof of $(ii)$.    
\end{proof}

For $\E\in \{\Phi, B^{s,q}_\Phi, F^{s,q}_\Phi: \Phi \textrm{ a Banach function space over } (\T, \ud t) \}$,
we say that the problem \eqref{aee} has $\E$-{\it{maximal regularity}}, if for every $f\in \E(X)$ there exists a unique distributional solution $u$ of \eqref{aee} such that 
\[
u-\hat u(0)\in \E(Y) \quad \textrm{and} \quad \partial \mathcal{P} \partial u, \, \mathcal{B} \partial u,\, \mathcal{A}u, \,  c \ast u  \in \E(X). 
\]

Moreover, for $\E\subset L^1$, we say that \eqref{aee} is $\E$-{\it{well-posed}}, if for every $f\in \E(X)$ the problem \eqref{aee} has a unique strong solution such that 
\begin{equation}\label{strong}
u,\, u',\, Pu',\, (Pu')',\, Bu',\,  Au,\, c\ast u \in \E(X).
\end{equation}

Note that by Lemma \ref{lem to mr}(i), if $\E\subset L^1$ and \eqref{aee} has $\E$-maximal regularity, then \eqref{aee} is $\E$-well-posed if and only if 
\[
u,\, \partial u,\, \cP \partial u \in \E(X). 
\]

The following result is the main result of this section. In the points (a) and (b) we address the questions of the maximal regularity and well-posedness of the problem \eqref{aee} under different assumptions on the geometry of the underlying Banach space $X$, as well as multiplier conditions imposed on corresponding multiplier symbols. 
The point (c) clarifies the phenomenon of {\it extrapolation} of $L^p$-maximal regularity and $L^p$-well-posedness for such problem.    

For the simplicity of its formulation, let $\cL_M$ denote the family of all Banach function spaces over $(\T, \ud t)$ on which the Hardy-Littlewood operator $M_\T$ is bounded,~{i.e.}
\[
\cL_M :=  \{ \Phi \textrm{ a Banach space over }(\T, \ud t): M_\T \textrm{ is bounded on }\Phi\}.
\]

\begin{theorem}\label{mr thm} Let $X$ and $Z$ be Banach spaces such that $Z\hookrightarrow X$. Let $A$, $B$ and $P$ be closed, linear operators on a Banach space $X$,  $c\in \cD'(\T; \cL(Z,X))$ and $Y$ has the meaning specified in \eqref{space Y}. For every $k\in \ZZ$ let
\[
b(k) := -k^2 P + ik B + A + \hat{c}(k). 
\]

\emph{(a)} Assume that for every $k\in \ZZ$ the operator $b(k)$ is bijective and 
 $a\in l^\infty(\cL(X,Z))$ and $a_0, a_1\in l^\infty(\cL(X))$, where 
 \[
a(k):= b(k)^{-1}, \quad a_0(k):= ikBa(k), \quad a_1(k):= -k^2 Pa(k)\quad (k\in \ZZ \setminus \{0\}).
\] 
and $a(0) = a_0(0) = a_1(0) = 0$.
Then, the following assertions hold.\\[-1ex] 
\begin{itemize}
\item [(a1)] If $\hat{c} \in \m^2(\ZZ; a) $, then for every 
\[
\E \in \bigl\{ B^{s,q}_\Psi, F^{s,r}_\Phi: s\in \R, q\in [1,\infty], r\in (1,\infty),\,\, \Phi,\Phi', \Psi \in \cL_M \bigr\}. 
\]
the problem \eqref{aee} has the $\E$-maximal regularity. 
In addition, if the sequence $(ka(k))_{k\in \ZZ}$ is bounded in $\cL(X)$ and $\E\subset L^1$, then \eqref{aee} is $\E$-well-posed.\\[-1ex] 

\item [(a2)] If $\hat{c} \in Var(\ZZ;a)$ and, in addition, $X$ has $U\!M\!D$-property, then the conclusion of \emph{(a1)} holds for each 
\[
\E\in \bigl\{ B^{s,q}_\Phi, \,F^{s,r}_\Phi: s\in \R, q\in [1,\infty], r\in (1,\infty),\,\, \Phi, \Phi' \in \cL_M   \bigr\}.   
\] 
\end{itemize} 

\emph{(b)} Assume that for every $k\in \ZZ$ the operator $b(k)$ is bijective, the sequences $a$, and $a_0$, $a_1$ are $\cR$-bounded in $\cL(X,Z)$ and $\cL(X)$, respectively,  and $X$ has the $U\!M\!D$ property. 

If $\hat{c} \in \m_\cR^1(\ZZ; a)$, then for every 
\[
\E \in \bigl\{ \Phi, \,B^{s,q}_\Phi,\, F^{s,r}_\Phi: s\in \R, q\in [1,\infty], r\in (1,\infty), \,\, \Phi, \Phi' \in \cL_M \bigr\}
\]  the problem \eqref{aee} has $\E$-maximal regularity.\\[-2ex] 

In addition, if $(ka(k))_{k\in \ZZ}$ is $\cR$-bounded in $\cL(X)$, then for every $\E\subset L^1$, the problem \eqref{aee} is $\E$-well-posed. 

In particular, the last statement holds for each
\[
\E \in \bigl\{ \Phi,\, B^{s,q}_\Phi,\, F^{s,r}_\Phi: s>0, q\in [1,\infty], r\in (1,\infty),\,\, \Phi, \Phi' \in \cL_M \bigr\}.
\]

\emph{(c)} 
Assume that the problem \eqref{aee} has $L^p$-maximal regularity for some $p\in (1,\infty)$ (respectively, \eqref{aee} is $L^p$-well-posed).

If $\hat{c} \in \m^3(\ZZ; a)$, then for every 
\[
\E \in \bigl\{ \Phi, \,B^{s,q}_\Psi,\, F^{s,q}_\Phi: s\in \R, q\in [1,\infty], r\in (1,\infty), \,\, \Phi, \Phi', \Psi \in \cL_M \bigr\}
\] it has $\E$-maximal regularity (respectively, in addition, if $\E\subset L^1$, it is $\E$-well-posed).

\end{theorem}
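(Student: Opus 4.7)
The plan is to reduce all three parts to the periodic multiplier theorems of Section \ref{section 5} applied to the four symbols
$$
a(k):=b(k)^{-1},\quad a_0(k):=ikBa(k),\quad a_1(k):=-k^2Pa(k),\quad a_2(k):=ka(k)
$$
(each extended by $0$ at $k=0$), which govern respectively the solution itself, the lower-order term $\cB\partial u$, the principal term $\partial\cP\partial u$ and the strong derivative $\partial u$. Given $f\in\E(X)$, I take as candidate
$$
u:=a(\Delta)f+e_0\otimes b(0)^{-1}\hat f(0)\in\cD'(X),
$$
so that $b(k)\hat u(k)=\hat f(k)$ for every $k\in\ZZ$; this is a distributional solution of \eqref{aee}, and uniqueness is immediate since any distributional solution of the homogeneous equation has its Fourier coefficients annihilated by every $b(k)$, which is bijective.

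I would handle the multiplier estimates on $a,a_0,a_1$ scale by scale. For part (a), Theorem \ref{conditions}(i) converts the assumption $\hat c\in\m^\gamma(a)$ (resp.\ $\hat c\in Var(a)$) together with the stipulated $l^\infty$-bounds on $a\in\cL(X,Z)$ and $a_0,a_1\in\cL(X)$ into $a\in\m^\gamma(\ZZ;\cL(X,Y))$ and $a_0,a_1\in\m^\gamma(\ZZ;\cL(X))$ (resp.\ into the corresponding $Var$-classes), whereupon Theorem \ref{new}(i)--(iii) gives $a(\Delta)\in\cL(\E(X),\E(Y))$ and $a_0(\Delta),a_1(\Delta)\in\cL(\E(X))$ for every admissible $\E$; the $U\!M\!D$ hypothesis of (a2) is consumed precisely at the use of Theorem \ref{new}(ii). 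For (b), Theorem \ref{conditions}(ii) produces the $\m^1_\cR$-regularity of $a,a_0,a_1$ and Theorem \ref{ext th}(ii) closes the case uniformly on $\Phi$, $B^{s,q}_\Phi$, $F^{s,r}_\Phi$. For (c), Lemma \ref{lem to mr}(ii) applied to $L^p$-maximal regularity gives bijectivity of every $b(k)$, while testing the identities $u-\hat u(0)=a(\Delta)f$, $\cB\partial u=a_0(\Delta)f$, $\partial\cP\partial u=a_1(\Delta)f$ on $f=e_k\otimes x$ simultaneously supplies membership in $\cM_{L^p}$ and the pointwise bounds $a\in l^\infty(\cL(X,Y))$, $a_0,a_1\in l^\infty(\cL(X))$ that Theorem \ref{conditions}(i) demands as hypotheses. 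Theorem \ref{conditions}(i) with $\gamma=3$ then upgrades to the $\m^3$-condition, and Theorem \ref{ext th}(i) delivers boundedness on the full target scale.

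With these multiplier estimates in place, $u-\hat u(0)=a(\Delta)f\in\E(Y)$ yields $\cA u\in\E(X)$ by continuity of $A:Y\to X$, while $\cB\partial u=a_0(\Delta)f$ and $\partial\cP\partial u=a_1(\Delta)f$ are directly in $\E(X)$. The remaining term $c\ast u=f-\partial\cP\partial u-\cB\partial u-\cA u$ is then forced into $\E(X)$ by the equation itself, which finishes $\E$-maximal regularity. For the well-posedness clauses I would invoke Lemma \ref{lem to mr}(i) to reduce matters to showing $u,\partial u,\cP\partial u\in\E(X)$. The boundedness of $M_\T$ on $\Phi$ implies $L^\infty\subset\Phi$, so constants belong to $\E(X)$ and $u=(u-\hat u(0))+e_0\otimes\hat u(0)\in\E(X)$; the extra assumption that $(ka(k))_{k\in\ZZ}$ is bounded (resp.\ $\cR$-bounded) in $\cL(X)$ combined with the last clause of Theorem \ref{conditions} places $a_2$ in the appropriate Marcinkiewicz class, giving $\partial u=ia_2(\Delta)f\in\E(X)$. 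Finally $\cP\partial u$ has symbol $ikPa(k)=-ia_1(k)/k$ (vanishing at $k=0$), which I would write as the pointwise product of the $\E$-multiplier $a_1$ with the sequence $k\mapsto 1/(ik)\chi_{\ZZ\setminus\{0\}}(k)$; the latter lies in $\m^\gamma$ for every $\gamma\in\N$ and is thus an $\E$-multiplier by the same theorems, so $\cP\partial u\in\E(X)$. The last bullet of (b) then follows from the embeddings $B^{s,q}_\Phi,F^{s,r}_\Phi\subset L^1$ valid for $s>0$.

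The main difficulty lies in the extrapolation step of part (c): from the bare assumption of $L^p$-maximal regularity one must simultaneously extract the $L^p$-multiplier property of $a,a_0,a_1$ \emph{and} the pointwise $l^\infty$-bounds required as hypothesis by Theorem \ref{conditions}(i), before the $\m^3(a)$-condition on $\hat c$ can be converted into the $\m^3$-condition on $a,a_0,a_1$ needed by Theorem \ref{ext th}(i). Once that bridge is in place, the extrapolation machinery of Section \ref{section 5} transports the conclusion across the full scale of admissible Banach function spaces and their Besov/Triebel--Lizorkin scales without further obstruction.
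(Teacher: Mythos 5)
Your proposal is correct and follows essentially the same route as the paper: the same candidate solution $u=a(\Delta)f+e_0\otimes b(0)^{-1}\hat f(0)$, the same reduction of all three parts to Theorem \ref{conditions} combined with Theorems \ref{new} and \ref{ext th}, and in part (c) the same extraction of both the $\cM_{L^p}$-property and the $l^\infty$-bounds of $a,a_0,a_1$ from $L^p$-maximal regularity (the paper makes the closed graph theorem explicit at that step) before applying Lemma \ref{lem to mr}. The only cosmetic deviations are that you get $\cA u\in\E(X)$ from the boundedness of $A:Y\to X$ rather than from the multiplier $Aa(\cdot)$, and you treat the symbol $ikPa(k)$ of $\cP\partial u$ as a product of multipliers instead of verifying its Marcinkiewicz condition directly.
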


\begin{proof} (a1) Combining Theorem \ref{conditions} with Theorem \ref{new}(i) and (iii)  we infer that $a\in \cM_\E (\T; X,Y)$ and $a_0, a_1, Aa(\cdot) \in \cM_\E(\T; X,X)$.
 In particular,  for every $f\in \E(X)$, if we put $u:=a(\Delta) f + b(0)^{-1} \hat f(0)$, then $u \in \cD'(D_A)\cap \cD'(Z)$ and $\partial u \in \cD'(Y)\subset \cD'(D_B)\cap \cD'(D_P)$. It is easy to check that $u$ is a distributional solution of \eqref{aee}. Since the Fourier coefficients of distributions are uniquely determined, the injectivity of the operators $b(k)$, $k\in \ZZ$, gives that $u$ is the unique solution of \eqref{aee}. 

Moreover, note that
\[
\partial \cP \partial u = a_1(\Delta) f, \, \cB \partial u = a_0(\Delta)f, \, \cA u = [A a(\cdot)](\Delta) f \in \E(X)
\]
and, since $f\in \E(X)$, 
\[
 c\ast u = \hat{c}(\Delta)a(\Delta)f = \partial \cP \partial u + \cB \partial u + \cA u - f \in \E(X). 
 \]
It proves the $\E$-maximal regularity of \eqref{aee}. 

For the additional statement, again by Theorem \ref{conditions} and Theorem \ref{new}(i), we get that $a_2(k):= ik a(k)$ and $a_3(k):= ik Pa(k)$, $k\in \ZZ$, are in $\cM_\E(\T; X,X)$. Since $Y\hookrightarrow X$ and $D_A\cap Z\hookrightarrow X$, we have that $a\in \cM_\E(\T; X,X)$. Hence, if $f\in \E(X)$, then for the corresponding solution $u = a(\Delta) f + b(0)^{-1}\hat{f}(0)$ of \eqref{aee} we get that
\[
u,\, \partial u = a_2(\Delta)f,\, \cP\partial u = a_3(\Delta)f  \in \E(X).
\]
If, in addition, $\E\subset L^1$, Lemma \ref{lem to mr}(i) shows that $u$ is a strong solution and \eqref{strong}, that is, the $\E$-well-posedness of \eqref{aee}. 
It completes the proof of (a1).

Relying on Theorem \ref{conditions}, the parts (ii) and (iii) of Theorem \ref{new}, and Lemma \ref{lem to mr}(i), the proof of the statement (a2) follows the similar arguments presented for the proof of (a1). Therefore, we omit them. 

For the statement (b), note that Theorem \ref{ext th}(ii) reduces the proof of (b) to the arguments provided in the proof of (a1). 

Finally, for the proof of (c), by Lemma \ref{lem to mr}(ii), the $L^p$-maximal regularity of \eqref{aee} implies that for each $k\in \ZZ$, $b(k)$ is bijective with a bounded inverse. That is, $a(k) = b(k)^{-1} \in \cL(X,Y)$ for all $k\in \ZZ\setminus \{0\}$ and $a(0) = b(0)^{-1}\in \cL(X,D_A\cap Z)$ (see Remark \ref{inverse}(i)). It is straighforward to check that the corresponding solution operator for \eqref{aee} is given by the Fourier multiplier $a(\Delta)$. 

We show that  $a\in l^\infty(\cL(X,Y))$ and $a_0$, $a_1 \in l^\infty(\cL(X))$.  The $L^p$-maximal regularity implies that the maps 
\[
L^p(X) \ni f \mapsto a(\Delta) f \in L^p(Y) \textrm{ and } L^p(X)\ni f \mapsto \cB \partial a(\Delta) f,\,\,\partial \cP \partial a(\Delta) f \in L^p(X),
\] 
 are well-defined and (by the uniqueness) linear. By the closed graph theorem, it is straightforward to see that these maps are bounded. Moreover, since $a(\Delta) (e_k\otimes x) = e_k\otimes b(k)^{-1}x$ for every $x\in X$ and $k\in \ZZ$, we infer that for $k\neq 0$
\[
|b^{-1}(k)x|_{Y} =\| a(\Delta) (e_k\otimes x)\|_{L^p(Y)} \leq \| a(\Delta)\|_{\cL(L^p(X),L^p(Y))} |x|_X   
\]
and similarly
\[
|kB b(k)^{-1}x|_X \leq \|\cB \partial a(\Delta)\|_{\cL(L^p(X))}|x|_X, \,\,
|k^2P b(k)^{-1}x|_X \leq \|\partial \cP \partial a(\Delta)\|_{\cL(L^p(X))}|x|_X. 
\]
It gives our claim. Combining it with the assumption on $\hat{c}$, by Theorem \ref{conditions} and Theorem \ref{ext th}(i), we are in the position to use the same arguments as in (a1) to get the desired assertion on the $\E$-maximal regularity of \eqref{aee}. 
For the additional statement, similarly as above, the $L^p$-well-posedness of \eqref{aee} implies that the sequences $a_2(k) = ika(k)$ and $a_3(k) = ikPa(k)$ ($k\in \ZZ$) are in $\cM_\E(\T; X,X)$. Hence, the proof follows the corresponding lines of the proof of (a1). This completes the proof of (c). 
\end{proof}

\begin{remark}
Since for each $\frak M \in \{ \m^\gamma, \m^\gamma_\cR, Var\}$, if $(k B a(k))_{k\in \ZZ}, (k^2 P a(k))_{k\in \ZZ}$ are in $\frak M(\ZZ, \cL(X))$ then  
$(B a(k))_{k\in \ZZ}, (k P a(k))_{k\in \ZZ}, (P a(k))_{k\in \ZZ} \in \m(\ZZ; \cL(X))$, the proof of Theorem \ref{mr thm} shows that the $\E$-maximal regularity of \eqref{aee} concluded in each of its statements (a), (b), (c) (no additional assumptions on $(ka(k))_{k\in \ZZ}$ is required) gives that the problem 
\[
\partial^2 \cP u + \partial\cB u + \cA u + c\ast u = f 
\] is $\E$-well-posed whenever $\E\subset L^1$. That is, for each $f\in \E(X)$ there exists a unique distributional solution $u$ such that  $u-\hat u(0) \in \E(Y)$, $Pu\in W^{2,1}(X)\subset \cC^1(X)$, $Bu\in W^{1,1}(X)\subset \cC(X)$ and 
\[
(Pu)''(t) + (Bu)' + Au(t) + c\ast u(t) = f(t)\qquad \textrm{ a.e. } t \in [0,2 \pi].
\]   
In this context note that if $\partial^2 \cP u, \partial \cB u \in B^{s,q}_\Phi(X)\subset \Phi(X)$ for some $s>0$, then by Proposition \ref{per Besov}(iii), we immediately get $Pu \in W^{2,1}(X)$ and $Bu\in W^{1,1}(X)$.
\end{remark}

As was already mentioned in Remark \ref{remark about joint conditions} one can impose separate conditions on $a$ and $\hat c$ which imply their joint condition $\hat c \in \m^\gamma (a)$ (or others). It leads to the following characterisation of the maximal regularity and well-posedness of the problem \eqref{aee}. Here, we assume that $Z=X$; see Remark \ref{rem on Z} below. We start with the characterization and extrapolation of the maximal regularity of \eqref{aee}.

\begin{theorem}\label{charact of mr} Let $A$, $B$ and $P$ be closed, linear operators on a Banach space $X$. Let $c\in \cD'(\cL(X))$.

\noindent \emph{(a)} Assume that  $\hat c  \in \m^2(\ZZ;\cL(X))$. 
 Then, the following assertions are equivalent.

\begin{itemize}
\item [(i)] The problem \eqref{aee} has $\E$-maximal regularity for every $\E$ such that  
\[
\E \in \bigl\{ B^{s,q}_\Psi, F^{s,r}_\Phi: s\in \R,  q\in [1,\infty], r\in (1,\infty),\,\, \Phi,\Phi', \Psi \in \cL_M \bigr\}.
\]

\item [(ii)] The problem \eqref{aee} has $B^{s,q}_{L^p}$-maximal regularity for some $p\in (1,\infty)$ and $q\in [1,\infty]$.

\item [(iii)] For every $k\in \ZZ$ the operator $b(k):=-k^2P + ik B + A + \hat c(k)$ is bijective and the sequences 
\[
\left( b(k)^{-1}\right)_{k\in \ZZ}, \quad (k B b(k)^{-1})_{k\in \ZZ}, \quad (k^2P b(k)^{-1})_{k\in \ZZ}
\]
are bounded in $\cL(X)$.\\
\end{itemize}

\noindent \emph{(b)} Assume that $X$ has the $U\!M\!D$ property and  $\hat c\in \m^1_\cR(\ZZ;\cL(X))$.
 Then, the following assertions are equivalent.

\begin{itemize}
\item [(i)] The problem \eqref{aee} has $\E$-maximal regularity for every $\E$ such that  
\[
\E \in \bigl\{ \Phi,\, B^{s,q}_\Psi,\, F^{s,r}_\Phi: s\in \R,  q\in [1,\infty], r\in (1,\infty),\,\, \Phi, \Phi', \Psi \in \cL_{M} \bigr\}.
\]

\item [(ii)] The problem \eqref{aee} has $L^p$-maximal regularity for some $p\in (1,\infty)$.
 
\item [(iii)] For every $k\in \ZZ$ the operator $b(k):=-k^2P + ik B + A + \hat c(k)$ is bijective and the sequences 
\[
(b(k)^{-1})_{k\in \ZZ}, \quad (k B b(k)^{-1})_{k\in \ZZ}, \quad (k^2P b(k)^{-1})_{k\in \ZZ}
\]
are $\cR$-bounded in $\cL(X)$ .
\end{itemize}
\end{theorem}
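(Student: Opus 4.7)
My plan is to close the cycle $(iii)\Rightarrow(i)\Rightarrow(ii)\Rightarrow(iii)$ in both parts. The implication $(iii)\Rightarrow(i)$ will reduce to Theorem \ref{mr thm} after verifying its hypotheses, $(i)\Rightarrow(ii)$ is a trivial specialisation, and the interesting content lies in $(ii)\Rightarrow(iii)$, which I will handle by combining Lemma \ref{lem to mr}(ii) with a standard ``necessity of multiplier conditions'' argument obtained by testing on the trigonometric monomials $e_k\otimes x$.

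For $(iii)\Rightarrow(i)$ in part $(a)$, since $Z=X$, the space $Y$ in Theorem \ref{mr thm} reduces to $D_A\cap D_B\cap D_P$, and the required bound $a\in l^\infty(\cL(X,Z))$ is exactly the first condition in $(iii)$. To upgrade the individual condition $\hat c\in\m^2(\ZZ;\cL(X))$ to the joint Marcinkiewicz condition $\hat c\in\m^2(a)$ needed by Theorem \ref{mr thm}(a1), I will simply compose: for $l=0,1,2$,
\[
\sup_{k\in\ZZ}\|k^l(\Delta^l\hat c)(k)\,a(k+l)\|_{\cL(X)}\;\leq\;[\hat c]_{\m^2}\,\|a\|_{l^\infty(\cL(X))}<\infty,
\]
so Theorem \ref{mr thm}(a1) yields the $\E$-maximal regularity for each admissible $\E$. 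Part $(b)$ is analogous, invoking preservation of $\cR$-boundedness under composition to get $\hat c\in\m^1_\cR(a)$, followed by Theorem \ref{mr thm}(b). The implication $(i)\Rightarrow(ii)$ follows immediately by specialising, in part $(a)$, to $\E=B^{0,q}_{L^p}$ with any $p\in(1,\infty)$, $q\in[1,\infty]$ (note $L^p\in\cL_M$), and, in part $(b)$, to $\E=L^p$ with $p\in(1,\infty)$.

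For $(ii)\Rightarrow(iii)$, I will first apply Lemma \ref{lem to mr}(ii) to the inputs $f=e_k\otimes x$ to obtain that each $b(k)$ is bijective with bounded inverse $a(k)$; uniqueness of distributional solutions for these inputs follows from uniqueness of the regular one combined with injectivity of the $b(k)$'s (if $b(k)y=0$ for $y\in Y$, then $e_k\otimes y$ is a non-trivial $\E$-regular solution for $f=0$). The corresponding unique regular solution of \eqref{aee} is $u=e_k\otimes a(k)x$, hence $\cB\partial u=e_k\otimes ikBa(k)x$ and $\partial\cP\partial u=-e_k\otimes k^2Pa(k)x$. Using the elementary identity $\|e_k\otimes y\|_{L^p(X)}=(2\pi)^{1/p}|y|_X$ and its Besov counterpart $\|e_k\otimes y\|_{B^{0,q}_{L^p}(X)}\simeq |y|_X$ (uniformly in $k$), the maximal regularity estimate $\|u\|_{\E(Y)}+\|\cB\partial u\|_{\E(X)}+\|\partial\cP\partial u\|_{\E(X)}\leq C\|f\|_{\E(X)}$ translates into the pointwise bounds
\[
|a(k)x|_X+|kBa(k)x|_X+|k^2Pa(k)x|_X\;\leq\;C\,|x|_X
\]
uniformly in $k\in\ZZ$ and $x\in X$, which is exactly condition $(iii)$ of part $(a)$.

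The main obstacle lies in upgrading these pointwise bounds to $\cR$-bounds for part $(b)$. Here, for each of the three $L^p$-bounded multipliers $m(\Delta)\in\{a(\Delta),\,\cB\partial a(\Delta),\,\partial\cP\partial a(\Delta)\}$, any finite $F\subset\ZZ$ and family $(x_k)_{k\in F}\subset X$, I will apply $m(\Delta)$ pointwise in $\omega$ to the function $f(\omega,t):=\sum_{k\in F}\epsilon_k(\omega)e^{ikt}x_k$, where $(\epsilon_k)$ is a Steinhaus sequence on a probability space $(\Omega,\mathbb{P})$, integrate in $\omega$ via Fubini, and use that $(\epsilon_k e^{ikt})_k$ has the same joint distribution as $(\epsilon_k)_k$ for every fixed $t\in\T$. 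Combined with Kahane's inequality (giving equivalence of Steinhaus and Rademacher $\cR$-bounds), this will yield
\[
\Big\|\sum_k\epsilon_k m(k)x_k\Big\|_{L^p(\Omega;X)}\leq \|m(\Delta)\|_{\cL(L^p(\T;X))}\,\Big\|\sum_k\epsilon_k x_k\Big\|_{L^p(\Omega;X)},
\]
which is precisely the $\cR$-boundedness required in $(iii)$ of part $(b)$. I note that the $U\!M\!D$ hypothesis on $X$ in part $(b)$ will only be used in the direction $(iii)\Rightarrow(i)$, through Theorem \ref{mr thm}(b).
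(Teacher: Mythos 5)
Your proposal is correct and follows essentially the same route as the paper's proof: $(iii)\Rightarrow(i)$ via Theorem \ref{mr thm} together with the observation of Remark \ref{remark about joint conditions}, $(i)\Rightarrow(ii)$ by specialisation, and $(ii)\Rightarrow(iii)$ by testing on $e_k\otimes x$; your Steinhaus randomization in part (b) is a correct direct proof of the fact the paper merely cites from \cite{ClPr01,ArBu02}, namely that every sequence in $\cM_p(\ZZ;X,Y)$ is $\cR$-bounded. Two points where the paper is slightly more careful and your write-up should be too: the a priori estimate $\|u\|_{\E(Y)}+\|\cB\partial u\|_{\E(X)}+\|\partial\cP\partial u\|_{\E(X)}\leq C\|f\|_{\E(X)}$ is not part of the definition of $\E$-maximal regularity and must first be obtained by applying the closed graph theorem to the (linear, by uniqueness) solution maps; and in $(ii)\Rightarrow(iii)$ of part (a) one should use the exact normalization $\|e_k\otimes y\|_{B^{s,q}_{L^p}(X)}=\bigl(\sum_{j}2^{sjq}\psi_j(k)^q\bigr)^{1/q}|y|_X$ for general $s$ (the $k$-dependent factor is the same for $f$ and for $u$, so it cancels), rather than restricting to $s=0$.
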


\begin{remark}\label{rem on charact mr}
(a)  It is easily seen that for \eqref{aee} with $B=I \in\cL(X)$ one can relax the assumption on $\hat c$ in the statement (a) and (b) of Theorem \ref{charact of mr} to $(\cdot)^{-1}\hat c \in \m^2(\ZZ;\cL(X))$ in (a), and to $(\cdot)^{-1}\hat c \in \m^1_\cR(\ZZ;\cL(X))$ in (b), respectively. 

Furthermore, in the case when $P=I\in \cL(X)$, one can further weaken the assumption on $\hat c$, namely, to $(\cdot)^{-2}\hat c \in \m^2(\ZZ;\cL(X))$ in (a), and to $(\cdot)^{-2}\hat c \in \m^1_\cR(\ZZ;\cL(X))$ in (b), respectively; cf. also Remark \ref{remark about joint conditions}.

 (b) The assertion (ii) in Theorem \ref{charact of mr}(a) can be replaced with the following one 
{\it
\begin{itemize}
\item [(ii')] The problem \eqref{aee} has $\E$-maximal regularity for some $\E$ such that
\[
\E \in \bigl\{B^{s,q}_\Psi,\, F^{s,r}_\Phi: s\in \R,  q\in [1,\infty], r\in (1,\infty),\,\, \Phi,\Phi', \Psi \in \cL_M \bigr\}.
\]
\end{itemize}  }
Indeed, one can easily check that the maximal regularity of \eqref{aee} with respect to every such $\E$ implies the boundedness of the sequences stated (iii). It is not clear if $\E$-maximal regularity of \eqref{aee} for general $\E$ yields the $\cR$-boundedness of these sequences.

(c) In a similar manner to that in Theorem \ref{charact of mr} one can formulate the statement corresponding to $(a2)$ of Theorem \ref{mr thm}. Its proof follows the same arguments. We leave it to the interested reader. 

\end{remark}

 \begin{proof}[Proof of Theorem \ref{charact of mr}]
(a) Of course (i)$\Rightarrow$(ii). For (ii)$\Rightarrow$(iii), since $e_k\otimes x \in B^{s,q}_{L^p}(X)$ for every $k\in \ZZ$ and $x\in X$ (see, e.g. Proposition \ref{per Besov}(i)), by Lemma \ref{lem to mr}(ii) we get that for all $k\in \ZZ$ the operator $b(k)$ is bijective with inverse in $\cL(X,Y)$ when $k\neq 0$, and in $\cL(X, Z)$ when $k=0$. 
Analogously as in the proof of Theorem \ref{mr thm}(c), by the closed graph theorem, we infer that the maps 
\[
B^{s,q}_{L^p}(X)\ni f \mapsto a(\Delta) f \in B^{s,q}_{L^p}(Z)  \textrm{ and }
\]
\[
B^{s,q}_{L^p}(X)\ni f \mapsto \cB \partial a(\Delta) f,\,\, \partial \cP \partial a(\Delta) f \in B^{s,q}_{L^p}(X)
\]
are bounded, where $a(k):=b^{-1}(k)$, $k\in \ZZ$. 
Since for every $k\in \ZZ$ and $x\in X$
\[
\|e_k\otimes x\|_{B^{s,q}_{L^p}(X)} = \Big( \sum_{j\geq 0} 2^{sjq} \psi_j(k)^q \Big)^{\frac{1}{q}} |x|_X, 
\] we get the boundedness of desired sequences in (iii). The proof of (iii)$\Rightarrow$(i) follows directly from Theorem \ref{mr thm}(a1); see Remark \ref{remark about joint conditions}.

The proof of (b) mimics the same arguments, we only recall here that each sequence in $\cM_p(\ZZ, X,Y)$ (for arbitrary Banach spaces $X$ and $Y$) is necessarily $\cR$-bounded; see \cite{ClPr01} or \cite{ArBu02}.
\end{proof}

Now we formulate analogous result for the well-posedness of \eqref{aee}.  Its proof follows the lines of the proof of Theorem \ref{charact of mr} with straightforward modifications. Therefore, we omit it.

\begin{theorem}\label{charact of wp} Let $A$, $B$ and $P$ be closed, linear operators on a Banach space $X$. Let $c\in \cD'(\cL(X))$.

\noindent\emph{(a)} Assume that  $( k^{-1}\hat c(k))_{k\in \ZZ} \in \m^2(\ZZ;\cL(X))$. 
Then, the following assertions are equivalent.

\begin{itemize}

\item [(i)] The problem \eqref{aee} is $\E$-well-posed for every $\E\subset L^1$ such that  
\[
\E \in \bigl\{ B^{s,q}_\Psi, F^{s,r}_\Phi: s\in \R,  q\in [1,\infty], r\in (1,\infty),\,\, \Phi,\Phi', \Psi \in \cL_M \bigr\}
\]

\item [(ii)] The problem \eqref{aee} is $B^{s,q}_{L^p}$-well-posed for some $p\in (1,\infty)$ and $q\in [1,\infty]$.

\item [(iii)] For every $k\in \ZZ$ the operator $b(k):=-k^2P + ik B + A + \hat c(k)$ is bijective and the sequences 
\[
\left(k b(k)^{-1}\right)_{k\in \ZZ}, \,\, (k B b(k)^{-1})_{k\in \ZZ}, \,\, (k^2P b(k)^{-1})_{k\in \ZZ}
\]
are bounded in $\cL(X)$.
\end{itemize}
\noindent\emph{(b)}  Assume that $X$ has the $U\!M\!D$ property and  $( k^{-1} \hat c (k))_{k\in\ZZ} \in \m^1_\cR(\ZZ;\cL(X))$. Then, the following assertions are equivalent.

\begin{itemize}

\item [(i)] The problem \eqref{aee} is $\E$-well-posed for every $\E$ such that  
\[
\E \in \bigl\{ \Phi,\, B^{s,q}_\Psi,\, F^{s,r}_\Phi: s\in \R,  q\in [1,\infty], r\in (1,\infty),\,\, \Phi, \Phi', \Psi \in \cL_{M} \bigr\}.
\]

\item [(ii)] The problem \eqref{aee}  is $L^p$-well-posed for some $p\in (1,\infty)$.

\item [(iii)] For every $k\in \ZZ$ the operator $b(k):=-k^2 P + ik B + A + \hat c(k)$ is bijective and the sequences 
\[
(k b(k)^{-1})_{k\in \ZZ}, \quad (k B b(k)^{-1})_{k\in \ZZ}, \quad (k^2P b(k)^{-1})_{k\in \ZZ}
\]
are $\cR$-bounded in $\cL(X)$. 
\end{itemize}
\end{theorem}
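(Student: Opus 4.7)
The plan is to follow the three-step scheme of the proof of Theorem \ref{charact of mr}, adding the extra inputs dictated by well-posedness (namely, that $u$, $\partial u$, and $\cP \partial u$ also lie in $\E(X)$). The implication $(i)\Rightarrow (ii)$ is immediate by specializing $\E$. For $(ii)\Rightarrow (iii)$, Lemma \ref{lem to mr}(ii) will supply the bijectivity of each $b(k)$, so $a(k):=b(k)^{-1}$ is a well-defined operator in $\cL(X)$. Next, the closed graph theorem applied to the linear maps
\[
B^{s,q}_{L^p}(X) \ni f \mapsto \partial a(\Delta)f,\ \cB \partial a(\Delta)f,\ \partial \cP \partial a(\Delta)f \in B^{s,q}_{L^p}(X)
\]
(well-defined by the postulated well-posedness) will yield their boundedness; testing on the trigonometric monomials $e_k\otimes x$ then produces the required $l^\infty$-bounds on $(ka(k))$, $(kBa(k))$, and $(k^2 Pa(k))$, in complete parallel with the argument carried out for Theorem \ref{charact of mr}.

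For $(iii)\Rightarrow(i)$ the strategy is to invoke Theorem \ref{mr thm}(a1) together with its additional conclusion on well-posedness; for this, two hypotheses need to be verified. First, that $a \in l^\infty(\cL(X,Z))$: since $Z=X$, this reduces to $l^\infty(\cL(X))$, which follows from $(ka(k))\in l^\infty$ by division by $k$. Second, that $\hat c \in \m^2(a)$: thanks to Remark \ref{remark about joint conditions}, the assumption $(k^{-1}\hat c(k)) \in \m^2(\ZZ; \cL(X))$ together with $(ka(k))\in l^\infty(\cL(X))$ will yield precisely this joint condition (the extra factor of $k$ in $a$ compensates for the factor of $k^{-1}$ on $\hat c$). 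With these hypotheses in place, Theorem \ref{mr thm}(a1) delivers $\E$-maximal regularity for every $\E$ listed in (i); the supplementary assumption $(ka(k))\in l^\infty$ present in that theorem then upgrades this to $\E$-well-posedness whenever $\E \subset L^1$, which is exactly (i).

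Part (b) will run along the same lines with ``$\cR$-bounded'' substituted for ``bounded'' throughout and Theorem \ref{mr thm}(b) used in place of (a1). Step $(ii)\Rightarrow(iii)$ additionally uses the classical fact that every periodic $L^p$-Fourier multiplier is automatically $\cR$-bounded (see \cite{ClPr01, ArBu02}), while in $(iii)\Rightarrow (i)$ the passage from separate $\cR$-conditions to the joint $\m^1_\cR$-condition on $\hat c$ relative to $a$ will be obtained via Kahane's contraction principle, analogously to the scalar situation. The main subtlety in the entire argument is precisely this joint-condition step: checking that the weaker separate hypotheses $(k^{-1}\hat c(k)) \in \m^2$ (or $\m^1_\cR$) and $(ka(k)) \in l^\infty$ (or $\cR$-bounded) genuinely combine to yield the Marcinkiewicz-type condition on $\hat c$ relative to $a$ that Theorem \ref{mr thm} actually requires --- this bridge is supplied by Remark \ref{remark about joint conditions} and is what permits the weakening of hypotheses in Theorem \ref{charact of wp} compared with Theorem \ref{charact of mr}.
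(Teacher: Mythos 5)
Your proposal is correct and follows exactly the route the paper intends: the authors omit the proof of Theorem \ref{charact of wp}, stating that it follows the proof of Theorem \ref{charact of mr} with straightforward modifications, and your write-up supplies precisely those modifications — the closed graph argument now applied to $f\mapsto \partial a(\Delta)f$ as well, the passage from $(\cdot)^{-1}\hat c\in\m^2$ (resp.\ $\m^1_\cR$) plus the boundedness (resp.\ $\cR$-boundedness) of $(ka(k))_{k\in\ZZ}$ to the joint condition $\hat c\in\m^2(a)$ (resp.\ $\m^1_\cR(a)$) via Remark \ref{remark about joint conditions}, and the invocation of the additional well-posedness clause of Theorem \ref{mr thm}(a1) (resp.\ (b)). No gaps.
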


\begin{remark}\label{rem on charact of wp}
The similar statements to those stated in Remark \ref{rem on charact mr} hold in the context of Theorem \ref{charact of wp}. In particular, for \eqref{aee} with $P=I \in\cL(X)$ in \eqref{aee}, one can relax the assumption on $\hat c$ in the statements (a) and (b) of Theorem \ref{charact of wp} to $(\cdot)^{-2}\hat c \in \m^2(\ZZ;\cL(X))$ in (a) and to $(\cdot)^{-2}\hat c \in \m^1_\cR(\ZZ;\cL(X))$ in (b), respectively.

It is worth noticing that according to Remark \ref{remark about joint conditions},  if $\hat{c} \in \m^1(\ZZ;\cL(X))$, then $(\cdot)^{-1}\hat{c} \in \m^2(\ZZ;\cL(X))$. Thus, for $c$ such that $\hat{c} \in \m^1(\ZZ;\cL(X))$ the equivalence in Theorem \ref{charact of wp}(a) also holds. This is a strictly stronger condition, but in some situations it might be easier to verify.
\end{remark}

\begin{remark}\label{rem on Z}
If we modify the notion of the well-posedness of \eqref{aee} (which sometimes can be motivated by a special form of the convolution therm; see, e.g. \eqref{delay op}) by replacing the condition $u\in W^{1,1}(X)$ by a stronger one, e.g. $u\in W^{1,1}(Z)$, where $Z$ is a Banach space such that $Z \hookrightarrow X$, then the above characterizations, Theorems \ref{charact of mr} and \ref{charact of wp} (where $Z=X$) can be easily adjusted to such modified setting. The proofs of such modifications have the same pattern as those of Theorems \ref{charact of mr} and \ref{charact of wp}. Here, we do not provide such reformulation of the above characterization results, which corresponds to such stronger notion of well-posedness of \eqref{aee}.

For our further purposes we present a counterpart of Theorems \ref{charact of mr} and \ref{charact of wp} for a convolutor $c\in \cD'(\cL(Z,X))$ with and arbitrary $Z\hookrightarrow X$, which we apply in the next section.   
\end{remark}

\begin{theorem}\label{charact with Z} Let $A$, $B$ and $P$ be closed, linear operators on a Banach space $X$. Let $c\in \cD'(\cL(Z,X))$, where $Z$ is a Banach space 
continuously embedded in $X$.

\emph{(a)} Assume that  $\hat c \in \m^2(\ZZ;\cL(Z,X))$. Then, the following assertions are equivalent.

\begin{itemize}
\item [(i)] The problem \eqref{aee} has $\E$-maximal regularity (respectively, is $\E$-well-posed) for every $\E$ such that  
\[
\E \in \bigl\{ B^{s,q}_\Psi, F^{s,r}_\Phi: s\in \R,  q\in [1,\infty], r\in (1,\infty),\,\, \Phi,\Phi', \Psi \in \cL_M \bigr\}
\]
(respectively, in addition, $\E\subset L^1$).

\item [(ii)] The problem \eqref{aee} has $B^{s,q}_{L^p}$-maximal regularity (respectively, is $B^{s,q}_{L^p}$-well-posed) for some $p\in (1,\infty)$ and $q\in [1,\infty]$.

\item [(iii)] For every $k\in \ZZ$ the operator $b(k):=-k^2P + ik B + A + \hat c(k)$ is bijective and the sequences 
\[
\left( b(k)^{-1}\right)_{k\in \ZZ}\subset \cL(X,Z), \,\, (k B b(k)^{-1})_{k\in \ZZ}\subset \cL(X), \,\, (k^2P b(k)^{-1})_{k\in \ZZ}\subset \cL(X) 
\]
are bounded (respectively,  in addition, $\left(kb(k)^{-1}\right)_{k\in \ZZ}\subset \cL(X)$ is bounded).

\end{itemize}

\emph{(b)} Assume that $X$ has the $U\!M\!D$ property and  $\hat c\in \m^1_\cR(\ZZ;\cL(Z,X))$. Then, the following assertions are equivalent.

\begin{itemize}

\item [(i)] The problem \eqref{aee} has $\E$-maximal regularity (respectively, is $\E$-well-posed) for every $\E$ such that  
\[
\E \in \bigl\{ \Phi,\, B^{s,q}_\Psi,\, F^{s,r}_\Phi: s\in \R,  q\in [1,\infty], r\in (1,\infty),\,\, \Phi, \Phi', \Psi \in \cL_{M} \bigr\}.
\]
(respectively, in addition, $\E\subset L^1$).

\item [(ii)] The problem \eqref{aee} has $L^p$-maximal regularity (respectively, is $\E$-well-posed) for some $p\in (1,\infty)$.
 
\item [(iii)] For every $k\in \ZZ$ the operator $b(k):=-k^2P + ik B + A + \hat c(k)$ is bijective and the sequences 
\[
(b(k)^{-1})_{k\in \ZZ}\subset \cL(X,Z), \quad (k B b(k)^{-1})_{k\in \ZZ}\subset \cL(X), \quad (k^2P b(k)^{-1})_{k\in \ZZ}\subset \cL(X) 
\]
are $\cR$-bounded (respectively, in addition,  $\left(kb(k)^{-1}\right)_{k\in \ZZ}\subset \cL(X)$ is $\cR$-bounded). 
\end{itemize}
\end{theorem}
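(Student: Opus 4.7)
The plan is to establish the cyclic chain (i) $\Rightarrow$ (ii) $\Rightarrow$ (iii) $\Rightarrow$ (i) in both parts (a) and (b), following closely the pattern of the proofs of Theorems \ref{charact of mr} and \ref{charact of wp}, but now with the range space $Z$ distinct from $X$. The implication (i) $\Rightarrow$ (ii) is immediate, since $B^{s,q}_{L^p}$ lies in each admissible class of $\E$.

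For (ii) $\Rightarrow$ (iii) I would first apply Lemma \ref{lem to mr}(ii) with test data $f = e_k\otimes x$ to obtain bijectivity of every $b(k)$, so $a(k):= b(k)^{-1}\in \cL(X,Y)$ for $k\neq 0$ and $a(0) \in \cL(X, D_A \cap Z)$. The closed graph theorem, applied exactly as in the proof of Theorem \ref{charact of mr}(a), then forces the maps $f\mapsto a(\Delta)f$, $f\mapsto \cB\partial a(\Delta)f$, $f\mapsto \partial\cP\partial a(\Delta)f$ to be bounded between the appropriate $B^{s,q}_{L^p}$-spaces, the first one now taking values in $B^{s,q}_{L^p}(Z)$. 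Testing on $f=e_k\otimes x$ and using the identity
\[
\|e_k\otimes x\|_{B^{s,q}_{L^p}(W)} = \Bigl(\sum_{j\geq 0} 2^{sjq}\psi_j(k)^q\Bigr)^{1/q}|x|_W,
\]
whose $k$-dependent prefactor is common to input and output and therefore cancels, delivers the claimed uniform bounds on $(b(k)^{-1})_k \subset \cL(X,Z)$, $(kB b(k)^{-1})_k \subset \cL(X)$ and $(k^2 P b(k)^{-1})_k \subset \cL(X)$. For the well-posedness variant the additional map $f\mapsto \partial a(\Delta)f$ is bounded, yielding boundedness of $(k b(k)^{-1})_k$. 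In part (b), the UMD assumption on $X$ together with \cite{ClPr01} (see also \cite{ArBu02}) upgrades every $L^p$-Fourier multiplier bound to an $\cR$-bound.

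For (iii) $\Rightarrow$ (i) the strategy is to feed the data of (iii) directly into Theorem \ref{mr thm}(a1) in case (a) and into Theorem \ref{mr thm}(b) in case (b); the additional boundedness (resp. $\cR$-boundedness) of $(kb(k)^{-1})_k$ is exactly what these theorems require to upgrade $\E$-maximal regularity to $\E$-well-posedness when $\E\subset L^1$. The sole non-trivial point is verifying the joint multiplier condition on $\hat c$. In case (a), for each $l\in\{0,1,2\}$ submultiplicativity of the operator norm gives
\[
\|k^l(\Delta^l \hat c)(k)\, a(k+l)\|_{\cL(X)} \leq \|k^l(\Delta^l \hat c)(k)\|_{\cL(Z,X)}\cdot\|a(k+l)\|_{\cL(X,Z)},
\]
so $\hat c\in \m^2(\ZZ;\cL(Z,X))$ together with $a\in l^\infty(\cL(X,Z))$ immediately yields $\hat c\in \m^2(a)$. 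In case (b) the same composition, combined with Kahane's contraction principle and the stability of $\cR$-boundedness under products (\cite{KuWe04}), gives $\hat c\in \m^1_\cR(a)$.

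The main obstacle is essentially bookkeeping: one must track that $a(k)$ now takes values in $\cL(X,Z)$ rather than in $\cL(X)$, while $a_0(k)$ and $a_1(k)$ still live in $\cL(X)$, and keep the compositions $\hat c(k) a(k+l)$ consistently in $\cL(X)$ across all four parallel statements (maximal regularity versus well-posedness, scalar versus $\cR$-bounded). Conceptually no new ingredient is needed beyond the tools already developed in Theorems \ref{mr thm}, \ref{charact of mr} and \ref{charact of wp}, together with the composition observation highlighted in Remark \ref{remark about joint conditions}.
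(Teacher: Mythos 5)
Your proposal is correct and takes essentially the same route the paper intends: Theorem \ref{charact with Z} is stated as the counterpart of Theorems \ref{charact of mr} and \ref{charact of wp} for $Z\hookrightarrow X$, to be proved by the same cycle (i)$\Rightarrow$(ii)$\Rightarrow$(iii)$\Rightarrow$(i) using Lemma \ref{lem to mr}(ii), the closed graph theorem tested on $e_k\otimes x$, and Theorem \ref{mr thm} fed with the joint condition $\hat c\in\m^\gamma(a)$. Your verification of that joint condition via submultiplicativity from $\hat c\in\m^2(\ZZ;\cL(Z,X))$ and $a\in l^\infty(\cL(X,Z))$ (and its $\cR$-bounded analogue via product stability) is precisely the first observation of Remark \ref{remark about joint conditions}, so no new ingredient is required.
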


\section{Particular forms of \eqref{aee}}\label{last}
In this section we specialize our general results from the previous section to particular forms of the abstract problem \eqref{aee}, which have been studied in the literature.
In particular, Theorems \ref{charact of mr} and \ref{charact of wp} extend many results from  a long series of articles, where such characterisations have been studied progressively; \cite{DaPrLu86, Pr93, ArBu02, ArBu04, KeLi04, Li06, BuFa08, LiPo08, Po09, BuFa09, KeLiPo09, HeLi12, LiPo13, FuLi14, BuCa17, BuCa18, BuCa19} and the references therein.

\subsection{Integro-differential equations}\label{with c}

 Here, we consider \eqref{aee} for special classes of convolutors $c$, which arise in the (abstract) reformulation of the integro-differential equations describing physical processes in materials with fading memory.
We start with equations, where a convolutor $c$ is given by the so-called {\it finite delay operators}, that is, $c$ is of the form   
\begin{equation}\label{delay op}
c\ast u := Hu_{\cdot} + Gu'_{\cdot}, 
\end{equation} where $H, G \in \cL( L^p(\T;X), X)$ for some $p\in (1,\infty)$, $u_{t}(s):=u(t+s)$ and, if, for instance, $u\in W^{1,p}(\T;X)$,  $u'_{t}(s) = u'(t+s)$ ($s\in [0,2\pi]$).   
Note that if for all $k\in \ZZ$ and $x\in X$ we set
\[
H_kx:=H(e_k\otimes x) \quad \textrm{ and }\quad G_kx := G(e_k\otimes x),
\]
then $\widehat{Hu_{\cdot}}(k) = H_k\hat{u}(k)$ and $\widehat{Gu_{\cdot}}(k) = G_k\hat{u}(k)$ for all $u\in L^p(X)$. Consequently, the sequences 
$h:=(H_k)_{k\in \ZZ}$ and $g:=(G_k)_{k\in \ZZ}$ are in $\cM_p(\ZZ; X,X)$. It shows that one can extend the meaning of \eqref{delay op} to all $u\in \cD'(X)$ putting 
\[
c\ast u = h(\Delta) u + \partial g(\Delta) u = h(\Delta) u +  g(\Delta)(\partial u),   
\] that is,  $\hat{c}(k) = h(k) + ikg(k)$, $k\in \ZZ$. 

This abstract reformulation of \eqref{delay op} leads to the following characterisation of the maximal regularity and well-posedness for this class of equations.

\begin{corollary}\label{mr thm delay op}
Let $A$, $B$ and $P$ be closed, linear operators on a Banach space $X$. 
Let $(h(k))_{k\in \ZZ}$, $(g(k))_{k\in \ZZ}\subset \cL(X)$. 

\emph{(1)} Assume that $(\cdot)^{-\alpha}h\in \m^{2}(\ZZ;\cL(X))$ and $(\cdot)^{1-\alpha}g\in \m^{2}(\ZZ;\cL(X))$ for some $\alpha\in \{0,1,2\}$. Let $\hat c := h + i(\cdot) g$. 
Then, the following statements are true.
\begin{itemize}
\item [(i)] If $\alpha = 0$, then the equivalence $(i)\Leftrightarrow(ii)\Leftrightarrow (iii)$ of Theorem \ref{charact of mr}(a) holds for \eqref{aee} with $\hat c$.
\item [(ii)] If $\alpha = 1$, then the conclusion of Theorem \ref{charact of wp}(a) holds, and if, in addition, $B=I$, then the conclusion of Theorem \ref{charact of mr}(a) holds too. 
\item [(iii)] If $\alpha = 2$ and, in addition, $P=I$, then the conclusion of Theorem \ref{charact of mr}(a) and Theorem \ref{charact of wp}(a) hold.
\end{itemize}

\emph{(2)} Let $X$ have the $U\!M\!D$ property. Assume that $(\cdot)^{-\alpha}h\in \m_\cR^{1}(\ZZ;\cL(X))$ and $(\cdot)^{1-\alpha}g\in \m^{1}_\cR(\ZZ;\cL(X))$ for some $\alpha\in \{0,1,2\}$. Then the analogous statements to those of the point (1) hold, that is, the statements which are obtained by replacing the assertion (a) of Theorems \ref{charact of mr} and \ref{charact of wp} with the corresponding (b) therein.
\end{corollary}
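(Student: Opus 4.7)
The plan is to reduce Corollary \ref{mr thm delay op} to direct applications of Theorems \ref{charact of mr} and \ref{charact of wp}, together with the relaxed-hypothesis observations of Remarks \ref{rem on charact mr} and \ref{rem on charact of wp}. Essentially no new analysis is needed: one only has to verify that the appropriate (rescaled) Marcinkiewicz condition on $\hat c = h + i(\cdot)g$ follows from the assumptions on $h$ and $g$.

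The key algebraic identity is
\[
(\cdot)^{-\alpha}\hat c(k) \;=\; k^{-\alpha}h(k) \,+\, i\,k^{1-\alpha}g(k) \qquad (k\in\ZZ\setminus\{0\}),
\]
together with the fact that $\m^2(\ZZ;\cL(X))$ is a linear space (closed under sums, since the finite-difference operator is linear and the supremum of two bounded scaled quantities is bounded). Hence the two assumptions $(\cdot)^{-\alpha}h\in\m^2$ and $(\cdot)^{1-\alpha}g\in\m^2$ immediately give $(\cdot)^{-\alpha}\hat c\in\m^2(\ZZ;\cL(X))$ for each $\alpha\in\{0,1,2\}$.

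With this observation in hand I will match each case to the appropriate characterisation. For $\alpha=0$ the condition $\hat c\in\m^2(\ZZ;\cL(X))$ is exactly the hypothesis of Theorem \ref{charact of mr}(a), yielding $(i)$. For $\alpha=1$ the condition $(\cdot)^{-1}\hat c\in\m^2$ is precisely the hypothesis of Theorem \ref{charact of wp}(a), giving the well-posedness equivalence; if in addition $B=I$, then Remark \ref{rem on charact mr}(a) permits the same relaxed hypothesis for maximal regularity, so Theorem \ref{charact of mr}(a) applies as well, proving $(ii)$. For $\alpha=2$ with $P=I$, both Remark \ref{rem on charact mr}(a) and Remark \ref{rem on charact of wp} allow the further relaxation to $(\cdot)^{-2}\hat c\in\m^2$, and hence the conclusions of both Theorem \ref{charact of mr}(a) and Theorem \ref{charact of wp}(a) follow, proving $(iii)$.

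Part \emph{(2)} is handled by the identical scheme, invoking instead the $(b)$-parts of Theorems \ref{charact of mr} and \ref{charact of wp} and the $\cR$-bounded versions of the above remarks. The only point worth mentioning is that $\m^1_\cR(\ZZ;\cL(X))$ is likewise closed under finite sums, which is immediate from the standard closure of $\cR$-boundedness under sums of sets (or from Kahane's contraction principle). Consequently there is no deep obstacle in the proof; it is essentially a matter of bookkeeping which rescaling of $\hat c$ meets which Marcinkiewicz hypothesis, and which structural assumption ($B=I$ or $P=I$) triggers the relaxation recorded in the remarks.
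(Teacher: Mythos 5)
Your proposal is correct and is exactly the intended derivation: the paper gives no separate proof of this corollary, leaving it as an immediate consequence of the identity $k^{-\alpha}\hat c(k)=k^{-\alpha}h(k)+ik^{1-\alpha}g(k)$, the closedness of $\m^2$ (resp.\ $\m^1_\cR$) under sums, and Theorems \ref{charact of mr} and \ref{charact of wp} together with the relaxations recorded in Remarks \ref{rem on charact mr} and \ref{rem on charact of wp}. No gaps.
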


\begin{remark}
(a) One can readily seen, that Corollary \ref{mr thm delay op} extends several related results from the literature, where the convolutors $c$ are given by finite delay operators; see, e.g. \cite{BuCa19}, \cite{BuCa18} and also references therein. 

(b) Moreover, Theorem \ref{mr thm delay op}(2) covers \cite[Theorem 3.4]{FuLi14}. Indeed, recall that Fu and Li in \cite{FuLi14} studied the problem \eqref{aee} with $P=I$ and the convolutor $c$ given by the {\it infinite} delay operators, that is, 
\[
c\ast u = H u_\cdot + G u'_\cdot,
\] where $H, G:\mathscr{B}  \rightarrow X$ are bounded, linear operators, $\mathscr{B}$ is a space of $X$-valued functions on $\R_-:=(-\infty, 0]$, which is axiomatically defined in \cite{FuLi14}, and $u_\cdot$ is given as before by $u_t(s):= u(t+s)$ $(s\in \R_-)$. Since $e_k$, $k\in \ZZ$, considered as functions on $\R_-$, belong to $\cC_b(\R_-)\subset \mathscr{B}$, one can define sequences $h$ and $g$ similarly as above, i.e. $h(k) x := H(e_k(\cdot)x)$ and $g(k)x :=(e_k(\cdot)x)$, $k\in \ZZ$. Then, $c \in \cD'(X)$ with $\hat c(k) = h(k) + ik g(k)$, $k\in \ZZ$.  
The axioms of $\mathscr B$ easily give that $h$ and $g$ are $\cR$-bounded in $\cL(X)$; see e.g. the proof of \cite[Lemma 3.2]{FuLi14}. 
Finally, one can easily check that the condition $(ii)$ of \cite[Theorem 3.4]{FuLi14} implies the assumptions of Corollary \ref{mr thm delay op}(2) for $\alpha = 2$ (with $P=I$). 

Analogously, one can check that Theorem \ref{mr thm delay op}(a) extends \cite[Theorems 4.4~and~4.7]{FuLi14}; see also references therein, as well corresponding results in \cite{KeLi05, BuFa08, BuFa09, LiPo08, HeLi12}.
\end{remark}

To illustrate Theorem \ref{charact with Z}, we remark on the integro-differential equations with infinite delays, which have been studied in \cite{KeLi04}. Namely, in the problem studied in \cite{KeLi04} (i.e., \eqref{aee} with $P=0$ and $B=I$) the infinite delay operator is given by 
\[
c\ast u: = d(\cdot) A \ast u, 
\]
where $d\in L^1_{loc}(\R_+)\subset L^1_{loc}(\R)$ is such that the Fourier transform of $d$ exists at points $k\in \ZZ$ and the sequence $(\cF d(k))_{k\in \ZZ}$ satisfies some further assumptions corresponding to a {\it $\gamma$-regularity} ($\gamma = 1,2)$; see \cite[Theorems 2.12 and 3.9]{KeLi04}). It is readily seen that, these assumptions imply $\hat c \in \m^\gamma_\cR (\ZZ; \cL(Z,X))$ with $Z:=D_A$ ($\gamma = 1,2$). Therefore, Theorem \ref{charact with Z} might also be seen as an extension of those results.

\subsection{Differential equations}\label{diff equ}  

We conclude with some remarks on \eqref{aee} with $c \equiv 0$, that is, 
\begin{equation}\label{aee no c}
\partial \cP \partial u + \cB \partial u + \cA u = f \qquad (\textrm{in } \cD'(X)).
\end{equation} 
The particular form of \eqref{aee no c} with $\cP = 0$ and $\cB = I$  was, in a sense, a prototype for further periodic extensions done in the literature; see Arendt and Bu \cite[Theorem 2.3 and Corollary 2.4]{ArBu02} for the corresponding result on the $L^p$-well-posedness and its $p$-{\it{independence}}. For the studies of the other forms of \eqref{aee no c} in connection to their well-posedness in the context of the classical Lebesgue-Bochner, Besov, and Triebel-Lizorkin spaces (i.e., corresponding to $\Phi=L^p$) see for instance \cite[Section 2]{ArBu02}, \cite{ArBu04, LiPo11, Bu13}, as well as the references provided for the integro-differential equations.  

Theorem \ref{charact of wp} (for $c\equiv 0$), in particular, gives an extension of those results and also provides periodic variant of extrapolation results known in the euclidean setting; see, e.g. \cite{PrSi07, AuAx11, ChFi14, ChKr18, FaHyLi20, Kr22} and references therein. We  leave the formulation of this result to the interested reader. 
Finally, in articles mentioned in this section the reader can find several concrete models of equations arising in the applied mathematics. Such equations with periodic boundary conditions have been studied therein in the context of the well-posedness with respect to the classical Lebesgue-Bochner $L^p$, Besov $B^{s,q}_p$, and Triebel-Lizorkin $F^{s,q}_p$ spaces. The same equations could serve as an illustration of our abstract results, Theorems \ref{charact of mr}, \ref{charact of wp}, and \ref{charact with Z}.\\
 
\noindent\textbf{Declaration of interest:} none.\\

\noindent\textbf{Data availability:} no data was used for the research described in the article.

\providecommand{\href}[2]{#2}

\end{document}